\documentclass[11pt,twoside, a4paper]{article}
\usepackage{latexsym}
\usepackage[english]{babel}
\usepackage{t1enc}
\usepackage{mathrsfs}
\usepackage{ifthen}
\usepackage{url}
\usepackage{epsfig}
\usepackage{amsbsy}
\usepackage{extarrows}
\usepackage{amsfonts}
\usepackage{amsmath}
\usepackage{amssymb}
\usepackage{amsthm}
\usepackage{amsxtra}
\usepackage{bbm}
\usepackage{color}
\usepackage{relsize} 
\usepackage{enumitem}
\usepackage{graphicx}
\usepackage{caption}
\usepackage{subcaption}
\usepackage{placeins}
\usepackage{wrapfig}
\usepackage{float}

\usepackage[margin=2.5cm]{geometry}

\usepackage{verbatim}

\numberwithin{equation}{section}
\numberwithin{figure}{section}

\theoremstyle{plain} 
\newtheorem{theorem}{Theorem}[section]
\newtheorem*{theorem*}{Theorem}
\newtheorem{lemma}[theorem]{Lemma}
\newtheorem*{lemma*}{Lemma}

\newtheorem*{corollary*}{Corollary}
\newtheorem{proposition}[theorem]{Proposition}
\newtheorem*{proposition*}{Proposition}

\newtheorem*{definition*}{Definition}

\newtheorem*{example*}{Example}

\newtheorem*{remark*}{Remark}
\newtheorem*{remarks*}{Remarks}

\theoremstyle{definition}
\newtheorem{definition}[theorem]{Definition}

\theoremstyle{remark}
\newtheorem{remark}[theorem]{Remark}

\definecolor{olivegreen}{rgb}{0,0.6,0.1}

\newcommand{\R} {\mathbb{R}}
\newcommand{\C} {{\mathbb{C}}}
\newcommand{\N} {{\mathbb{N}}}
\newcommand{\Cp} {\mathbb {C}_+}

\newcommand{\ii}{\mathrm{i}} 
\newcommand{\dd}{\mathrm{d}}

\newcommand{\avgB}[1]{\Big\langle #1 \Big\rangle}

\DeclareMathOperator{\re}{Re}
\DeclareMathOperator{\im}{Im}
\DeclareMathOperator{\supp}{supp}
\DeclareMathOperator{\dist}{dist}
\DeclareMathOperator{\diag}{diag}
\DeclareMathOperator{\spn}{Span}

\DeclareMathOperator{\Ran}{Ran}
\DeclareMathOperator{\sign}{sign}
\DeclareMathOperator{\Spec}{Spec}

\usepackage{hyperref}

\begin{document}
\renewcommand{\thefootnote}{\fnsymbol{footnote}}
\title{Non-symmetric Vector Dyson Equations}
\author{
Jiaoyang Huang\thanks{Department of Statistics and Data Science, University of Pennsylvania. Email: \texttt{huangjy@wharton.upenn.edu}.}
\and
Zhonggen Su\thanks{School of Mathematical Sciences, Zhejiang University. Email: \texttt{suzhonggen@zju.edu.cn}.}
\and
Ruizhe Xu\thanks{School of Mathematical Sciences, Zhejiang University; Department of Mathematics, The University of Hong Kong. Email: \texttt{ruizhexu.math@gmail.com}.}
}

\date{} 

\maketitle
\thispagestyle{empty} 

\maketitle
\thispagestyle{empty} 
\vspace{-0.5cm}

\begin{abstract}
    We study the vector Dyson equation
    \begin{align*}
        -\frac1{m(z)}=z\mathbf1+\mathbf a+Sm(z),
    \end{align*}
    with parameter $z$ in the complex upper half-plane $\Cp$, where
    \(\mathbf a\in\mathbb R^d\) and \(S\) is a nonnegative matrix, not necessarily
    symmetric. This equation has a unique vector solution \(m(z)\in\Cp^d\), for which we establish a complete measure decomposition and prove regularity.
    We then develop a graph-theoretic approach to the singularity and stability problem for non-symmetric matrices \(S\). The graph structure of \(S\) identifies the
    possible degeneracies of the stability operator as $z$ approaches the real axis. In particular, for non-backtracking matrices, we prove
    square-root growth at regular edges, cubic-root growth at regular cusps, and
    complete stability estimates. We also obtain the corresponding estimates for
    symmetric matrices in the periodic setting.
\end{abstract}

\section{Introduction}
\subsection{Motivation}\label{sec.motivation}
The \emph{Dyson equation}, in its finite-dimensional vector form often called the
\emph{quadratic vector equation}, is the nonlinear system
\begin{equation*}
    -\frac{1}{m_i(z)}
    =
    z-a_i+\sum_{j=1}^d s_{ij}m_j(z),
    \qquad i=1,\dots,d,
\end{equation*}
where \(z\in \mathbb C_+\) and the solution
\(m(z)=(m_1(z),\dots,m_d(z))\) takes values in
\(\mathbb C_+^d\), with
$
    \mathbb C_+ := \{z\in \mathbb C:\operatorname{Im} z>0\}$.
Here \(\mathbf a=(a_i)_{i=1}^d\in \mathbb R^d\), and
\(S=(s_{ij})_{i,j=1}^d\) is an entrywise nonnegative matrix. Equivalently, we write
\begin{equation}\label{eq:QVE}
    -\frac1{m(z)}=z\mathbf 1-\mathbf a+Sm(z),
\end{equation}
where \(1/m(z)\) is understood componentwise.

A fundamental source of the Dyson equation is the theory of
\emph{Wigner-type random matrices}. A classical Wigner matrix is an
\(N\times N\) Hermitian or real symmetric random matrix
$
    H=(h_{ij})_{i,j=1}^N,
$
whose upper-triangular entries are independent, centered random variables,
up to the symmetry constraint, with variances of order \(1/N\). In the
simplest normalization,
$    \mathbb E [h_{ij}]=0$
and
    $\mathbb E |h_{ij}|^2=1/N$
   for $i\neq j$,
together with suitable assumptions on the diagonal entries. In a
Wigner-type matrix, the variances are allowed to depend on the indices:
  $  \mathbb E |h_{ij}|^2=s_{ij}.$
The matrix \(S=(s_{ij})_{i,j=1}^N\) is then called the variance profile of
the ensemble; it is typically symmetric, entrywise nonnegative, and has
row sums of order one. In the homogeneous case, the Dyson equation reduces
to the scalar self-consistent equation for the Stieltjes transform of
Wigner's semicircle law. Thus the general symmetric case may be viewed as
an inhomogeneous extension of the semicircle law.

More broadly, vector Dyson equations, or closely related self-consistent
canonical equations, have appeared in various forms in the study of
Wigner-type matrices, correlated random matrices, and stochastic canonical
equations
\cite{Ber1973,Wegner1979,Girko2001,KhorunzhyPastur1994,
Shlyakhtenko1996,Guionnet2002,AndersonZeitouni2005,AndersonZeitouni2008,hachem2006empirical}.
Their solutions encode the limiting spectral density of states through
the boundary values of a family of Stieltjes transforms.

Beyond the macroscopic density of states, the Dyson equation also plays a
central role in the modern theory of \emph{local laws} and
\emph{universality} for random matrices. In these applications, one needs
much more than a description of the limiting density: it is necessary to
control the solution quantitatively down to very small spectral scales and
to understand its stability under perturbations. A key object is the
linearized stability operator associated with the Dyson equation. Bounds
on the inverse of this operator quantify the sensitivity of the solution
to perturbations and form a fundamental deterministic input in the proof
of local laws and universality.

The qualitative behavior of the corresponding densities, including the
possible occurrence of square-root edges and cubic-root cusps, was
classified in \cite{AEK2017singularities}. Quantitative refinements and
the associated stability theory, which are key inputs for local laws and
universality, were developed in \cite{AEK2019quadratic}; see also
\cite{AEK2016universality,AEK2016correlated,alt2017local,erdHos2020cusp,cipolloni2019cusp,alt2020correlated}.

In the examples discussed above, the matrix \(S\) in the vector Dyson
equation is symmetric. A second natural source of Dyson equations, and the
main motivation for the present work, comes from \emph{Jacobi-type
operators} on infinite trees. These models naturally lead to
non-symmetric matrices \(S\), and therefore to a class of vector Dyson
equations whose stability theory is not covered by the symmetric setting.

Let \(\mathcal G\) be a locally finite graph with vertex set \(\mathbb V_{\mathcal G}\).
A Jacobi-type operator on \(\mathcal G\) acts on functions \(f\) on
\(\mathbb V_{\mathcal G}\) by
\begin{equation}\label{eq.jacobi-operator}
    (Hf)(x)=a_x f(x)+\sum_{y\sim x} b_{xy} f(y),
    \qquad x\in \mathbb V_{\mathcal G},
\end{equation}
where \(y\sim x\) denotes adjacency, \(a_x\in \mathbb R\) is a vertex
potential, and the edge weights satisfy \(b_{xy}\in \mathbb R\) and
\(b_{xy}=b_{yx}\). Weighted adjacency operators, graph Laplacians, and
discrete Schrödinger operators are all special cases of this form. Thus
Jacobi-type operators provide a natural graph-theoretic analogue of
differential operators and play a central role in the spectral theory of
discrete structures.

A particularly important class arises from \emph{universal covers} of
finite graphs. Given a connected finite graph \(\mathcal G\), its universal
cover is an infinite tree \(\mathcal T\), together with a covering map
\(\pi:\mathcal T\to\mathcal G\) that locally preserves the graph structure.
Intuitively, \(\mathcal T\) is obtained by unfolding all cycles of
\(\mathcal G\): every local neighborhood in \(\mathcal T\) is identified
with the corresponding neighborhood in \(\mathcal G\), while globally
\(\mathcal T\) has no cycles. Any local operator on \(\mathcal G\) can be
lifted through \(\pi\) to an operator on \(\mathcal T\). Since the
coefficients are inherited from the finite graph, the lifted operator is
\emph{periodic} in a natural sense and is determined by finitely many parameters.
This perspective includes periodic Jacobi matrices on trees, which were
studied systematically in \cite{AvniBreuerSimon2020}. More recently,
periodic operators on universal covering trees, obtained by lifting local
operators from finite graphs, were investigated in
\cite{BanksGarzaVargasMukherjee2022}, where the point spectrum was
characterized in terms of finite-graph data. More generally, this setting
is closely related to \emph{trees of finite cone type}, also known as
substitution trees, in which an infinite tree is generated from finite
combinatorial data and the associated operators are again described by
finitely many parameters; see
\cite{KellerLenzWarzel2013,KellerLenzWarzel2014,random2002nagnibeda,Sadel2013} and the
references therein.

Although the lifted Jacobi operator on the tree is self-adjoint, its Green
function is most naturally described through recursion relations on
directed edges. Let \(\vec E_{\mathcal G}\) denote the set of oriented
edges of the base graph,
\[
    \vec E_{\mathcal G}
    :=
    \{(i,j): i\sim j \text{ in } \mathcal G\},
\]
so that each undirected edge is represented by its two possible
orientations. For \(e\in \vec E_{\mathcal G}\), write \(t(e)\) and
\(h(e)\) for its tail and head, and let \(\bar e\) denote the reversed
edge. The non-backtracking matrix \( B\) of \(\mathcal G\) is
defined by
\[
   B_{e,f}
    =
    \begin{cases}
    1, & \text{if } h(e)=t(f) \text{ and } f\neq \bar e,\\
    0, & \text{otherwise}.
    \end{cases}
\]
Thus \( B\) records admissible transitions between directed edges,
while forbidding immediate reversal.

Now consider a self-adjoint periodic Jacobi operator of the form
\eqref{eq.jacobi-operator} on the universal cover \(\mathcal T\). The
diagonal Green functions on forward branches satisfy a closed system of
recursion relations. More precisely, for a directed edge \(e=(i,j)\), let \(m_{ij}(z)\) denote
the diagonal Green function at the root \(j\) of the forward subtree
reached by traversing \(e\), with the branch leading back to \(i\) removed.
By the Schur complement formula on trees, these quantities satisfy
\begin{equation}\label{eq.QVE-NB}
    -\frac{1}{m_{ij}(z)}
    =
    z-a_j
    +
    \sum_{\substack{k\sim j\\ k\ne i}}
    b_{jk}^2 m_{jk}(z).
\end{equation}
Equivalently, if one indexes the vector \(m\) by directed edges, then
\eqref{eq.QVE-NB} has the Dyson-equation form
\[
    -\frac{1}{m_e}
    =
    z-a_{h(e)}
    +
    \sum_{f\in \vec E_{\mathcal G}} S_{e f} m_f,
\]
where
$
    S_{e f}
    =
    b_{t(f)h(f)}^2\, \mathbf 1_{\{h(e)=t(f),\, f\neq \bar e\}}$.
Thus the matrix \(S\) is a weighted non-backtracking matrix on directed
edges. In general it is non-symmetric, even though the original Jacobi
operator \eqref{eq.jacobi-operator} is self-adjoint. The non-symmetry arises because the forward
Green function recursion excludes immediate backtracking.

Vector Dyson equations also appear in the study of quantum ergodicity on
large graphs. In this context, self-consistent equations for Green
functions on limiting tree-like structures are used to analyze spatial
delocalization and quantum ergodicity of graph eigenfunctions; see
\cite{AnantharamanSabri2019}.

A further important extension is the matrix-valued analogue, known as the
\emph{matrix Dyson equation}. This equation arises naturally in random
matrix models with correlated entries and has become a central
deterministic object in the proof of local laws and spectral universality.
We refer to
\cite{Erdos2019MDE,ajanki2019stability,erdHos2019random} for overviews of
the matrix Dyson equation and its applications to random matrices. Related
operator-valued and noncommutative versions of Dyson equations have been
studied in \cite{HeltonRashidiFarSpeicher2007,Mai2022Dyson}, where
positivity constraints, operator-valued free probability, and regularity
properties of the associated densities of states play a central role.

\subsection{Challenges, New Ideas and Contributions}

The central object in our analysis is the linearized stability matrix
\begin{equation}\label{eq.stability-matrix}
    I-m(z)^2S
    :=
    I-\operatorname{diag}(m_1(z)^2,\dots,m_d(z)^2)S .
\end{equation}
It arises naturally when studying perturbations of the Dyson equation.
Suppose that \(g(z)\) solves the perturbed equation
\[
    -\frac1{g(z)}
    =
    z\mathbf 1-\mathbf a+Sg(z)+d(z),
\]
where \(d(z)\) is a small error term, and let
\(h(z):=g(z)-m(z)\), with \(m(z)\) denoting the solution of the
unperturbed equation \eqref{eq:QVE}. Comparing the two equations and
linearizing in \(h(z)\), one obtains, to leading order,
\[
    \bigl(I-m(z)^2S\bigr)h(z)
    \approx
    m(z)^2 d(z),
\]
where \(m(z)^2d(z):=(m_i(z)^2d_i(z))_{i=1}^d\).
Thus the response of the solution to perturbations is governed by the
inverse of \(I-m(z)^2S\). Bounds on \((I-m(z)^2S)^{-1}\) ensure that
small errors in the equation produce only controlled changes in the
solution. Conversely, when \(I-m(z)^2S\) is nearly singular, small
errors may be strongly amplified. This loss of stability is precisely what
occurs near edges and cusps of the limiting density.

To analyze the stability operator, we use the phase--modulus
decomposition introduced in
\cite{AEK2017singularities,AEK2019quadratic}.
Define
\begin{equation}\label{def.B}
    R(z):=U(z)-F(z), \qquad\mbox{where }
    U(z):=\diag\left(\frac{|m_1(z)|^2}{m_1(z)^2},\dots,\frac{|m_d(z)|^2}{m_d(z)^2}\right)
\end{equation}
and
\begin{equation}\label{def.F}
    F(z):=\diag(|m_1(z)|,\dots,|m_d(z)|)~S \diag(|m_1(z)|,\dots,|m_d(z)|).
\end{equation}
The matrix \(R(z)\) is equivalent to the original linearization up to
multiplication by diagonal matrices:
\begin{equation}\label{eq.I-m2S-transform}
    I-m(z)^2S
    =
    \diag\left(\frac{m_1(z)^2}{|m_1(z)|},\dots,\frac{m_d(z)^2}{|m_d(z)|}\right)
    R(z)
    \diag\left(\frac{1}{|m_1(z)|},\cdots,\frac{1}{|m_d(z)|}\right).
\end{equation}

Although the stability estimates proved in this paper have the same
general form as their analogues in the symmetric setting, the underlying
arguments are substantially different. Several elementary structural
features used in the symmetric theory fail once \(S\) is allowed to be
non-symmetric.

The first difficulty concerns the nonnegative matrix $F(z)$.
This matrix plays a central role in the analysis of \(R(z)\) and hence of
the linearized Dyson equation. When \(S\) is symmetric, \(F(z)\) is
symmetric as well, and \cite[Lemma 4.6]{AEK2019quadratic} gives the
estimate
\[
    \|F(z)\|_2\le 1,\qquad z\in\mathbb C_+ .
\]
This estimate implies, in particular, that the solution \(m(z)\) can
become large only when \(z\) approaches one of the components of
\(\mathbf a\), thereby ruling out blow-up at unrelated spectral locations;
see \cite[Lemma 4.5]{AEK2019quadratic}. The symmetry of \(F\) also yields
singular-gap estimates \cite[Lemma 5.5 and Lemma 5.6]{AEK2017singularities},
which are essential inputs in the stability analysis of \(R\).

These inputs are no longer available in the non-symmetric setting. Indeed,
although \(\|U(z)\|_2=1\) by construction, the matrix \(F(z)\) is no longer
self-adjoint and its largest singular value may exceed one. Thus the basic
comparison between \(U\) and \(F\) underlying the symmetric argument breaks
down, and the singular-gap estimate is lost. Consequently, the key lemma
\cite[Lemma 5.8]{AEK2017singularities} used to deduce stability in the
symmetric case is no longer applicable. The same obstruction also appears
in the cusp analysis: without additional structure, the coefficient of the
cubic term in the effective scalar equation may degenerate, and the
resulting cubic equation need not have the stability properties familiar
from the symmetric theory.

At a more basic level, one should not expect a satisfactory stability
theory for an arbitrary nonnegative matrix \(S\). For example, if \(S\) is
a cyclic permutation matrix, the Dyson equation still admits the
semicircle solution and hence has a strictly positive bulk density, but
the stability matrix \(I-m(z)^2S\) becomes unstable at a bulk point; see
Appendix~\ref{app.Counterexamples} for details. This example shows that
bulk stability is not a consequence of nonnegativity and irreducibility
alone.

Motivated by this obstruction, our stability analysis focuses on two
structurally natural classes of matrices: symmetric matrices and
non-backtracking matrices. As explained in
Subsection~\ref{sec.motivation}, the symmetric case arises from
Wigner-type random matrices, while the non-backtracking case originates
from periodic Jacobi operators on universal covers. These two classes
provide the principal motivating examples for the present work.

Our first new ingredient is a preliminary structural analysis of the
solution itself; we refer to Theorem~\ref{thm.decompose of measure} for details. 
Since the non-symmetric equation does not reveal the
possible blow-up locations as explicitly as in the symmetric case, we use
algebraic and Puiseux-type arguments, detailed in Lemma \ref{lem:puiseux}, to study the boundary behavior of
\(m\) near the real axis. This analysis yields a finite exceptional set
and a decomposition of the representing measures into absolutely
continuous parts and finitely many atoms. Thus, even though the possible
blow-up locations cannot be read off directly from the equation, they are
shown to form a finite set. The same argument also shows that the regular
support of the density is a finite union of intervals. This provides the
global qualitative framework needed for the subsequent stability analysis.

The second new ingredient is a replacement for the missing Hilbert-space
estimate \(\|F\|_2\le 1\). Instead of relying on singular values of \(F\),
we use the left and right Perron--Frobenius eigenvectors to normalize
\(F\) into a Markov transition matrix. The
spectral analysis of \(I-F\) is then reduced to a mixing-space estimate for this
Markov operator on the complement of the Perron direction. This allows us
to recover the necessary gap information even though \(F\) is
non-self-adjoint and may have large singular norm. We refer to Lemma \ref{lem:Q-second-singular-gap-lambda} and Proposition \ref{pro.I-F-second} for details.

The third new ingredient is a graph-theoretic analysis of the phase matrix
\(U\). The possible small singular values of
 $   R=U-F$
are governed by resonances of the directed support graph of \(S\). We
introduce a resonance set, detailed in Definition \ref{def.resonance set}, that characterizes the phase configurations for
which the stability matrix may become singular. This perspective replaces
the direct norm comparison used in the symmetric theory and identifies the
graph structures responsible for possible bulk instabilities or
degeneracies of the cusp coefficient. In this way, the stability problem
is reduced to a structural question about the support graph of \(S\).
We refer to Lemma \ref{lem:UminusQ-graph-resonance}, Proposition \ref{prop.bulk-stability-symmetric} and Proposition \ref{pro.bulk-stability-NB} for details.

The resonance method is flexible enough to treat both classes considered
in this paper. Although our main motivation comes from the
non-backtracking setting, we also revisit the symmetric case. This serves
two purposes. First, it shows that the graph-theoretic resonance approach
is not specific to non-backtracking matrices, but provides a general
mechanism for stability analysis. Second, it allows us to remove the
primitivity assumption often imposed in the symmetric theory, thereby
yielding a complete characterization of stability in the symmetric
setting.

\subsection{Preliminaries}\label{sec.preliminary}
In this subsection, we collect the notation, conventions, and auxiliary results used throughout the paper.

\paragraph{Notation} We denote by $\R^d$ and $\C^d$ the spaces of real and complex valued vectors of dimension $d$, respectively, and by $\R^{d\times d}$ and $\C^{d\times d}$ the corresponding spaces of real and complex matrices. 
We write $|\cdot|_2$ for the Euclidean norm on $\R^d$ or $\C^d$, and $\|\cdot\|_2$ for the operator norm. 
Furthermore, $\langle\cdot,\cdot\rangle$ denotes the standard complex inner product on $\mathbb C^d$, namely
$
\langle x,y\rangle:=\sum_{i=1}^d \overline{x_i}\,y_i .
$
For a vector $u\in\mathbb C^d$, we use the shorthand notation
\[
\langle u\rangle:=\langle \mathbf 1,u\rangle=\sum_{i=1}^d u_i,
\qquad \mathbf 1:=(1,\dots,1).
\]
In particular, if $x,y\in\mathbb R^d$, then
$
\langle x,y\rangle=\langle xy\rangle,
$
where $xy$ denotes the componentwise product.
We denote by
$
    \mathbb S^1:=\{z\in\mathbb C:\ |z|=1\}
$
the unit circle in the complex plane.
For \(\tau\in\mathbb R\) and \(r>0\), we denote by
\[
    N_r(\tau):=\{z\in\overline{\mathbb C_+}: |z-\tau|\le r\}
\]
the closed upper half-disc of radius \(r\) centered at \(\tau\). 
Actually, all neighborhoods and domains considered in this paper are understood as subsets of \(\overline{\mathbb C_+}\).
We write \(\eta:=\im z\) for the imaginary part of \(z\).

We denote the spectral radius of a matrix by $\rho(\cdot)$. The smallest singular value with respect to the Euclidean norm \(|\cdot|_2\) is denoted by  \(s_{\rm min}(\cdot)\). For a square matrix or linear operator, $\Spec(\cdot)$ denotes the set of all eigenvalues and $\Ran(\cdot)$ denotes its image.

 We write $\spn\{v_1,\dots,v_k\}$ for the linear span of the vectors $v_1,\dots,v_k$ over the underlying scalar field. 
For a vector $v=(v_1,\dots,v_d)\in\mathbb C^d$, we denote by  $\supp v := \{\, i\in\{1,\dots,d\}: v_i\ne 0\,\}$ the support of $v$.
For a nonnegative matrix $A\in\R^{d\times d}$, we define its support by
\begin{equation}\label{def.E}
  E_A:=\{(i,j)\in\{1,\dots,d\}^2:\ A_{ij}>0\}.
\end{equation}
A directed cycle \(\mathscr{C}\) of \(E_A\) means a sequence
\begin{equation}\label{def.cycle}
  \mathscr{C}:=i_0\to i_1\to \cdots \to i_{\ell-1}\to i_\ell=i_0
\end{equation}
such that \((i_j,i_{j+1})\in E_A\) for all \(j=0,\dots,\ell-1\).
We associate to each nonnegative matrix $A\in\mathbb R^{d\times d}$ its support graph $\mathcal{G}_A$, defined as the directed graph on $\{1,\dots,d\}$ with an edge $i\to j$ if and only if $A_{ij}>0$. We shall freely pass between the matrix $A$ and its support graph when referring to graph-theoretic notions such as irreducibility, periods, and cycles. For an undirected graph \(\mathcal G\), the notation \(x\sim y\) means that \(x\) and \(y\) are adjacent vertices in \(\mathcal G\).

The symbol $\sqcup$ denotes a disjoint union; unlike the usual union symbol $\cup$, it emphasizes that the sets being joined are pairwise disjoint.

\paragraph{Conventions}
Products, quotients, and powers of vectors are understood componentwise. Thus, for vectors $u,v\in\mathbb C^d$ with nonzero entries of $v$,
\[
uv=(u_1v_1,\dots, u_dv_d),\qquad 
\frac{u}{v}=\left(\frac{u_1}{v_1},\dots,\frac{u_d}{v_d}\right).
\]
For matrices, quotients are interpreted by multiplication with the inverse: 
$
A/B:=AB^{-1},
$
whenever $B$ is invertible. 
Similarly, scalar functions applied to vectors are understood componentwise. Thus,
for \(v\in\mathbb C^d\), quantities such as
$
  \re v,\im v
$
and $|v|$
are defined by applying the corresponding scalar operation to each
component. For example,
\[
  \re v=(\re v_1,\dots,\re v_d),\quad
  \im v=(\im v_1,\dots,\im v_d),\quad |v|=(|v_1|,\dots,|v_d|).
\]
The componentwise modulus \(|v|\) should not be confused with $|v|_2$ which denotes the Euclidean norm of \(v\).
Inequalities between vectors or matrices are also understood componentwise. In particular, for a vector $v=(v_i)_{i=1}^d\in\R^d$ or a matrix $A=(a_{ij})_{i,j=1}^d\in\R^{d\times d}$, the inequalities $v\ge0$ and $A\ge 0$ mean $v_i\ge0$ for all $i$ and  $a_{ij}\ge 0$ for all $i,j$. The notation $v>0$ and $A>0$ is defined similarly. 

When a directed edge is used as a subscript, we write \(ij\) for the directed edge \((i,j)\); for instance, \(m_{ij}\) denotes \(m_{(i,j)}\).

For a vector $v=(v_1,\dots,v_d)\in\C^d$, we write
\[
D(v):=\operatorname{diag}(v_1,\dots,v_d)
\]
for the associated diagonal matrix. When no confusion can arise, we also identify a vector with its associated diagonal matrix in matrix products; for example, $vA$ means $D(v)A$.

For positive quantities $f$ and $g$, we write $f\lesssim g$ if $f\le Cg$ for some constant $C>0$ depending only on fixed model parameters and absolute constants. 
Throughout the paper, $c$ and $C$ denote finite positive constants that may depend only on absolute constants and fixed model parameters, and their values may change from line to line.
We also write $f\gtrsim g$ if $g\lesssim f$, and $f\sim g$ if both relations hold. In particular, $f\sim 1$ means that $f$ is bounded above and below by positive constants depending only on fixed model parameters and absolute constants.

\paragraph{Auxiliary Results}
The following two standard facts from Perron--Frobenius theory will be used throughout the paper. The first is the cyclic decomposition, or equivalently the irreducible imprimitive normal form, of a nonnegative matrix; see \cite[Chapter 3]{BrualdiRyser1991} and \cite[Chapter 2]{BermanPlemmons1994}. The second is the Perron--Frobenius theorem for irreducible nonnegative matrices; see \cite[Chapter 2]{BermanPlemmons1994} or \cite[Chapter 8]{HornJohnson2013}.

\begin{lemma}[Cyclic decomposition and eventual positivity]\label{lem:cyclic-decomp}
Let $A\in\mathbb R^{d\times d}$ be a nonnegative and irreducible matrix. Define the \emph{period} of $A$ by
\begin{equation*}\label{eq:period-def}
d_* \;:=\; \gcd\bigl\{\, n\ge 1:\ (A^n)_{ii}>0\,\bigr\},
\end{equation*}
where $i\in\{1,\dots,d\}$ is arbitrary.
Then there exists a partition
\[
\{1,\dots,d\}=\mathcal C_1\sqcup\mathcal C_2\sqcup\cdots\sqcup\mathcal C_{d_*}
\]
such that, after a simultaneous permutation of rows and columns, $A$ has the cyclic block form
\begin{equation*}\label{eq:cyclic-form}
A=
\begin{pmatrix}
0 & A_{12} & 0 & \cdots & 0\\
0 & 0 & A_{23} & \cdots & 0\\
\vdots & & \ddots & \ddots & \vdots\\
0 & 0 & \cdots & 0 & A_{d_*-1,d_*}\\
A_{d_*,1} & 0 & \cdots & 0 & 0
\end{pmatrix},
\end{equation*}
where each block $A_{i,i+1}$ is nonzero and the index $i$ is understood cyclically modulo $d_*$.

Consequently, we have
\begin{equation*}\label{eq:Sd-block-diag}
A^{d_*}=\mathrm{diag}(A_1,\dots,A_{d_*}),
\qquad 
A_i:=A_{i,i+1}A_{i+1,i+2}\cdots A_{i+d_*-1,i}.
\end{equation*}
Here each $A_i$ is primitive, i.e, for every $i$ there exists $n_i\ge 1$ such that
$
A_i^{n_i}>0.
$
\end{lemma}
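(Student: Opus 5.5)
The plan is the standard combinatorial argument through path lengths in the support graph $\mathcal G_A$. Irreducibility means that for all $i,j$ there is a directed path from $i$ to $j$, and $(A^n)_{ij}>0$ holds exactly when such a path of length $n$ exists.

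\emph{Step 1: the period is well defined.} For a vertex $i$, let $L_i:=\{n\ge1:(A^n)_{ii}>0\}$; this set is nonempty by irreducibility and closed under addition. Take any $j$, a path $i\to j$ of length $a$, and a path $j\to i$ of length $b$. Then $a+b\in L_i\cap L_j$. Moreover, for every $n\in L_j$ we get $a+n+b\in L_i$. Hence $\gcd L_i$ divides $n$ for all $n\in L_j$, so $\gcd L_i\mid\gcd L_j$, and by symmetry the two are equal. Call the common value $d_*$.

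\emph{Step 2: cyclic classes.} Fix a vertex $1$. The same concatenation argument shows that any two paths $1\to j$ have lengths congruent modulo $d_*$: closing both with a fixed path $j\to1$ gives two elements of $L_1$, and their difference is the difference of the two lengths. Define $\mathcal C_r$ to be the set of $j$ whose path lengths from $1$ are $\equiv r-1 \pmod{d_*}$. This yields a partition of $\{1,\dots,d\}$. If $A_{jk}>0$ with $j\in\mathcal C_r$, then $k\in\mathcal C_{r+1}$, which is exactly the block form. Each class is nonempty because $L_1$ contains cycles whose length is not a multiple of any proper divisor. Each block $A_{r,r+1}$ is nonzero because a path out of a vertex of $\mathcal C_r$ must enter $\mathcal C_{r+1}$. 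Multiplying the block matrix $d_*$ times gives $A^{d_*}=\mathrm{diag}(A_1,\dots,A_{d_*})$ with the stated cyclic products.

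\emph{Step 3: primitivity of $A_r$.} This step needs the most care. By the Frobenius--Schur coin problem, a set of positive integers that is closed under addition and has gcd $d_*$ contains all sufficiently large multiples of $d_*$. So $nd_*\in L_i$ for $n\ge N_i$.

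Now take $i,j\in\mathcal C_r$. There is a path $i\to j$ of length $\ell_{ij}=k_{ij}d_*$, since $i$ and $j$ lie in the same class. Prepending a closed walk at $i$ of length $nd_*$ with $n\ge N_i$ gives walks $i\to j$ of length $(n+k_{ij})d_*$ for all large $n$. Therefore $(A^{md_*})_{ij}=(A_r^m)_{ij}>0$ for all $m\ge N_i+k_{ij}$. Taking $n_r$ to be the maximum of these thresholds over the finitely many pairs in $\mathcal C_r$ gives $A_r^{n_r}>0$.

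The only genuinely nontrivial input is the numerical-semigroup fact in Step 3. I would prove it directly: choose finitely many elements of $L_i/d_*$ with gcd $1$, write $1=\sum c_k x_k$ using Bézout, and use the standard shift argument to cover every residue class beyond a threshold.
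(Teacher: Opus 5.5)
Your proof is correct. The paper does not prove this lemma itself; it defers to the standard references (Brualdi--Ryser, Berman--Plemmons), and your path-length and numerical-semigroup argument is the standard proof found there.

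The one loosely worded step is the nonemptiness of each class $\mathcal C_r$. The clean reason is that any closed walk at vertex $1$ has length a positive multiple of $d_*$. So its vertices after $0,1,\dots,d_*-1$ steps lie in $\mathcal C_1,\dots,\mathcal C_{d_*}$ respectively.
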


\begin{lemma}[Perron-Frobenius Theorem]\label{lem.PF}
Let \(A\in \mathbb{R}^{d\times d}\) be an irreducible nonnegative matrix. Then the following statements hold:
\begin{enumerate}
    \item The spectral radius \(\rho(A)\) is a positive eigenvalue of \(A\), which is denoted by $\lambda_{\max}(A)$.
    Moreover, every eigenvalue on the spectral circle \(|\lambda|=\rho(A)\) is simple.

    \item There exist left and right eigenvectors \(l>0\) and \(r>0\) associated with \(\lambda_{\max}(A)\) such that
    \[
    Ar=\lambda_{\max}(A)\,r,
    \qquad
    l^\top A=\lambda_{\max}(A)\,l^\top.
    \]
\end{enumerate}
\end{lemma}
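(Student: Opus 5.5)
The statement is Lemma~\ref{lem.PF}, the Perron--Frobenius theorem for irreducible nonnegative matrices. I would prove it via primitivity, using Lemma~\ref{lem:cyclic-decomp}.

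\emph{Step 1: existence of $r$ and positivity of $\rho(A)$.}
Irreducibility implies that $(I+A)^{d-1}>0$ entrywise. On the simplex $\Delta=\{x\ge 0:\langle x\rangle=1\}$ I would use the Collatz--Wielandt function
\[
\phi(x):=\min_{i:\,x_i>0}\frac{(Ax)_i}{x_i}.
\]
The plan is to maximize $\phi$ over the compact set $(I+A)^{d-1}\Delta$, on which all vectors are strictly positive and $\phi$ is continuous. Multiplying by $(I+A)^{d-1}$ does not decrease $\phi$, so the supremum $\lambda$ over all of $\Delta$ is attained at some $r>0$.

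Next I would show that $Ar=\lambda r$. Suppose $Ar-\lambda r\ge 0$ but is nonzero. Applying $(I+A)^{d-1}$ gives a strictly positive vector, and for the corresponding point of $\Delta$ we would get $\phi>\lambda$. This contradicts maximality.

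For any eigenpair $Av=\mu v$, the triangle inequality gives $A|v|\ge|\mu||v|$. Hence $|\mu|\le\phi(|v|/\langle|v|\rangle)\le\lambda$, so $\lambda=\rho(A)$. Finally $\lambda>0$, because $A$ is irreducible and nonzero and $r>0$, so $Ar\ne 0$.

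\emph{Step 2: the left eigenvector.}
Since $A^\top$ is also irreducible and nonnegative, applying Step 1 to $A^\top$ gives $l>0$ with $l^\top A=\rho(A)\,l^\top$.

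\emph{Step 3: simplicity on the spectral circle (the main obstacle).}
First I would prove geometric simplicity at $\rho$. Let $Av=\rho v$ with $v$ real, and choose $t$ so that $r-tv\ge 0$ has a zero entry. Then $(I+A)^{d-1}(r-tv)=2^{d-1}(r-tv)$. The left side is strictly positive unless $r-tv=0$, while the right side has a zero entry; hence $r-tv=0$. For complex $v$, apply this argument to the real and imaginary parts separately.

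Algebraic simplicity would follow by contradiction. If there were a Jordan chain $(A-\rho)w=r$, pairing with $l$ gives $0=l^\top(A-\rho)w=l^\top r>0$, which is impossible.

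For another eigenvalue $\mu=\rho e^{\ii\theta}$ on the circle, $|v|$ attains equality in $A|v|\ge\rho|v|$. Indeed, pairing with $l$ shows that the inequality is an equality, so $|v|=cr$. Writing $v=D(e^{\ii\psi})r$, equality in the triangle inequality forces the phase relation $e^{\ii\psi_j}=e^{\ii\theta}e^{\ii\psi_i}$ along every edge $i\to j$ of $\mathcal G_A$. This gives $A=e^{\ii\theta}D^{-1}AD$ with $D:=D(e^{\ii\psi})$. Hence $A$ is similar to $e^{-\ii\theta}A$, so $\mu$ inherits simplicity from $\rho$ via this diagonal similarity.

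The delicate bookkeeping here is the phase argument. It also recovers the cyclic structure of Lemma~\ref{lem:cyclic-decomp}: $e^{\ii\theta}$ must be a $d_*$-th root of unity. For this step, I would use that lemma only as a consistency check.
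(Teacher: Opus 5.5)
Your proof is correct and complete. It is the classical Wielandt (Collatz--Wielandt) argument.

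The paper gives no proof of Lemma~\ref{lem.PF}. It quotes the statement from Berman--Plemmons and Horn--Johnson, so the comparison is between a citation and your self-contained argument. The citation keeps the preliminaries short. Your route makes the lemma independent of the rest of the preliminaries. Despite your opening sentence, you never use primitivity or Lemma~\ref{lem:cyclic-decomp}; you only use the positivity of $(I+A)^{d-1}$. That is the better choice: in some standard treatments the imprimitive normal form is itself derived from the peripheral-spectrum part of Perron--Frobenius, so leaning on it could be circular. You should drop ``via primitivity'' from your stated plan.

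Two small slips need fixing, though neither affects the logic.

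\begin{itemize}
\item In the geometric-simplicity step, $A(r-tv)=\rho(r-tv)$ gives $(I+A)^{d-1}(r-tv)=(1+\rho)^{d-1}(r-tv)$, not $2^{d-1}(r-tv)$; you have silently set $\rho=1$. The contradiction only needs the scalar factor to be positive.
\item In the peripheral step, the edge relation $e^{\ii\psi_j}=e^{\ii\theta}e^{\ii\psi_i}$ gives $(D^{-1}AD)_{ij}=e^{\ii(\psi_j-\psi_i)}a_{ij}=e^{\ii\theta}a_{ij}$. Hence $D^{-1}AD=e^{\ii\theta}A$, so $A$ is similar to $e^{\ii\theta}A$; your formula has the sign of $\theta$ flipped. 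Either version gives that algebraic multiplicities of $A$ are invariant under rotation by $e^{\ii\theta}$. Evaluating this at $\rho$ yields the simplicity of $\mu=\rho e^{\ii\theta}$, exactly as you intend.
\end{itemize}
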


\subsection*{Acknowledgements}
We thank Torben Kr{\"u}ger for helpful discussions, and Charles Bordenave and Wenbo Li for informing us of the earlier work of Nagnibeda and Woess~\cite{random2002nagnibeda}. R.X. gratefully acknowledges the hospitality and support of the University of Pennsylvania during his visit. The research of Z.S. was partially supported by NSFC grants Nos. 12271475 and U23A2064. The research of J.H. was partially supported by NSF grants DMS-2246664 and DMS-2331096, and by a Sloan Research Fellowship.

\section{Main result}\label{sec:main}

We begin with the basic well-posedness result for the Dyson equation. When the matrix $S$ is nonnegative, the equation \eqref{eq:QVE} admits, for each $z\in\Cp$, a unique solution in $\Cp^d$. This solution depends analytically on $z$ and each of its components admits a Stieltjes transform representation. Moreover, the solution is automatically uniformly bounded away from the real axis, and any possible singular behavior can only occur as \(\eta\downarrow0\).

\begin{theorem}[Existence and uniqueness]\label{thm:Existence and uniqueness}
Suppose $S$ is nonnegative. For each $z\in \Cp$, the Dyson equation \eqref{eq:QVE} admits a unique solution $m(z)\in \Cp^d$. Moreover, the family of solutions defines an analytic map
\[
m:\Cp\to\Cp^d,\qquad z\mapsto m(z).
\]

Furthermore, for each component $1\le i\le d$, there exists a unique probability measure $\tilde{v}_i(\dd\tau)$ such that
\begin{equation}\label{def.v}
    m_i(z)=\int_{\R} \frac{\tilde{v}_i(\dd \tau)}{\tau-z}, \qquad   z \in \Cp.
\end{equation}
All these measures are supported in the compact interval $[-\Sigma,\Sigma]$ where
\begin{equation}\label{def.Sigma}
\Sigma:=|\mathbf {a}|_2 + 2 \|S\|_2^{1/2}.
\end{equation}
In particular, each component satisfies the elementary upper bound
\begin{equation}\label{eq.rough-boundedness}
     |m_i(z)|\le \eta^{-1},\qquad z\in\Cp.
\end{equation}
\end{theorem}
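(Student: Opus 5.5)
We follow the scheme of \cite{AEK2019quadratic}, adapted to non-symmetric $S$, and argue in four steps. Only the first and last use more than nonnegativity of $S$.

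\textbf{Existence.} Fix $z\in\Cp$ and consider the map
\[
\Phi(w):=-\frac{1}{z\mathbf 1-\mathbf a+Sw}.
\]
Since $S\ge0$, for $w\in\Cp^d$ we have $\im(z\mathbf 1-\mathbf a+Sw)\ge \eta>0$ componentwise, so $\Phi$ maps $\Cp^d$ into itself. Moreover $|\Phi(w)_i|\le \eta^{-1}$, so $\Phi$ maps into the bounded set $\{w\in\Cp^d: |w_i|\le \eta^{-1}\}$.

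To get a contraction, equip $\Cp$ with the hyperbolic metric
\[
D(\zeta,\omega):=\frac{|\zeta-\omega|^2}{\im\zeta\,\im\omega},
\]
and take the componentwise supremum on $\Cp^d$. The map $\zeta\mapsto-1/\zeta$ is an isometry for $D$. The translation $\zeta\mapsto\zeta+z$ strictly contracts $D$, with a factor depending on $\eta$ and on an upper bound for $\im\zeta$. The map $w\mapsto (Sw)_i$ satisfies, by convexity of $D$ under nonnegative linear combinations, $D((Sw)_i,(Su)_i)\le\max_j D(w_j,u_j)$. This is a standard lemma for sums with nonnegative weights: Cauchy--Schwarz gives $|\sum s_{ij}(w_j-u_j)|^2\le (\sum s_{ij}\im w_j\,\delta_j)(\sum s_{ij}\im u_j)$-type bounds. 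None of this uses symmetry.

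Restricted to the $\Phi$-invariant set $\{\im w\ge c(\eta), |w|\le\eta^{-1}\}$, the map $\Phi$ is therefore a strict contraction. The lower bound $c(\eta)$ comes from $\im\Phi(w)_i=\im(\cdot)/|\cdot|^2\ge\eta/(|z|+|\mathbf a|_\infty+\|S\|\eta^{-1})^2$. The Banach fixed point theorem then gives a solution.

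\textbf{Uniqueness and analyticity.} Any solution in $\Cp^d$ automatically lies in this invariant set, by the same two bounds. Hence the fixed point is unique. The iterates $\Phi^n(w_0)$ are analytic in $z$ and converge locally uniformly on $\Cp$, because the contraction constants are uniform on compacts. So $m$ is analytic. The bound \eqref{eq.rough-boundedness} is exactly $|\Phi(w)_i|\le\eta^{-1}$.

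\textbf{Stieltjes representation.} Each $m_i$ is a Nevanlinna (Herglotz) function on $\Cp$. It remains to check that $\ii\eta\, m_i(\ii\eta)\to-1$ as $\eta\to\infty$. From $|m|\le\eta^{-1}$ we get $Sm=O(\eta^{-1})$, hence
\[
m_i=-\bigl(z-a_i+O(\eta^{-1})\bigr)^{-1}=-z^{-1}+O(|z|^{-2}).
\]
Nevanlinna's representation theorem then gives unique probability measures $\tilde v_i$ with \eqref{def.v}.

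\textbf{Support in $[-\Sigma,\Sigma]$ (main obstacle).} The symmetric proof uses $\|F\|_2\le1$, which fails here. We instead show that for $|z|>\Sigma$, $m$ extends analytically across the real axis outside $[-\Sigma,\Sigma]$, with real values there. The support of $\tilde v_i$ is exactly where this fails, so this suffices.

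We aim for the a priori bound $|m|_2<\|S\|_2^{-1/2}$ via a Rouché/fixed-point argument on the domain
\[
\mathcal{D}_z:=\{w\in\C^d: |w|_2\le r\},\qquad r:=\|S\|_2^{-1/2}.
\]
For $|z|>\Sigma$ and $w\in\mathcal D_z$,
\[
\bigl|\bigl(z-a_i+(Sw)_i\bigr)\bigr|\ge |z|-|\mathbf a|_2-\|S\|_2^{1/2}>\|S\|_2^{1/2},
\]
so $|\Phi(w)_i|<r$. Passing from this componentwise bound to $|\Phi(w)|_2<r$ loses a factor $\sqrt d$. We plan to avoid this by working in the $\ell^\infty$ norm: use $\|S\|_{\infty\to\infty}$, or rather bound $\sum_j s_{ij}|w_j|$ directly. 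If this does not close with the stated constant $\Sigma$, we fall back on the resolvent interpretation $\tilde v_i=$ spectral measure, arising as a limit of random matrices with variance profile $S$, whose norm is at most $|\mathbf a|_\infty+2\|S\|^{1/2}$.

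Assuming the bound holds, $\Phi$ is a contraction on $\mathcal D_z$ for $|z|$ large, since $\|D\Phi\|\le r^2\|S\|<1$. Analytic continuation in $z$ along rays down to $|z|=\Sigma$ then shows that $m$ is analytic on $\C\setminus[-\Sigma,\Sigma]$, real on the real axis, and coincides with the $\Cp$ solution by uniqueness. By Stieltjes inversion, $\supp\tilde v_i\subset[-\Sigma,\Sigma]$. This step, making the contraction constant match exactly $\Sigma=|\mathbf a|_2+2\|S\|_2^{1/2}$ without symmetry, is where I expect the real work.
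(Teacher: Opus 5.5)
\textbf{Gap: the support step is not proved, and it is false with the stated constant.} Your first three steps are fine. They are the standard argument of the reference the paper defers to, and they use only $S\ge0$: the hyperbolic-metric contraction of $\Phi$ on $\{\im w\ge c(\eta),\,|w_i|\le\eta^{-1}\}$, uniqueness because every solution in $\mathbb C_+^d$ lies in that set, analyticity of the locally uniform limit of the iterates, and the Nevanlinna representation from $\ii\eta\,m_i(\ii\eta)\to-1$.

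The support step is where it fails. You do not prove it (``Assuming the bound holds''), and it cannot be proved: $\supp\tilde v_i\subset[-\Sigma,\Sigma]$ is false for $\Sigma=|\mathbf a|_2+2\|S\|_2^{1/2}$, even for symmetric irreducible $S$.
\begin{itemize}
\item \emph{Symmetric counterexample.} Take $d=3$, $\mathbf a=0$, and $S$ the adjacency matrix of the path $2-1-3$. Then $\|S\|_2=\sqrt2$ and $\Sigma=2\cdot2^{1/4}\approx2.378$. By uniqueness $m_2=m_3=-1/(z+m_1)$. Eliminating gives $z m_1^2+(z^2-1)m_1+z=0$, with discriminant $(z^2-2z-1)(z^2+2z-1)$. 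This is negative for real $z=\tau\in(\sqrt2-1,\sqrt2+1)$. The boundary value is a root with nonnegative imaginary part, so $\im m_1(\tau)>0$ there. Hence $\tilde v_1$ has positive density up to $1+\sqrt2\approx2.414>\Sigma$.
\item \emph{Non-symmetric counterexample.} Take $s_{1j}=1$ for $2\le j\le d$ and all other entries zero. Then $m_j=-1/z$ for $j\ge2$ and $m_1=-z/(z^2-(d-1))$, so $\tilde v_1=\tfrac12(\delta_{\sqrt{d-1}}+\delta_{-\sqrt{d-1}})$. But $2\|S\|_2^{1/2}=2(d-1)^{1/4}<\sqrt{d-1}$ once $d\ge18$.
\end{itemize}
So the paper's statement itself needs a different $\Sigma$; the defect is not only in your proposal.

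\textbf{What does work.} Your $\ell^\infty$ idea succeeds, but with the row-sum norm $\|S\|_\infty:=\max_i\sum_j s_{ij}$ and the constant $\Sigma':=\max_i|a_i|+2\|S\|_\infty^{1/2}$.
\begin{itemize}
\item \emph{Contraction.} Use $|(Sw)_i|\le\|S\|_\infty|w|_\infty$ and the componentwise identity $\Phi(w)-\Phi(w')=\Phi(w)\Phi(w')\,S(w-w')$. For $|z|>\Sigma'$, $\Phi$ maps the ball $\{|w|_\infty\le\|S\|_\infty^{-1/2}\}$ into itself with Lipschitz constant $\rho^2\|S\|_\infty<1$, where $\rho:=(|z|-\max_i|a_i|-\|S\|_\infty^{1/2})^{-1}$. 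This holds uniformly on compact subsets of $\{|z|>\Sigma'\}$, so the fixed point is analytic there.
\item \emph{Identification with $m$.} For $\eta>\|S\|_\infty^{1/2}$, the bound $|m_i|\le\eta^{-1}$ places $m$ in the ball, so $m$ equals the ball fixed point there. By the identity theorem they agree on the connected set $\{z\in\mathbb C_+:|z|>\Sigma'\}$.
\item \emph{Support.} For real $z$ the fixed point is real, so $\im m\to0$ locally uniformly on $\{|\tau|>\Sigma'\}$. Stieltjes inversion then gives $\supp\tilde v_i\subset[-\Sigma',\Sigma']$.
\end{itemize}
This is the form in which the sup-norm argument of the cited reference actually works, and the theorem should be stated with $\Sigma'$ or any larger constant.

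Your random-matrix fallback is not available. A Hermitian matrix has a symmetric variance profile, so non-symmetric $S$ (such as the non-backtracking case) has no such model. Even for symmetric $S$, the resulting norm bound is in terms of row sums, not $\|S\|_2$, as the path example shows.
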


For the proof of the above theorem, we refer the reader to \cite[Section 4]{AEK2019quadratic}. Although an additional symmetry assumption is imposed there, the argument only uses the nonnegative property.

\medskip

We next introduce a structural assumption on the matrix \(S\). 
\begin{definition}
    A nonnegative
matrix \(A\) is called \emph{irreducible} if, for any
\(i,j\in\{1,\dots,d\}\), there exists an integer \(n=n(i,j)\ge 1\) such that
$
    (A^n)_{ij}>0 .
$
\end{definition}
In finite dimension, an irreducible nonnegative matrix admits the classical
cyclic decomposition; see Lemma~\ref{lem:cyclic-decomp} for details.

Actually, the irreducibility assumption is natural in what follows. If \(S\) is
reducible, then the Dyson equation can be triangularized according to the irreducible
components of \(S\) and studied recursively block by block. Thus the
irreducible case is the basic indecomposable case.

\begin{theorem}[Decomposition of measures]\label{thm.decompose of measure}
  Suppose $S\in\R^{d\times d}$ is nonnegative and irreducible. 
  The singular set
  \[
  \mathcal A_j
  :=
  \Big\{\tau\in\mathbb R:
  \lim_{\eta\downarrow0}|m_j(\tau+\mathrm i\eta)|=\infty
  \Big\}
  =
  \Big\{\tau\in\mathbb R:
  \lim_{\eta\downarrow0}\im m_j(\tau+\mathrm i\eta)=\infty
  \Big\}
  \]
  is finite.
  For every $\tau\notin\mathcal{A}_j$, the boundary value
  \begin{equation}\label{def.mj}
     m_j(\tau):=\lim_{\eta\downarrow0}m_j(\tau+\mathrm i\eta)
  \end{equation}
  exists and is finite. In particular, if $m(z)$ is uniformly bounded, then each component \(m_i(z)\), originally defined on \(\Cp\), admits a unique and continuous extension to \(\overline{\Cp}\).

  The probability measure $\tilde{v}_j(\dd\tau)$ defined in \eqref{def.v} has the following decomposition: there exists a finite collection of real numbers $\{\tau_{j,k}\}_{k=1}^{N_j}\subset\mathcal{A}_j$ such that
  \begin{equation}\label{eq.measure-decomposition}
    \tilde{v}_j(\dd\tau)=v_j(\tau)\dd\tau+\sum_{k=1}^{N_j}\Delta_{j,k}\delta_{\tau_{j,k}}(\dd\tau),
  \end{equation}
  where the atomic weights $\Delta_{j,k}$ and the possibly $\infty$-valued density $v_j(\tau)$ are determined by the boundary values of \(m_j\):
  \begin{equation}\label{def.vj-Delta}
    \Delta_{j,k}:=\lim_{\eta\downarrow 0}\eta\im m_j(\tau_{j,k}+\mathrm i\eta),\quad v_j(\tau):=\frac{1}{\pi}\lim_{\eta\downarrow 0}\im \Big(m_j(\tau+\mathrm i\eta)-\sum_{k=1}^{N_j}\frac{\Delta_{j,k}}{\tau_{j,k}-(\tau+\mathrm i\eta)}\Big).
  \end{equation}
  The regular support of the generating density $v_j(\tau)$ is defined by
  \begin{equation}\label{def.Sj}
    \mathfrak{S}_j
    :=
    \bigl\{\tau\notin\mathcal{A}:~ v_j(\tau)>0\bigr\},\qquad \mathcal{A}:=\cup_{j=1}^d\mathcal{A}_j.
  \end{equation}
  Then $v_j(\tau)$ is continuous on $\mathfrak{S}_j$. Moreover, $\mathfrak{S}_j$ is independent of $j$. We therefore suppress the component index and write $\mathfrak{S}$. Finally, \(\mathfrak{S}\) is a finite union of open intervals.
\end{theorem}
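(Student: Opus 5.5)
The plan is to exploit the fact that the Dyson equation is polynomial. After clearing denominators, \eqref{eq:QVE} reads $1+m_i\,(z-a_i+(Sm)_i)=0$ for $i=1,\dots,d$, which is a system of $d$ polynomial equations in $(z,m)\in\C^{1+d}$. Let $V\subset\C^{1+d}$ be the affine variety it cuts out.

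First I show that the graph $\{(z,m(z)):z\in\Cp\}$ lies in a one-dimensional irreducible component $\Gamma$ of $V$. For $|z|$ large, Theorem~\ref{thm:Existence and uniqueness} gives $m(z)=-z^{-1}\mathbf 1+O(|z|^{-2})$. Hence $I-m^2S$ is invertible there, and the implicit function theorem shows that $V$ is locally a smooth graph over $z$ near these points. Therefore the component containing the graph has dimension one. Projecting $\Gamma$ onto the coordinates $(z,m_j)$ gives an irreducible polynomial $P_j(z,w)\not\equiv 0$ with $P_j(z,m_j(z))=0$, so each $m_j$ is an algebraic function of $z$.

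This is the step I expect to be the main obstacle. One must rule out that the analytic branch sits in a higher-dimensional component. The local-graph argument at large $|z|$, together with analytic continuation along $\Cp$ (the graph is connected and irreducible as an analytic set), is what I would rely on. I would package the resulting boundary behaviour as a Puiseux-type lemma.

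By Puiseux's theorem, at every $\tau\in\R$ the branch $m_j(\tau+\ii\eta)$ has a convergent expansion $\sum_{k\ge k_0}c_k(z-\tau)^{k/q}$ on the sector approaching from $\Cp$. Consequently $m_j(\tau+\ii\eta)$ either converges to a finite limit as $\eta\downarrow0$, or $|m_j|\to\infty$ like $\eta^{k_0/q}$ with $k_0<0$. The second case can occur only at the finitely many real roots of the leading coefficient of $P_j$ in $w$, so $\mathcal A_j$ is finite.

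The bound \eqref{eq.rough-boundedness} forces $-k_0/q\le1$. A blow-up with exponent exactly $1$ produces an atom, with weight given by \eqref{def.vj-Delta}. A blow-up with exponent smaller than $1$ is integrable, so it contributes only to the density, which may be infinite at $\tau$. The equality of the two descriptions of $\mathcal A_j$ follows because $\im m_j$ dominates: the leading Puiseux term approached along $\ii\eta$ has nonvanishing imaginary part, or else positivity of $\tilde v_j$ would be violated. Stieltjes inversion applied to $\tilde v_j$ then yields \eqref{eq.measure-decomposition}. The existence of the limit \eqref{def.mj} off $\mathcal A_j$, and the continuous extension to $\overline{\Cp}$ when $m$ is bounded, are immediate from the expansions. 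Continuity of $v_j$ on $\mathfrak S_j$ likewise follows, since $m_j$ extends continuously and is even real-analytic off a finite set.

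For the $j$-independence of $\mathfrak S$, I take imaginary parts in \eqref{eq:QVE} to get $\im m=|m|^2(\eta\mathbf 1+S\im m)$, and let $\eta\downarrow0$ at $\tau\notin\mathcal A$, which gives $v=|m(\tau)|^2 Sv$. Here $m_i(\tau)\ne0$ for every $i$: otherwise $-1/m_i$ would be infinite, forcing some $m_k(\tau)$ to be infinite, i.e.\ $\tau\in\mathcal A$. So if $v_j(\tau)>0$ and $s_{ij}>0$, then $v_i(\tau)\ge|m_i|^2s_{ij}v_j>0$. Positivity thus propagates backward along edges, and irreducibility of $S$ spreads it to all indices.

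Finally, on each open interval of $\R\setminus\mathcal A$, every $m_j$ is algebraic and analytic away from the finitely many branch points. Hence $\im m_j(\tau)$ either vanishes identically on a subinterval between consecutive exceptional points or has finitely many zeros there. This shows that $\mathfrak S$ is a finite union of open intervals.
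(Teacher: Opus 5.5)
Your proposal is correct and follows essentially the same route as the paper: Puiseux expansions at real points, the leading-term positivity argument that identifies the two descriptions of $\mathcal A_j$, atom removal plus Stieltjes inversion, the limiting imaginary-part identity combined with irreducibility for the $j$-independence of $\mathfrak S_j$, and local Puiseux structure for the interval decomposition (at the endpoints of each analyticity interval, invoke the one-sided Puiseux expansion so that zeros of $\im m_j$ cannot accumulate there). The only substantive differences are that you derive the algebraicity of $m_j$ yourself, using invertibility of $I-m^2S$ for large $|z|$ to place the graph of $m$ in a one-dimensional component of the variety, and you get finiteness of $\mathcal A_j$ from the real zeros of the leading coefficient of $P_j$; the paper instead cites the Puiseux lemma from Anantharaman--Sabri and proves finiteness by a covering argument on $[-\Sigma,\Sigma]$.
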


We now classify points on the real line using the notation of the preceding theorem.

\begin{definition}[Regular bulk, edge and cusp points]
For a point \(\tau\in\mathbb R\), we classify it as follows.
\begin{itemize}
  \item \emph{Regularity.}
  A point $\tau\in\mathbb R$ is called \emph{non-regular} point if 
  $
    \langle \im m(\tau)\rangle=\infty.
  $
  Otherwise, \(\tau\) is called \emph{regular} point.
  \item \emph{Types of regular points.}
  Regular points in $\overline{\mathfrak{S}}$ are further classified into the following three types.
  \begin{itemize}
    \item \textbf{Bulk.}
    A regular point $\tau$ is called a bulk point if $\tau\in\mathfrak{S}$.

    \item \textbf{Edge.}
    A regular point \(\tau\in\partial\mathfrak{S}\) is called a regular edge if it is an endpoint of exactly one open interval of
    \({\mathfrak S}\).

    \item \textbf{Cusp.}
    A regular point \(\tau\in\partial\mathfrak{S}\) is called a regular cusp if it is not an edge. In this case, \(\tau\) is the shared endpoint of two open intervals of  \({\mathfrak S}\).
  \end{itemize}
\end{itemize}
  By the definition of \(\mathfrak S\) in Theorem \ref{thm.decompose of measure}, if a regular point \(\tau\) is either an edge or a cusp, then necessarily
  $
    \langle \im m(\tau)\rangle=0.
  $
\end{definition}

Before stating the next results, we introduce two pieces of notation. For an closed
interval \(I=[\alpha,\beta]\subset\mathbb R\), we set
\[
    \mathbb C_+(I):=\{z\in\mathbb C_+:\re z\in I\}.
\]
Thus \(\mathbb C_+(I)\) is the part of the complex upper half-plane lying above
the interval \(I\).

We say that the solution \(m(z)\) is \emph{uniformly bounded} on \(\mathbb C_+(I)\) if
there exists a constant \(\Phi_I<\infty\) such that
\[
    \sup_{z\in\mathbb C_+(I)} |m(z)|_2\le \Phi_I .
\]
Here \(\Phi_I\) is regarded as a model parameter.
Whenever this boundedness condition holds, Theorem~\ref{thm.decompose of measure} implies
that \(m(z)\) extends uniquely and continuously from \(\mathbb C_+(I)\) to $\overline{\mathbb C_+}(I)$.
We shall henceforth identify \(m\) with this extension whenever the boundedness condition is in force.

\medskip
The following result shows that local boundedness already implies the correct
componentwise size of \(m\), as well as comparability of the imaginary parts
across all components.
\begin{theorem}[Boundedness of Solution]\label{thm.bound}
    Suppose $S$ is nonnegative and irreducible, and the solution $m(z)$ is uniformly bounded on $\C_+(I)$, where $I\subset\R$ is a closed interval.
 Then we have, for every $1\le i\le d$,
  \begin{equation}\label{eq.m-bound}
    |m_i(z)|\sim\frac{1}{1+|z|},\qquad z\in\overline{\Cp}(I).
  \end{equation}
  Moreover, the imaginary part of the solution $m(z)$ is comparable, i.e, for every $1\le i<j\le d$,
  \begin{equation}\label{eq.m-Im}
    \im m_i(z)\sim\im m_j(z),\qquad z\in\overline{\Cp}(I).
  \end{equation}
\end{theorem}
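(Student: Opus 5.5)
We need to prove $|m_i(z)|\sim 1/(1+|z|)$ and the comparability of imaginary parts across components, for $z\in\overline{\mathbb C_+}(I)$.

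\textbf{Upper bound for large $|z|$.} By Theorem~\ref{thm:Existence and uniqueness}, the measures $\tilde v_i$ are probability measures supported in $[-\Sigma,\Sigma]$. Hence for $|z|\ge 2\Sigma$ we have $|m_i(z)|\le 1/\dist(z,[-\Sigma,\Sigma])\lesssim 1/(1+|z|)$. For $|z|\le 2\Sigma$, the assumption $|m|_2\le\Phi_I$ gives $|m_i|\lesssim 1\sim 1/(1+|z|)$. This settles the upper bound.

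\textbf{Lower bound.} Take absolute values in the equation:
\[
1/|m_i|=|z-a_i+(Sm)_i|\le |z|+|\mathbf a|_2+\|S\|\,|m|_2 .
\]
Since $|m|_2\le\Phi_I$ on $\mathbb C_+(I)$, this yields $|m_i|\gtrsim 1/(1+|z|)$. Both bounds pass to $\overline{\mathbb C_+}(I)$ by continuity of the extension provided by Theorem~\ref{thm.decompose of measure}. The lower bound also shows the boundary values are nonzero.

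\textbf{Comparability of imaginary parts.} Take imaginary parts of $-1/m=z-\mathbf a+Sm$:
\[
\frac{\im m}{|m|^2}=\eta\mathbf 1+S\im m .
\]
Set $w:=|m|^2$. By the previous step, $w_i\sim (1+|z|)^{-2}$ uniformly. Then $\im m_i\ge w_i\,(S\im m)_i\ge w_i s_{ij}\im m_j$, and iterating along $\mathcal G_S$ gives
\[
\im m\ge (D(w)S)^n\,\im m\qquad\text{entrywise, for all } n .
\]
Irreducibility gives, for each pair $(i,j)$, a path from $i$ to $j$ of length $n(i,j)\le d$. Hence
\[
\im m_i\ge \Big(\min_k w_k\Big)^{n}(S^n)_{ij}\,\im m_j\gtrsim (1+|z|)^{-2d}\,\im m_j .
\]

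\textbf{Main obstacle.} The factor $(1+|z|)^{-2d}$ is bounded below only if $|z|$ is bounded. So the large-$|z|$ regime has to be handled separately, and I expect this to be the main point requiring care.

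For $|z|\ge 2\Sigma$, work directly from the Stieltjes representation:
\[
\im m_i(z)=\eta\int\frac{\tilde v_i(\dd\tau)}{|\tau-z|^2},\qquad |\tau-z|\sim|z| \text{ on }[-\Sigma,\Sigma].
\]
Since each $\tilde v_i$ is a probability measure on $[-\Sigma,\Sigma]$, this gives $\im m_i\sim \eta/|z|^2$ for every $i$, so the components are comparable there.

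For $|z|\le 2\Sigma$, the path argument gives comparability with constants depending only on $d$, $S$, $\Phi_I$, $\Sigma$.

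On the real axis ($\eta=0$), take limits from $\mathbb C_+(I)$ using the continuous extension. The two-sided comparability $\im m_i\sim\im m_j$ is preserved in the limit, including the case where both sides vanish. This proves \eqref{eq.m-Im}.
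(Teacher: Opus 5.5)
Your proof is correct and follows essentially the same route as the paper. The paper's proof defers to \cite[Section~5]{AEK2019quadratic} and changes only one step: primitivity is replaced by exactly your path argument, $\im m \gtrsim \sum_{t=1}^{d-1} S^t \im m$, obtained by iterating $\im m \ge D(|m|^2)S\,\im m$ along shortest directed paths. Your separate treatment of $|z|\ge 2\Sigma$ via the Stieltjes representation makes explicit a point the paper leaves implicit: the path bound alone degrades like $(1+|z|)^{-2d}$, so it gives uniform constants only for bounded $|z|$.
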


\medskip
The following theorem shows that on intervals disjoint from the singular set \(\mathcal A\), local uniform
boundedness yields absolute continuity of the measures and H\"older regularity of both the densities and the solution.

\begin{theorem}[Regularity of Solution]\label{thm.regularity}
  Suppose $S$ is nonnegative, irreducible and the solution $m(z)$ is uniformly bounded on $\overline{\C_+}(I)$, where $I\subset\R$ is a closed interval.
  Then we have
  \begin{enumerate}
    \item On the interval \(I\), the measure \(\widetilde v_i\) has a Lebesgue density, that is,
    $
        \widetilde v_i(\dd \tau)=v_i(\tau)\,\dd \tau
    $
    for $\tau\in I$.
    Densities of all components are comparable, i.e, for every $1\le i<j\le d$,
    \begin{equation}\label{eq.v-comparable}
         v_i(\tau)\sim v_j(\tau),\qquad\tau\in I.
    \end{equation}

    \item The density is uniformly $\alpha$-H\"older continuous everywhere, i.e, for every $1\le i\le d$,
    \begin{equation}\label{eq.v-holder}
      \qquad|v_i(\tau_2)-v_i(\tau_1)|\lesssim|\tau_2-\tau_1|^\alpha,\qquad\tau_1,\tau_2\in I
    \end{equation}
    where $\alpha$ is a finite positive exponent depending only on model parameters.

    \item 
    The solution \(m_i(z)\) is uniformly $\alpha$-H\"older continuous, i.e, for every $1\le i\le d$,
    \begin{equation}\label{eq.m-holder}
      |m_i(z_1)-m_i(z_2)|\lesssim |z_1-z_2|^\alpha,\qquad z_1,z_2\in\overline{\Cp}(I).
    \end{equation}
  \end{enumerate}
\end{theorem}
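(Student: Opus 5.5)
We will obtain Theorem~\ref{thm.regularity} from the Stieltjes representation \eqref{def.v}, the a priori bounds of Theorem~\ref{thm.bound}, and a H\"older-type stability estimate for the Dyson equation derived from the imaginary-part identity. \textbf{Step 1: absolute continuity and comparability.} Since $|m(z)|_2\le\Phi_I$ on $\overline{\Cp}(I)$, we have $\im m_i(\tau+\ii\eta)\le\Phi_I$ for $\re z\in I$. By Stieltjes inversion, $\widetilde v_i$ restricted to the interior of $I$ has a bounded Lebesgue density
\[
v_i(\tau)=\pi^{-1}\lim_{\eta\downarrow0}\im m_i(\tau+\ii\eta),
\]
where the limit exists by Theorem~\ref{thm.decompose of measure}, because $I\cap\mathcal A=\emptyset$. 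The endpoints carry no atoms, since $\eta\im m_i\le\eta\Phi_I\to0$. Comparability \eqref{eq.v-comparable} then follows by letting $\eta\downarrow0$ in \eqref{eq.m-Im}, with the implicit constants uniform in $\eta$.

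\textbf{Step 2: the imaginary-part equation and a Perron--Frobenius lower bound.} Taking imaginary parts in \eqref{eq:QVE} gives
\[
\frac{\im m}{|m|^2}=\eta+S\im m .
\]
With $F$ as in \eqref{def.F} and $f:=\im m/|m|$, this reads $f=\eta|m|+Ff$. Let $l>0$ be the left Perron vector of $F$. Since $|m_i|\sim1$ on $\overline{\Cp}(I)$ by \eqref{eq.m-bound} and $S$ is irreducible, $F$ is irreducible with entries comparable to those of $S$, and $l$ has comparable entries uniformly in $z$. Pairing with $l$ gives
\[
(1-\lambda_{\max}(F))\langle l,f\rangle=\eta\langle l,|m|\rangle .
\]
Hence $\lambda_{\max}(F)\le1$, and $1-\lambda_{\max}(F)\sim\eta/\langle f\rangle$ whenever $\langle f\rangle>0$. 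This replaces the symmetric bound $\|F\|_2\le1$.

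\textbf{Step 3: H\"older continuity via a weak stability bound.} We would show a derivative bound of the form
\[
|\partial_z m(z)|\lesssim(\im\langle m(z)\rangle)^{-K}
\]
for some $K$ depending only on model parameters. Differentiating \eqref{eq:QVE} gives $(I-m^2S)\partial_zm=m^2$, and via \eqref{eq.I-m2S-transform} it suffices to bound $\|R^{-1}\|$. Write $U-F=(U-I)+(I-F)$. On the orthogonal complement of the Perron direction, $I-F$ has a gap of order one, because $F$ is irreducible with comparable entries and $\lambda_{\max}(F)\le1$; we would use a crude compactness/uniform Perron--Frobenius argument here, not the sharp Markov estimates developed later. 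In the Perron direction $r$, the phase matrix $U$ differs from $I$ by terms of size $\sim\langle\im m\rangle$, since
\[
|m_i|^2/m_i^2-1=-2\ii\,\im m_i\,\overline{m_i}/m_i^2+\dots
\]
and $\im m_i\sim\im m_j$. A Schur-complement argument on this two-block decomposition yields $\|R^{-1}\|\lesssim\langle\im m\rangle^{-C}$. A polynomial blow-up is all we need. Combining this with $\im\langle m\rangle\ge0$ gives the standard ODE comparison
\[
\partial_\eta\langle\im m\rangle\gtrsim-\langle\im m\rangle^{-K},
\]
which integrates to H\"older continuity with $\alpha=1/(K+1)$. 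Concretely, in the region where $\langle\im m\rangle\ge\delta$ we have $|m(z_1)-m(z_2)|\lesssim\delta^{-K}|z_1-z_2|$. In the region where it is $\le\delta$, both values are close to their real parts, and the real parts are controlled by the same equation. Choosing $\delta=|z_1-z_2|^{1/(K+1)}$ gives \eqref{eq.m-holder}, and \eqref{eq.v-holder} follows by taking boundary values.

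\textbf{Main obstacle.} The hardest part is Step 3: a quantitative lower bound on $s_{\min}(R)$ that is uniform in $z$ when $F$ is non-symmetric and $\|F\|_2$ may exceed one. The cyclic-permutation example in Appendix~\ref{app.Counterexamples} shows that $R$ can be exactly singular at bulk points, so a uniform polynomial bound cannot hold for general $S$. If Step 3 fails, the fallback is an algebraic argument. The boundary values of $m$ are algebraic functions (Lemma~\ref{lem:puiseux}), so near each of the finitely many points where $\det(I-m^2S)=0$ they have Puiseux expansions with exponents of the form $p/q$ with $q\le q_0$, where $q_0$ depends only on $d$. This gives H\"older continuity with $\alpha=1/q_0$ locally. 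Away from those points the implicit function theorem gives analyticity, and compactness of $I$ together with the boundedness $\Phi_I$ makes the constants uniform.
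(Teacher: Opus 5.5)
Step 3 fails under the hypotheses of the theorem, so drop it rather than keep it as the main line. The bound $\|R^{-1}\|_2\lesssim\langle\im m\rangle^{-C}$ is false for general nonnegative irreducible $S$. The cyclic-permutation example of Appendix~\ref{app.Counterexamples} satisfies every assumption of the theorem: take $I$ a small interval around $z_0=-2\cos(2\pi/d)$, where $|m|\le1$. There $\langle\im m\rangle\sim1$, yet $\|R_\eta^{-1}\|_2\sim\eta^{-1}$.

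Your Schur-complement sketch only closes when the perturbation $\|U-I\|_2\sim\langle\im m\rangle$ is small compared with the order-one gap of $I-F$ on the complement of the Perron direction. For non-symmetric $F$ that complement must be the oblique space $\Ran(I-rl^\top)$, not the orthogonal one. The smallness condition holds only in the zero-density regime of Lemma~\ref{lem:B-bad-direction-nonsym-local}. In the bulk $U$ is an order-one rotation, and resonances of the support graph (Lemma~\ref{lem:UminusQ-graph-resonance}) can make $R$ singular. The ODE comparison therefore has no input. Your claim that the real parts are ``controlled by the same equation'' where $\langle\im m\rangle\le\delta$ is also unsupported.

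Your Step 1 and your fallback are the correct proof, and they are essentially the paper's argument in a slightly more complicated form. The paper applies Lemma~\ref{lem:puiseux} at every $\tau\in I$, not only at zeros of $\det(I-m^2S)$. So you need neither the implicit function theorem nor a finiteness argument for those zeros. Your remark that the ramification index is bounded in terms of $d$ alone (a Bézout-type bound) is a harmless bonus.

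Two points should be spelled out.
\begin{enumerate}
\item Boundedness excludes negative powers, so $m_i(z)=F_\tau((z-\tau)^{1/n})$ with $F_\tau$ analytic near $0$. Since $F_\tau$ is Lipschitz on a small disc and $\zeta\mapsto\zeta^{1/n}$ is $1/n$-H\"older on the closed upper half-plane, this gives the two-point bound $|m_i(z)-m_i(w)|\le C_\tau|z-w|^{1/n}$ for all $z,w$ in a half-disc around $\tau$. That is what a finite cover of $I$ needs; an expansion at $\tau$ alone only controls $|m_i(z)-m_i(\tau)|$.
\item $\overline{\Cp}(I)$ is not compact, so compactness does not cover $\eta\ge\eta_0$. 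There one uses $|\partial_z m_i|\le\widetilde v_i(\R)\,\eta_0^{-2}$ from \eqref{def.v}, together with the trivial bound $2\Phi_I$ when $|z-w|>1$.
\end{enumerate}
Finally, \eqref{eq.v-holder} follows from $v_i=\pi^{-1}\im m_i$ on $I$.
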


\medskip

We now turn to the singularity and stability theory of the solution. The
results below will be stated for two classes of models which are relevant for
our later analysis. We first specify the assumptions defining these two model
classes.

\noindent\textbf{Assumption \((\mathrm{Sym})\).}
The matrix \(S\) is symmetric.

\medskip

\noindent\textbf{Assumption \((\mathrm{NB})\).}
The matrix \(S\) is the non-backtracking matrix associated with an undirected
connected base graph
$
\mathcal G=(\mathbb V_{\mathcal G},E_{\mathcal G}),
$
and the base graph has minimal degree at least three, i.e.
\[
\deg_{\mathcal G}(x)\ge 3,\qquad x\in\mathbb V_{\mathcal G}.
\]
Moreover, the vector \(\mathbf{a}\) in Dyson equation \eqref{eq:QVE} is terminal-dependent: 
\[
a_{(x,y)}=a_y, \qquad (x,y)\in\vec E_{\mathcal G}.
\]

In the sequel, we shall simply say that \(S\) satisfies
\((\mathrm{Sym})\) or \((\mathrm{NB})\), instead of repeating these structural
conditions in every statement. 

\medskip

The next result describes the only possible local behaviors at regular
boundary points of the support. Regular edges give rise to square-root
density growth, while regular cusps, under the additional structural
assumptions \(\mathrm{(Sym)}\) or \(\mathrm{(NB)}\), give rise to cubic-root
density growth.

\begin{theorem}[Singularity of Solution]\label{thm:singularities}
    Suppose $S$ is a nonnegative and irreducible matrix. Let \(\tau\in\partial\mathfrak S\) be a regular boundary point.
    \begin{itemize}
        \item If \(\tau\) is a regular edge, then each \(v_i\) exhibits a square-root singularity at \(\tau\). More precisely, for all \(1\le i\le d\), there exists a constant \(c_i>0\) depending only on the model parameters such that
        \begin{equation}
        \label{edge singularity}
        v_i(\tau\pm\omega)
        =
        c_i\omega^{1/2}
        +
        O(\omega),
        \qquad
        \omega\downarrow 0,
        \end{equation}
        where the sign is chosen according to whether \(\tau\) is the left or right endpoint.

        \item If \(\tau\) is a regular cusp and, in addition, \(S\) satisfies \(\textup{(Sym)}\) or \(\textup{(NB)}\), then each \(v_i\) exhibits a cubic-root singularity at \(\tau\). More precisely, for all $1\le i\le d$, there exists a finite constant \(c_i>0\) depending only on the model parameters such that
        \begin{equation}\label{cusp singularity}
        v_i(\tau+\omega)
        =
        c_i|\omega|^{1/3}
        +
        O\bigl(|\omega|^{2/3}\bigr),
        \qquad
        \omega\to 0.
        \end{equation}
    \end{itemize}
\end{theorem}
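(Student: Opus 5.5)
We will follow the cubic-equation strategy of \cite{AEK2017singularities,AEK2019quadratic}, replacing the symmetric-only inputs by the Perron--Frobenius and resonance tools announced in the introduction.

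\textbf{Effective scalar equation.} Fix a regular boundary point $\tau$. By Theorem~\ref{thm.bound} and Theorem~\ref{thm.regularity}, $m$ is bounded and H\"older continuous near $\tau$, with $|m_i|\sim1$. Moreover $\im m(\tau)=0$, so $U(\tau)=I$ and $F(\tau)=|m(\tau)|S|m(\tau)|$ is a nonnegative irreducible matrix. Taking imaginary parts of \eqref{eq:QVE} and letting $\eta\downarrow0$ from inside $\mathfrak S$ gives, after dividing by $|m|$, an approximate Perron eigenvalue relation for $F$. We conclude that $\lambda_{\max}(F(\tau))=1$ and that $R(\tau)=I-F(\tau)$ has a one-dimensional kernel, spanned by the right Perron vector $r$, with left Perron vector $l$.

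We normalize $F$ into a Markov matrix $D(r)^{-1}FD(r)$, using $l$ and $r$. The mixing-space estimate on the complement of the Perron direction then gives a bounded inverse of $R(\tau)$ on $\{x:\langle l,x\rangle=0\}$. Here irreducibility suffices, because $U=I$, so no phase resonance enters.

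Write $w=\tau+\omega$ and $h:=m(w)-m(\tau)$. Expand \eqref{eq:QVE} to get $(I-m^2S)h=m^2(\omega+hSh\,\cdot)+O(|h|^3)$, transform by \eqref{eq.I-m2S-transform}, and decompose $h=\Theta\, r'+h_\perp$ with $r'=|m|r$. The component $h_\perp$ is solved as $O(|\Theta|^2+|\omega|)$ by the bounded inverse. Projecting onto $l$ then yields the cubic
\[
\mu_3\Theta^3+\mu_2\Theta^2-\pi_\omega\,\omega=O\bigl(|\Theta|^4+|\omega||\Theta|\bigr),
\qquad \pi_\omega=\langle l,|m|r\rangle>0,
\]
where $\mu_2=\langle l, \sign(m)\,r\,(Fr)\rangle$-type sums, up to sign conventions, and $\mu_3$ involves $l,r$ together with the inverse of $R$ on the complement.

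\textbf{Edge case.} At a regular edge we will show $\mu_2\neq0$. We argue by contradiction. If $\mu_2=0$, then since $\im\Theta\ge0$ must select a branch, the cubic forces $v\gtrsim|\omega|^{1/3}$ on both sides of $\tau$. This contradicts $\tau$ being the endpoint of exactly one interval, because $v\equiv0$ on the other side. Once $\mu_2\neq0$, the quadratic branch gives $\Theta=\sqrt{\pi_\omega\omega/\mu_2}\,(1+O(\omega^{1/2}))$. Choosing the root with nonnegative imaginary part gives $v_i=c_i\omega^{1/2}+O(\omega)$ on the support side, with $c_i\propto l$-weighted $r_i|m_i|^2$, which is positive. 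This part needs no symmetry. It does use the fact that at an edge the real direction $\im\Theta=0$ is allowed on the outside. That fact follows since $\Theta$ is real for $\omega$ of one sign, which is consistent with $\mu_2\omega>0$ there.

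\textbf{Cusp case and main obstacle.} At a cusp, the same dichotomy forces $\mu_2=0$: if $\mu_2\neq0$, we would get a square-root branch vanishing on one side. The cubic root law $v_i=c_i|\omega|^{1/3}+O(|\omega|^{2/3})$ then follows by the standard Cardano analysis, provided $\mu_3\neq0$.

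The crux is therefore to prove $\mu_3\neq0$ under (Sym) or (NB). In the symmetric case, $l=r$, and we expect $\mu_3=\langle r^4\rangle$-type plus a term $\langle\cdot,(1+F)^{-1}\cdot\rangle\ge0$, so $\mu_3>0$ by positivity. For (NB), $l\neq r$. Here we would use the edge-reversal involution $e\mapsto\bar e$. Under the terminal-dependent assumption on $\mathbf a$, this involution relates $S^\top$ to $S$ up to a diagonal conjugation, which lets us express $l$ through $r$ and $m$. We would then try to write $\mu_3$ as a sum of squares plus $\langle x,(I+F)^{-1}x\rangle$. The degree $\ge3$ assumption will be used to show that $-1\notin\Spec(F)$ and that the relevant resonance set is empty, so $(I+F)$ is invertible with the correct sign. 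If a direct sign argument fails, we would fall back on contradiction. Suppose $\mu_3=0$. Then the expansion becomes degenerate, and higher-order terms would force $\im m$ to vanish on a one-sided neighborhood. This contradicts $\tau$ being a shared endpoint of two intervals.

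We expect this nondegeneracy of $\mu_3$ in the (NB) case to be the main obstacle.
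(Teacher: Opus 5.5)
Your architecture matches the paper's. You normalize $F$ by its Perron vectors, invert $I-F$ uniformly off the Perron direction (Proposition~\ref{pro.I-F-second}), and reduce to a scalar cubic whose coefficients are, up to sign conventions, $\mu$, $\sigma=\langle plr^2\rangle$ and $\psi=\langle plr,\frac{I+F}{I-F}\mathcal P[pr^2]\rangle$. Deriving the cubic directly from the exact identity $(I-m^2S)h=m^2\omega+mh\,\omega+mh\,Sh$ at $m=m(\tau)$ is a reasonable route.

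The genuine gap is the step you call the crux: $\psi(\tau)>0$ when $\sigma(\tau)=0$. Your sketches do not close it.
\begin{itemize}
\item \textup{(Sym)}: what enters $\psi$ is the form $\langle x,(I+F)(I-F)^{-1}x\rangle$ on $\Ran\mathcal P$, not $(I+F)^{-1}$. Its positivity gives $\psi>0$ only if $-1\notin\Spec(F(\tau))$, i.e.\ in the primitive case. The theorem assumes only irreducibility. For a bipartite support graph the sign matrix $T$ of the bipartition satisfies $TFT=-F$, so $-1\in\Spec(F(\tau))$ and the alternating vector $Tr$ is a possible null direction.
\item \textup{(NB)}: the same $T$ (with $T_{uv}=\pm1$ according to the side of $v$) gives $-1\in\Spec(F(\tau))$ for every bipartite base graph, whatever the degrees. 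The resonance set always contains $I$ and plays no role where $U(\tau)=I$. Edge reversal $J$ only yields $F^\top=J\Lambda F\Lambda J$ with $\Lambda=D\bigl((|m_{\bar e}|/|m_e|)_{e}\bigr)$. This is not a similarity, so it does not express $l$ through $r$.
\item Fallback: it is unsupported. Once $\mu_2=\mu_3=0$, nothing forces one-sided vanishing; the model $\Theta^4\propto-\omega$ already has a branch with $\im\Theta>0$ on both sides. It also uses none of the structure the statement requires.
\end{itemize}

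What is missing is the Jensen argument of Proposition~\ref{prop:psi-positive-and-nondegenerate}.
\begin{itemize}
\item Set $Q_F=\lambda_F^{-1}D(r)^{-1}FD(r)$, $\pi=lr$, $h=pr$, $\Pi=\mathbf 1\pi^\top$. Then exactly $\psi=|y|_\pi^2-\lambda_F^2|Q_Fy|_\pi^2$ with $y=(I-\lambda_FQ_F)^{-1}(I-\Pi)h$, so $\psi\ge0$.
\item At $\tau$ one has $\lambda_F=1$, and $\sigma=0$ gives $h=(I-Q_F)y$. Equality then holds iff $y$ is constant on the support of every row of $Q_F$.
\item Under \textup{(Sym)}, this makes $y$ constant on distance-two classes.
\item Under \textup{(NB)}, degree $\ge3$ gives $y_{uv}=\varphi_u$, hence $h_{uv}=\varphi_u-\varphi_v$. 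Combine the Perron equation with the Dyson equation, after adding back the backtracking term. Either this forces $m_{uv}m_{vu}=1$, which is incompatible with $m_{uv}h_{uv}>0$, $m_{vu}h_{vu}>0$ and $h_{vu}=-h_{uv}$; or it makes $\varphi$ constant on distance-two classes.
\item For non-bipartite graphs this gives $h=0$, which is impossible.
\item In the bipartite case only the alternating mode survives, and the Perron and Dyson equations force $\tau+a_i=0$ for all $i$. After shifting to $\mathbf a=0$, the reflection symmetry $m(z)=-\overline{m(-\bar z)}$ makes $m(\tau)$ purely imaginary. Then $\im m(\tau)=0$ would give $m(\tau)=0$, contradicting $|m_i|\sim1$.
\end{itemize}
This yields $\psi+\sigma^2\sim1$ near $\tau$, after which the standard branch analysis gives the cubic root.

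Two smaller points.
\begin{itemize}
\item Your argument that $\mu_2\neq0$ at an edge also tacitly uses $\mu_3\neq0$. With $\mu_2=\mu_3=0$, the quartic model has another branch with $\im\Theta>0$ on one side only. So the argument does not justify your claim that the edge part needs no structure. The paper's own proof likewise only establishes $\sigma(\tau)\neq0\Rightarrow$ square-root edge.
\item To show that $\sigma(\tau)\neq0$ is incompatible with a cusp, you need $\im\Theta=0$ exactly on the real-root side. That requires the imaginary-part error bound $|\im e|\lesssim|\omega|\im\Theta$ from the paper's cubic equation. Your error term should also contain $|\omega|^2$.
\end{itemize}
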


For historical context, we briefly recall an earlier square-root result in
a more specialized setting. Nagnibeda and Woess~\cite{random2002nagnibeda}
established square-root behavior at the outer spectral edges for
nearest-neighbor random walks on trees with finitely many cone types and
irreducible cone-type graph.

\medskip
Our main stability theorem controls the inverse of the stability operator
\(I-m(z)^2S\) near edges, cusps, outside the support, and in the strict bulk, which reflects the local singularity scale of the density.
\begin{theorem}[Stability of Solution]\label{thm.stability}
    Suppose $S$ is a nonnegative and irreducible matrix. For \(z=E+\mathrm i\eta\) and a regular boundary point
    \(\tau\in\partial\mathfrak S\), set
    $
        \kappa:=|E-\tau|.
    $
    Then there exists a sufficiently small constant $\epsilon>0$ depending only on model parameters such that the following estimates hold.
    \begin{itemize}
    \item \textbf{Edge Stability.}
    If $\tau$ is a regular edge, then
    \begin{equation}\label{eq.edge-stability}
    \|(I-m(z)^2S)^{-1}\|_2
     \sim\max\big\{1,(\kappa+\eta)^{-1/2}\big\},\qquad z\in \C_+\big([\tau-\epsilon,\tau+\epsilon]\big).
    \end{equation}
    \item \textbf{Off-support Stability} Define the off-support regime $\mathfrak{O}:=\{E\in \R: \dist(E,\mathfrak{S})\ge\epsilon\}$. Then 
    \begin{equation}\label{eq.off-support-stability}
        \|(I-m(z)^2S)^{-1}\|_2\sim 1,\qquad z\in\C_+(\mathfrak{O}).
    \end{equation}
    \end{itemize}
    \vspace{-3mm}
    In addition, assume that \(S\) satisfies \(\textup{(Sym)}\) or \(\textup{(NB)}\). Then the following estimates hold.
    \begin{itemize}
        \item \textbf{Cusp Stability.} If $\tau$ is a regular cusp, then
        \begin{equation}\label{eq.cusp-stability}
          \|(I-m(z)^2S)^{-1}\|_2\lesssim  \max\big\{1,(\kappa+\eta)^{-2/3}\big\},\qquad z\in \C_+\big([\tau-\epsilon,\tau+\epsilon]\big).
        \end{equation}
        \item \textbf{Bulk Stability.} Define the strict bulk $\mathfrak{B}:=\{E\in\mathfrak{S}: \dist\bigl(E,\partial\mathfrak{S}\bigr)\ge \epsilon\}$. If the relevant graph is non-bipartite or $\mathbf{a}\notin \R\mathbf 1$, then
        \begin{equation}\label{bulk-stability}
          \|(I-m(z)^2S)^{-1}\|_2\sim 1,\qquad z\in\C_+(\mathfrak{B}).
        \end{equation}
        In the exceptional case where the relevant graph is bipartite and $\mathbf{a}=a\mathbf{1}$ for some scalar $a\in\R$, one has
        \begin{equation}\label{bulk-stability-improved}
          \|(I-m(z)^2S)^{-1}\|_2\sim \max\big\{1,|z+a|^{-1}\big\},\qquad z\in\C_+(\mathfrak{B}).
        \end{equation}
        Here, in the \(\textup{(NB)}\) case the relevant graph is the base graph
        \(\mathcal G\), while in the \(\textup{(Sym)}\) case it is the underlying
        graph associated with \(S\).
  \end{itemize}
\end{theorem}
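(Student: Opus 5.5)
By \eqref{eq.I-m2S-transform} and Theorem~\ref{thm.bound}, the diagonal factors there have norms and inverse norms $\sim 1$ on the relevant regions. So $\|(I-m^2S)^{-1}\|_2\sim\|R(z)^{-1}\|_2$, and everything reduces to bounding the smallest singular value of $R=U-F$ from below. Most of these regions avoid $\mathcal A$, so $m$ is uniformly bounded there, and Theorem~\ref{thm.regularity} lets us pass to boundary values on the real axis. The plan is to locate a one-dimensional \emph{instability direction} and show that $R$ is uniformly invertible on its complement.

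Let $\lambda=\lambda_{\max}(F)$, and let $l,r>0$ be its Perron--Frobenius eigenvectors from Lemma~\ref{lem.PF}.

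\textbf{Step 1: relating $\lambda$ to $\im m$.} Take the imaginary part of \eqref{eq:QVE} divided appropriately:
\[
\frac{\im m}{|m|}=\eta|m|+F\frac{\im m}{|m|}.
\]
Pairing with $l$ gives $(1-\lambda)\langle l,\im m/|m|\rangle=\eta\langle l,|m|\rangle$. Hence $1-\lambda\sim\eta/\langle\im m\rangle$ wherever $\langle \im m\rangle$ is positive, and $1-\lambda\gtrsim 1$ off the support.

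\textbf{Step 2: the gap away from the Perron direction.} Normalize $F$ to the Markov matrix $P=D(r)^{-1}FD(r)/\lambda$. By Lemma~\ref{lem:Q-second-singular-gap-lambda} and Proposition~\ref{pro.I-F-second}, $I-F$ is bounded below by a constant on a complement of $\spn\{r\}$. The exception is the imprimitive case, where the other peripheral eigenvalues $\lambda e^{2\pi\ii k/d_*}$ from Lemma~\ref{lem:cyclic-decomp} also have to be tracked.

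\textbf{Step 3: perturbing $U$ off the identity.} Write $U=I+(U-I)$, with $|U-I|\lesssim\im m/|m|$ up to constants depending on $\re m$. Apply a Schur-complement / Feshbach reduction onto the near-kernel direction(s).

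\emph{Off-support.} Here $\im m\to0$, so $U\approx\pm I$ componentwise, while $1-\lambda\gtrsim1$ by Step 1. One must still rule out $-1$ phases aligned with $-\lambda$ on periodic components. This is done using $\re m\ne0$ and the continuity from Theorem~\ref{thm.regularity} together with compactness over the finitely many intervals of $\mathfrak O$ near the support. Far from it, $|m|\lesssim 1/|z|$ makes $m^2S$ small, so the bound is immediate.

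\emph{Edge.} The reduced scalar quantity is
\[
\langle l, (U-F) r\rangle \approx (1-\lambda)+\ii\,\langle l,\im u\, r\rangle+\langle l,(\re U-1)r\rangle ,
\]
which is $\sim \langle\im m\rangle+\eta/\langle\im m\rangle$. The lower bound $\|\cdot\|\gtrsim$ follows by testing on $r$.

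The square-root behaviour of $\langle\im m\rangle\sim(\kappa+\eta)^{1/2}$ outside and $\sim\kappa^{1/2}$ inside comes from Theorem~\ref{thm:singularities}. Using it, we get $\|R^{-1}\|\sim (\kappa+\eta)^{-1/2}$: inside the support the leading term is $\im m\sim\kappa^{1/2}$, and outside it is $\eta/\langle \im m\rangle+\kappa^{1/2}$. Non-degeneracy of the quadratic coefficient was established in the proof of the edge singularity.

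\emph{Cusp.} The same reduction applies, but the quadratic coefficient vanishes at the cusp. The cubic coefficient is nondegenerate under (Sym)/(NB), which is the resonance analysis behind Theorem~\ref{thm:singularities}. The reduced quantity is then $\gtrsim \langle\im m\rangle^2+\ldots\gtrsim(\kappa+\eta)^{2/3}$, which gives only an upper bound, matching \eqref{eq.cusp-stability}.

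\textbf{Step 4: bulk, the main obstacle.} In the bulk $\im m\sim1$, so $\lambda<1$ is not close to 1. Even so, $U$ has genuine phases and $U-F$ could be singular: the cyclic permutation example of Appendix~\ref{app.Counterexamples} shows this really happens. I would argue by contradiction and compactness over $\overline{\mathfrak B}$ (after first handling $\eta\gtrsim1$ trivially). Suppose $s_{\min}(R)\to0$. Passing to limits on the real axis, there is a unit vector $x$ with $Ux=Fx$.

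Taking moduli gives $|x|\le F|x|$. Combined with $\lambda\le1$ and irreducibility, this forces $\lambda=1$, which contradicts Step 1 in the bulk unless $\eta\to0$ with $\langle\im m\rangle>0$. So actually we first get $|x|\propto r$ exactly at $\lambda=1$; since $\lambda\to1$ as $\eta\to0$ in the bulk, this is not immediately contradictory. Equality in the triangle inequality forces the phases $\theta_i$ of $x$ to satisfy the resonance condition $\bar u_i\theta_i=\theta_j$ for every edge $(i,j)\in E_S$, i.e.\ the phase of $m_i^{-2}$ times the phase transport must be consistent around every cycle of $\mathcal G_S$. This is exactly the resonance set of Definition~\ref{def.resonance set}.

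By Lemma~\ref{lem:UminusQ-graph-resonance} and Propositions~\ref{prop.bulk-stability-symmetric} and \ref{pro.bulk-stability-NB}, for symmetric $S$, or for non-backtracking $S$ with minimal degree at least three, this consistency forces $m_i^2$ to be real, contradicting $\im m\sim1$. The only escape is the bipartite case with $\mathbf a=a\mathbf 1$. There the symmetry $m(-\bar z-2a)=-\overline{m(z)}$ produces the resonance exactly at $z=-a$. A local expansion of the reduced scalar in $z+a$ then gives $\|R^{-1}\|\sim|z+a|^{-1}$, with the matching lower bound obtained by testing on the resonant vector.

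Throughout, the matching lower bounds $\gtrsim1$ are trivial because $\|I-m^2S\|\lesssim1$.
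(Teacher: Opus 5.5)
\textbf{Verdict: there is a genuine gap in the exceptional bipartite case.} Your edges, cusps and non-exceptional bulk follow the paper's route: a Feshbach reduction onto the Perron pair $r,l$ with $\beta=\mu\eta/\delta-2\ii\sigma\delta+2(\psi-\sigma^2)\delta^2+O(\delta^3+\eta)$, and the resonance set via Lemma~\ref{lem:UminusQ-graph-resonance} and Propositions~\ref{prop.bulk-stability-symmetric}, \ref{pro.bulk-stability-NB}. The bound $\|(I-m^2S)^{-1}\|_2\sim\max\{1,|z+a|^{-1}\}$, however, is asserted rather than proved, and the ``local expansion of the reduced scalar in $z+a$'' fails as stated.

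Write $m(-a)=\ii q$. Let $T$ be the $\pm1$ diagonal sign of the bipartition, so that $TST=-S$, and let $l_0>0$ be the left Perron vector of $q^2S$. The linearization $I+q^2S$ at $-a$ has kernel $Tq$, so there is no implicit-function expansion of $m$ at $-a$; a priori $m$ is only H\"older there.

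If you reduce onto the frozen vector $Tq$, the reduced scalar is, up to normalization, $-(Tl_0)^\top\bigl(m(z)^2-m(-a)^2\bigr)S\,Tq$ plus a Schur correction of order $|m(z)-m(-a)|_2^2$. To conclude you would need three things you do not have:
\begin{itemize}
\item the first-order term of $m(z)-m(-a)$;
\item a proof that this term does not degenerate;
\item better than $\tfrac12$-H\"older control, so that the correction is $o(|z+a|)$.
\end{itemize}
Even your matching bound $s_{\min}\lesssim|z+a|$ fails if you test on $Tq$, since $|(I-m^2S)Tq|_2$ is only $O(|m(z)-m(-a)|_2)$.

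The missing idea is the exact identity $\bigl(I-m(z)^2S\bigr)Tm(z)=-(z+a)\,Tm(z)^2$, which follows from $TST=-S$ and the Dyson equation. It gives an explicit near-kernel vector $Tm(z)$ whose residual is exactly linear in $z+a$, and it needs only continuity of $m$ at $-a$. Pairing with the left kernel vector $Tl_0$ produces the coefficient $l_0^\top q^2>0$. The paper then gets $s_{\min}\gtrsim|z+a|$ by splitting $\C^d=\spn\{Tm(z)\}\oplus X$ (Proposition~\ref{prop.bipartite-origin-instability}).

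\textbf{Other steps that need correcting.}

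\emph{Bulk mechanism.} Resonance does not in general force $m_i^2$ to be real. For (NB) the resonance set contains all vertex potentials $\overline{\omega_x}\omega_y$ (Lemma~\ref{lem.resonant-set-B}), and on bipartite graphs the alternating phases $\gamma,\bar\gamma$ are resonant. Moreover, $m_i^2\in\R$ would not contradict $\im m\sim1$, since at the exceptional point $m$ is purely imaginary. The contradiction in the propositions you cite comes from feeding the resonant phase structure back into the Dyson equation. This yields $|\tau+a_i|\sin\theta_*\lesssim\dist(U,\mathscr R_S)+\eta$; for (NB) it yields alignment $\theta_{yk}\approx\phi_y$ of the outgoing phases, followed by an odd-cycle argument.

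\emph{Off the support.} $U_{ii}=\overline{m_i}/m_i\to1$ for real $m_i$ of either sign, so $U\to I$, not $\pm I$, and the worry about $-1$ phases is vacuous. Likewise the imprimitive case is no exception in your Step~2: Lemma~\ref{lem:Q-second-singular-gap-lambda} already controls the cyclic modes through $c_{\rm cyc}$. However, $1-\lambda\gtrsim1$ does not follow from Step~1 alone. You need $\im m\lesssim\eta$ there, i.e.\ boundedness of $m_i'(\tau)=\int\tilde v_i(\dd\omega)/(\omega-\tau)^2$. That is exactly the input of the paper's argument with $u=m'/|m|$ and $(I-F)u=|m|$.

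\emph{Edge.} On the gap side $\langle\im m\rangle\sim\eta/\sqrt{\kappa+\eta}$, not $(\kappa+\eta)^{1/2}$. There $|\beta|\sim(\kappa+\eta)^{1/2}$ comes from the term $\mu\eta/\delta$.

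\emph{Cusp.} You need the sign $\psi\ge0$ (Proposition~\ref{prop:psi-positive-and-nondegenerate}), not merely nondegeneracy of $\psi$. Otherwise $2\psi\delta^2$ could cancel $\mu\eta/\delta>0$ when $\sigma$ is small.
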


Theorem~\ref{thm.stability} gives a complete description of the stability behavior in the
symmetric case and in the non-backtracking matrix case for $z$ away from the singular points. 
Actually, besides the example in Appendix~\ref{app.Counterexamples}, \eqref{bulk-stability-improved} gives another example of the loss of bulk stability for a general matrix.
In the symmetric case, a more complete characterization of the uniform boundedness of
the solution is available in~\cite[Section 6]{AEK2019quadratic}, which in turn yields a complete description of the
singular set and makes the stability picture fully explicit. Since the main focus of this paper is the genuinely non-symmetric non-backtracking case, we do not discuss this additional symmetric theory in detail. We also note that, in the bipartite exceptional case, uniform boundedness near \(z=-a\) can occur only when the two parts of the bipartition have the same number of vertices; otherwise a delta mass appears at \(z=-a\); see \cite[Example 7.3]{AvniBreuerSimon2020} and \cite[Section 3.1]{BanksGarzaVargasMukherjee2022} for details.

\section{Decomposition, boundedness and regularity}\label{sec.decomposition}

We prove Theorem \ref{thm.decompose of measure}, Theorem \ref{thm.bound} and Theorem \ref{thm.regularity} in this section. We first introduce the following Puiseux expansion lemma, which gives us the explicit expansion of solution $m(z)$.

\begin{lemma}[Proof of Proposition 4.2 in \cite{AnantharamanSabri2019}]\label{lem:puiseux}
  Suppose $S\in\R^{d\times d}$ is nonnegative and $m(z)$ is the solution to Dyson equation \eqref{eq:QVE}. Every $m_i(z)$ has a convergent \emph{Puiseux expansion} in a neighborhood of any $\tau\in\R$. More precisely, for $z$ sufficiently close to $\tau$,
  \begin{equation}
    \qquad m_i(z)=\sum_{k\ge k_0} c_k(z-\tau)^{k/n},\qquad i=1,\dots, d
  \end{equation}
  for some $k_0\in\mathbb Z$ and some $n\in\N_+$, where the coefficients \(c_k\in\mathbb C\).
\end{lemma}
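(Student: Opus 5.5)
The plan is to show that each $m_i$ is an algebraic function of $z$, and then to deduce the Puiseux expansion from the classical theory of algebraic functions. Clearing denominators in \eqref{eq:QVE}, the pair $(z,m)$ satisfies the polynomial system
\[
P_i(z,m):=1+m_i\Big(z-a_i+\sum_{j}s_{ij}m_j\Big)=0,\qquad i=1,\dots,d.
\]
Let $V\subset\mathbb C^{1+d}$ be the complex algebraic variety cut out by these equations. For fixed $z$ the fibre is the solution set of $d$ quadratic equations in $d$ unknowns.

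First I will show that, for generic $z$, the fibre is finite, so that $V$ has a one-dimensional component $V_0$ containing the graph of $m$ over $\mathbb C_+$. The natural tool is the implicit function theorem. The Jacobian $\partial P/\partial m$ at the true solution equals $D(1/m)\cdot\big(-(I-m^2S)\big)$ up to diagonal factors, since $\partial_{m}\,(-1/m - Sm)$ gives $D(m^{-2})-S$. This is invertible for $\eta$ large, because there $|m|\le\eta^{-1}$ by \eqref{eq.rough-boundedness}, and hence $\|m^2S\|<1$. Consequently the graph of $m$ over $\{\eta>\eta_0\}$ is a smooth one-dimensional analytic piece of $V$. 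It lies in a unique irreducible curve component $V_0$, and the projection $V_0\to\mathbb C_z$ is nonconstant.

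Next I eliminate $m_1,\dots,m_{i-1},m_{i+1},\dots,m_d$, using resultants or a Gröbner basis, or more abstractly by projecting $V_0$ to the $(z,m_i)$-plane. The image is an irreducible plane algebraic curve with defining polynomial $Q_i(z,w)\not\equiv0$, and $Q_i(z,m_i(z))=0$ for $\eta>\eta_0$. By the identity theorem this then holds on all of $\mathbb C_+$, since $m_i$ is analytic there by Theorem~\ref{thm:Existence and uniqueness}. Some care is needed to ensure $Q_i$ genuinely depends on $w$: if the projection to the $z$-line were constant this would contradict nonconstancy, and if the image were a vertical line $\{z=c\}$ that would contradict the fact that $z$ varies.

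The final step is the Newton--Puiseux theorem. Write $Q_i(z,w)=\sum_k q_k(z)w^k$ with leading coefficient $q_N$. Near any $\tau\in\mathbb R$, the roots $w$ of $Q_i$ form finitely many branches, each given by a convergent Puiseux series in $(z-\tau)^{1/n}$ on a punctured disc; negative exponents are allowed, which accounts for $q_N(\tau)=0$. The branches can be continued analytically along any path in a slit punctured disc. The analytic function $m_i$ on the half-disc $\{|z-\tau|<r\}\cap\mathbb C_+$ satisfies $Q_i=0$ there, and $r$ can be chosen to avoid the finitely many discriminant zeros other than $\tau$. Hence $m_i$ coincides with one branch on an open set, and by the identity theorem it coincides with that branch on the whole upper half-disc. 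This gives the expansion with some $k_0\in\mathbb Z$ and $n\in\mathbb N_+$. Finiteness of the discriminant zero set, which holds because $Q_i$ is squarefree after reducing to its irreducible factor, ensures the radius is positive.

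The main obstacle is the algebraicity step. One must make sure the component $V_0$ containing the true branch is really a curve, and not a higher-dimensional component on which the elimination might give $Q_i\equiv0$. The local invertibility of the Jacobian for large $\eta$ handles this, since a smooth one-dimensional germ of $V$ with nonzero Jacobian forces the local dimension of $V$ at those points to be one. I would argue this by noting that the Jacobian criterion gives $\dim_{p}V\le 1$ at such points $p$.
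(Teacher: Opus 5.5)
Your proof is correct. It takes the same route as the argument the paper imports from Anantharaman--Sabri; the paper itself gives no proof and only cites that reference. The route is to show that each $m_i$ is an algebraic function of $z$ by eliminating variables from the polynomial system $1+m_i(z-a_i+(Sm)_i)=0$, then apply Newton--Puiseux on a punctured disc and identify $m_i$ with a single branch on the upper half-disc. Your step using the invertibility of the Jacobian $-D(1/m)(I-m^2S)$ for large $\eta$ supplies the nondegeneracy that makes the elimination rigorous: it forces the component through the graph of $m$ to be a curve, so that $Q_i\not\equiv 0$ and $Q_i$ depends on $w$.
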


\bigskip
\begin{proof}[Proof of Theorem \ref{thm.decompose of measure}]

\textit{Step 1: Finite singular points.}
We first show that the possible singular points are finite. Fix $\tau\in\R$. By Lemma \ref{lem:puiseux}, for $z$ in a neighborhood of $\tau$, we have
\begin{equation}\label{eq.mj-puiseux}
   m_j(z)=\sum_{k\ge k_0}c_k(z-\tau)^{k/n}.
\end{equation}
 Consequently, the boundary value
$
  \lim_{\eta\downarrow0} m_j(\tau+\mathrm i\eta)
$
exists for every \(\tau\), possibly with value \(+\infty\). Moreover, in this neighborhood it can be infinite only at \(\tau\). 
Thus the set $\mathcal{A}_j$ is discrete. 
Since \([-\Sigma,\Sigma]\) can be covered by finitely many such neighborhoods, and each of them contains at most one point of \(\mathcal A_j\), the set \(\mathcal A_j\) is finite.
\medskip

\textit{Step 2: Well-defined boundary values.} 
For every $\tau\notin\mathcal{A}_j$, we have $\lim_{\eta\downarrow 0}|m_j(\tau+\mathrm i\eta)|<\infty$. 
This implies that the Puiseux expansion of \(m_j\) at \(\tau\) cannot contain any negative power. 
Otherwise, the leading negative-power term would dominate along \(z=\tau+\mathrm i\eta\) and force \(|m_j(\tau+\mathrm i\eta)|\to\infty\), contradicting \(\tau\notin\mathcal A_j\).
Hence the expansion has the form
\begin{equation}\label{eq.nonnegative-puiseux}
    m_j(z)=\sum_{k\ge0}c_k(z-\tau)^{k/n}  
\end{equation}
possibly after changing the index. Consequently, the limit in \eqref{def.mj} exists and is finite, and the resulting boundary values define the unique continuous extension of \(m_j\) to \(\overline{\mathbb C_+}\).

Next we show $\mathcal{A}_j=\big\{\tau\in\mathbb R:
  \lim_{\eta\downarrow0}\im m_j(\tau+\mathrm i\eta)=\infty
  \big\}$. For every \(\tau\in\mathcal A_j\), the preceding analysis in this step shows
  that the leading power in the Puiseux expansion of \(m_j\) at \(\tau\) is
  $
  \gamma:=k_0/n<0.
  $
  Along \(z=\tau+r e^{\mathrm i\theta}\),
the leading term of \eqref{eq.mj-puiseux} gives
\[
  \im m_j(\tau+r e^{\mathrm i\theta})
  =
  r^\gamma \im \bigl(c_{k_0}e^{\mathrm i\gamma\theta}\bigr)
  +o(r^\gamma).
\]
Since \(\im m_j(z)>0\) when $z\in \Cp$, 
we must have $\im \bigl(c_{k_0}e^{\mathrm i\gamma\theta}\bigr)\geq 0$ if $0<\theta<\pi$. Thus it follows that if \(\theta=\pi/2\), then $\im \bigl(c_{k_0}e^{\mathrm i\gamma\pi/2}\bigr)> 0$. Since \(\gamma<0\) we have
\[
  \im m_j(\tau+\mathrm i\eta)
  =
  \eta^\gamma
  \im \bigl(c_{k_0}e^{\mathrm i\gamma\pi/2}\bigr)
  +o(\eta^\gamma)
  \to\infty.
\]
  which implies $\mathcal{A}_j\subset\big\{\tau\in\mathbb R:
  \lim_{\eta\downarrow0}\im m_j(\tau+\mathrm i\eta)=\infty
  \big\}$. The other direction is trivial as $\im m_j\le |m_j|$, so we have $\mathcal{A}_j=\big\{\tau\in\mathbb R:
  \lim_{\eta\downarrow0}\im m_j(\tau+\mathrm i\eta)=\infty
  \big\}.$

\medskip

\textit{Step 3: Measure decomposition.}
We first identify the possible atoms. For any \(\tau_0\in\mathbb R\), the Stieltjes representation gives
\[
  \eta\,\im m_j(\tau_0+\mathrm i\eta)
  =
  \int_{\mathbb R}
  \frac{\eta^2}{(x-\tau_0)^2+\eta^2}\,
  \widetilde v_j(\dd x).
\]
The integrand is bounded by \(1\) and converges pointwise to
\(\mathbf 1_{\{x=\tau_0\}}\). Hence, the dominated convergence theorem implies that
\begin{equation}\label{eq.delta-mass-weight}
  \lim_{\eta\downarrow0}
  \eta\,\im m_j(\tau_0+\mathrm i\eta)
  =
  \widetilde v_j(\{\tau_0\}).
\end{equation}
In particular, if \(\widetilde v_j(\{\tau_0\})>0\), then
$
  \im m_j(\tau_0+\mathrm i\eta)
  \ge \eta^{-1}\widetilde v_j(\{\tau_0\})
  \to \infty ,
$
and therefore every atom of \(\widetilde v_j\) belongs to \(\mathcal A_j\).
Since \(\mathcal A_j\) is finite, we may define
\[
  \{\tau_{j,k}\}_{k=1}^{N_j}
  :=
  \left\{
    \tau\in\mathcal{A}_j:
    \widetilde v_j(\{\tau\})>0
  \right\},
\]
and set
$
  \Delta_{j,k}:=\widetilde v_j(\{\tau_{j,k}\})>0 .
$
The preceding identity \eqref{eq.delta-mass-weight} gives the first part of \eqref{def.vj-Delta}.

Define the atom-removed measure $\nu_j$ and its Stieltjes transform as follows:
\begin{equation}\label{def.nuj}
  \nu_j  :=  \tilde{v}_j  -  \sum_{k=1}^{N_j}\Delta_{j,k}\delta_{\tau_{j,k}},\qquad m_j^{\mathrm{ac}}(z)  := \int_{\R} \frac{\nu_j(\dd \tau)}{\tau-z}= m_j(z)  -  \sum_{k=1}^{N_j}  \frac{\Delta_{j,k}}{\tau_{j,k}-z}.
\end{equation}
Then, it follows from definition \eqref{def.vj-Delta} that 
\begin{equation}\label{def.hat-vj}
  v_j(\tau):=\frac{1}{\pi}\lim_{\eta\downarrow 0}\im m^{\mathrm{ac}}_j(\tau+\mathrm i\eta)\in[0,\infty],
\end{equation}
where the limit is understood in the extended sense. By the Puiseux expansion, this limit exists for every \(\tau\). 
Moreover, on every compact interval \(I\Subset\mathbb R\setminus\mathcal A_j\), the convergence in \eqref{def.hat-vj} is uniform for \(\tau\in I\), and \( v_j\) is finite on \(I\).
Therefore, interchanging the limit and the integration in the Stieltjes inversion formula yields, for any interval
\((a,b)\subset\mathbb R\setminus\mathcal A_j\),
\[
  \nu_j((a,b))
  =
  \lim_{\eta\downarrow0}
  \frac1\pi\int_a^b
  \im m^{\mathrm{ac}}_j(\tau+\mathrm i\eta)\,\dd\tau
  =
  \int_a^b v_j(\tau)\,\dd\tau ,
\]
which implies that $\nu_j|_{\R\backslash\mathcal{A}_j}(\dd\tau)= v_j(\tau)\dd\tau$.

Thus any singular part of \(\nu_j\) must be supported on the finite set \(\mathcal{A}_j\). 
However, all atoms of \(\tilde{v}_j\) have been removed in the definition of \(\nu_j\). 
Since \(\mathcal{A}_j\) is finite and all atoms have been removed, we have
$
\nu_j(\mathcal{A}_j)=0.
$
Therefore \(\nu_j\) has no singular part, and $\nu_j(\dd\tau)  = v_j(\tau)\,\dd\tau$.
Combining \eqref{def.nuj}, \eqref{eq.measure-decomposition} holds directly.

Next, we prove the continuity of \(v_j\) on \(\mathfrak S_j\). 
Fix \(\tau_0\in\mathfrak S_j\). Since \(0<v_j(\tau_0)<\infty\) and 
\(\tau_0\) is not an atom, we have \(\tau_0\notin\mathcal{A}_j\). 
Since $\mathcal A_j$ is finite, one may work in a compact neighborhood of $\tau_0$ which avoids $\mathcal A_j$. 
The continuity of $\tau\mapsto \im m_j^{\mathrm{ac}}(\tau+\mathrm i\eta)$ on this neighborhood, together with the uniform convergence in \eqref{def.hat-vj}, implies that $v_j$ is continuous at $\tau_0$.
Since \(\tau_0\in\mathfrak S_j\) was arbitrary, \(v_j\) is continuous on \(\mathfrak S_j\).

\medskip

\textit{Step 4: Support decomposition.}
For every $\tau\notin\mathcal{A}$, $\tau$ cannot be an atom for any $\tilde{v}_j$, thus $\im m_j(\tau)=\pi v_j(\tau)$. 
Moreover, the previous results give that the boundary value $m_j(\tau)$ is well defined and finite for every $1\le j\le d$. 
Taking imaginary parts in the Dyson equation \eqref{eq:QVE} gives, for \(z=\tau+\mathrm i\eta\),
\begin{equation}\label{eq:QVE-imag-bound}
  \frac{\im m_j(z)}{|m_j(z)|^2}
  =
  \eta+\sum_{k=1}^d s_{jk}\im m_k(z),\qquad j=1,\dots,d.
\end{equation}
We claim that $m_j(\tau)\neq 0$ for every $1\le j\le d$. Otherwise, the left-hand side of the
\(j\)-th Dyson equation diverges as \(\eta\downarrow0\). This forces the existence
of some \(k\) with \(S_{jk}>0\) such that
$
   \lim_{\eta\downarrow 0}|m_k(\tau+\mathrm i\eta)|=\infty ,
$
contradicting \(\tau\notin\mathcal A\).

We next prove the all-or-nothing property of the imaginary parts. 
For every \(\tau\notin\mathcal A\), letting \(\eta\downarrow0\) in \eqref{eq:QVE-imag-bound} and using
\(\im m_j(\tau)=\pi v_j(\tau)\), we obtain
\begin{equation}\label{eq:boundary-imag-equation}
  \frac{v_j(\tau)}{|m_j(\tau)|^2}
  =
  \sum_{k=1}^d s_{jk}v_k(\tau),\qquad j=1,\dots,d.
\end{equation}
Since we have shown that, for all $\tau\notin \mathcal{A}$, \(|m_j(\tau)|\) is finite and non-zero, 
the left-hand side of the above equality is well defined

Define
$
  Z(\tau):=\{j: v_j(\tau)=0\,\}.
$
If \(j\in Z(\tau)\), then the left-hand side of
\eqref{eq:boundary-imag-equation} vanishes. Hence
$
  \sum_{k=1}^d s_{jk}v_k(\tau)=0.
$
Since \(s_{jk}\ge0\) and \(v_k(\tau)\ge0\), this implies
\[
  s_{jk}>0
  \quad\Longrightarrow\quad
  v_k(\tau)=0.
\]
Thus \(Z(\tau)\) is closed under all outgoing edges of the directed graph
associated with \(S\). Since \(S\) is irreducible, the graph is strongly
connected, and no nonempty proper subset can be closed under outgoing edges.
Hence, we have
$
  Z(\tau)=\varnothing
$
or
$ 
  Z(\tau)=\{1,\dots,d\}.
$
Consequently, for every $\tau\notin\mathcal A$, either $v_j(\tau)>0$ for all $j=1,\dots,d$, or $v_j(\tau)=0$ for all $j=1,\dots,d$. 
Therefore, the sets \(\mathfrak S_j\) are independent of \(j\).

Since \(v_j\) is continuous on \(\mathfrak{S}_j=\mathfrak S\), it follows from the definition \eqref{def.Sj} that \(\mathfrak S\) is open.
Moreover, by the Puiseux expansion, the set
$
  \{x\in\mathbb R:0<v_j(x)<\infty\}
$
is a finite union of intervals.
Therefore the set \(\mathfrak S\) has only finitely many boundary points in
each such neighborhood. Since \(\widetilde v_j\) is supported in the compact
interval \([-\Sigma,\Sigma]\), a finite covering argument yields that
\(\partial\mathfrak S\) is finite. Hence \(\mathfrak S\) is a finite union of
open intervals.
\end{proof}

\bigskip
\begin{proof}[Proof of Theorem \ref{thm.bound}]
    Since the proof of Theorem~\ref{thm.bound} is largely parallel to the argument in
    \cite[Section~5]{AEK2019quadratic}, we only point out the minor modification
    needed here. In \cite{AEK2019quadratic}, \(S\) is assumed to be primitive, whereas
    we only use irreducibility. Thus, instead of invoking positivity of a fixed power
    of \(S\), we use
    \[
    \im m\gtrsim \sum_{t=1}^{d-1}S^t\im m .
    \]
    Indeed, irreducibility implies that for any \(i\ne j\) there is a directed path
    from \(i\) to \(j\) of length at most \(d-1\). Since positive entries of \(S\) are
    uniformly bounded from below, every entry of \(\sum_{t=1}^{d-1}S^t\) is bounded
    from below by a model parameter. Hence
    $
    (\im m)_i\gtrsim \langle \im m\rangle
    $ for $i=1,\dots,d$.
    This replaces the use of primitivity in \cite{AEK2019quadratic}; the remaining
    steps are unchanged.    
\end{proof}

\bigskip
\begin{proof}[Proof of Theorem \ref{thm.regularity}]
    The uniform boundedness of \(m\) on $\Cp(I)$ implies \(I\cap\mathcal A=\varnothing\). 
    Hence $\tilde v_i$ is purely absolutely continuous, namely $\tilde v_i(\dd\tau)=v_i(\tau)\dd\tau$, with $v_i(\tau)<\infty$ for all $\tau\in I$. 
    Moreover, by \eqref{def.vj-Delta} and \eqref{def.mj}, we have $v_i(\tau)=\pi^{-1}\im m_i(\tau)$ for every $\tau\in I$. 
    Thus the comparability \eqref{eq.v-comparable} follows directly from \eqref{eq.m-Im}. 
    It remains to prove the uniform H\"older continuity of $m(z)$ and $v(\tau)$.
    
    Since $m$ is uniformly bounded on $\Cp(I)$, repeating the argument leading to \eqref{eq.nonnegative-puiseux}, we obtain the following Puiseux expansion in a neighborhood of $\tau$
    \[
      m_i(z)=\sum_{k\ge0}c_k(z-\tau)^{k/n}.
    \]
    We next upgrade the one-point Puiseux expansion to a two-point Hölder estimate. By the Puiseux expansion, after shrinking the neighborhood of \(\tau\) if necessary, we may write
    \[
      m_i(z)=\sum_{k\ge 0} c_k (z-\tau)^{k/n}
      =F_\tau\bigl((z-\tau)^{1/n}\bigr),
    \]
where
$
  F_\tau(\zeta):=\sum_{k\ge 0} c_k\zeta^k
$
is analytic in a neighborhood of \(0\). Here the branch of
\((z-\tau)^{1/n}\) is chosen consistently in the local Puiseux
neighborhood.

Since \(F_\tau\) is analytic, it is Lipschitz on a sufficiently small
compact neighborhood of \(0\). Hence there exists a constant
\(L_\tau>0\) such that
$
  |F_\tau(\zeta)-F_\tau(\xi)|
  \le L_\tau |\zeta-\xi|
$
for all \(\zeta,\xi\) in this neighborhood. Therefore, for \(z,w\)
sufficiently close to \(\tau\),
\[
\begin{aligned}
  |m_i(z)-m_i(w)|
  =
  \left|
    F_\tau\bigl((z-\tau)^{1/n}\bigr)
    -
    F_\tau\bigl((w-\tau)^{1/n}\bigr)
  \right| 
  \le
  L_\tau
  \left|
    (z-\tau)^{1/n}-(w-\tau)^{1/n}
  \right|.
\end{aligned}
\]
Moreover, the map \(\zeta\mapsto \zeta^{1/n}\) is \(1/n\)-Hölder on
the chosen sector. Thus we have
\[
  \left|
    (z-\tau)^{1/n}-(w-\tau)^{1/n}
  \right|
  \le C |z-w|^{1/n},
\]
which implies
\[
  |m_i(z)-m_i(w)|
  \le C_\tau |z-w|^{1/n}
\]
for all \(z,w\) in a sufficiently small neighborhood of \(\tau\).

Thus the Puiseux expansion yields a genuine local two-point Hölder
estimate, not merely the one-point estimate with base point \(\tau\).
Taking a finite cover of the compact interval $I$ under consideration and
letting \(\alpha\) be the minimum of the corresponding exponents, we
obtain
\[
  |m_i(z)-m_i(w)|\le C |z-w|^\alpha
\]
locally uniformly near the real axis. In particular, since
\(v_i(\tau)=\pi^{-1}\im m_i(\tau)\), the same Hölder estimate holds for
the density \(v_i\) on the real axis.

It remains to show the Hölder continuity away from the real axis. Fix
\(\eta_0>0\) and set
\[
  \C_{\eta_0}:=\{z\in\mathbb C_+:\eta\ge \eta_0\}.
\]
By the Stieltjes representation \eqref{def.v},
we have
\[
  \partial_z m_i(z)=
  \int_{\mathbb R}\frac{\tilde v_i(\dd \tau)}{(\tau-z)^2},\qquad z\in \C_{\eta_0}.
\]
Hence it follows that
\[
  |\partial_z m_i(z)|
  \le
  \int_{\mathbb R}\frac{\tilde v_i(\dd \tau)}{|\tau-z|^2}
  \le
  \frac{\tilde v_i(\mathbb R)}{\eta_0^2}.
\]
Thus \(m_i\) is Lipschitz on \(\C_{\eta_0}\). Indeed, for
\(z,w\in \C_{\eta_0}\), the line segment joining \(z\) and \(w\) remains
inside \(\C_{\eta_0}\), and therefore
\[
  |m_i(z)-m_i(w)|
  \le
  \sup_{\zeta\in \C_{\eta_0}}|\partial_z m_i(\zeta)|\, |z-w|
  \le
  C_{\eta_0}|z-w|.
\]
Since \(\alpha\in(0,1]\), this implies the \(\alpha\)-Hölder estimate
for \(|z-w|\le 1\):
\[
  |m_i(z)-m_i(w)|
  \le
  C_{\eta_0}|z-w|
  \le
  C_{\eta_0}|z-w|^\alpha .
\]
If \(|z-w|>1\), then the uniform boundedness of \(m_i\) gives
\[
  |m_i(z)-m_i(w)|
  \le
  2\|m_i\|_\infty
  \le
  2\|m_i\|_\infty |z-w|^\alpha .
\]
Consequently, we have
\[
  |m_i(z)-m_i(w)|
  \le
  C |z-w|^\alpha,
  \qquad z,w\in \C_{\eta_0}.
\]
Thus \(m_i\) is uniformly \(\alpha\)-Hölder continuous away from the real
axis. Combining this estimate with the local Hölder estimate obtained
near the real axis from the Puiseux expansion yields the desired uniform
\(\alpha\)-Hölder continuity of \(m_i\).

\end{proof}

\section{Stability and Singular Behavior at Edges and Cusps}
This section studies the stability of the Dyson equation near spectral edges and cusps, together with the resulting square-root and cubic-root singular behavior of the density.
Although we cannot directly get information about the singular value and singular vector of $F$ as it is asymmetric, the lemma below is still very useful in our following analysis since it contains the information about the Perron-Frobenius eigenvalue and eigenvector.

\begin{lemma}\label{lem:F-PF}
  Suppose $S$ is nonnegative and irreducible, and the solution $m(z)$ is uniformly bounded on $\C_+(I)$, where $I$ is a closed interval. Let $z\in\Cp(I)$ and the matrix $F(z)$ be as in \eqref{def.F}.
  Then there exists an eigenvalue $\lambda_F(z)=\rho(F(z))$ and corresponding eigenvectors $r(z),l(z)>0$ such that
  \begin{equation}\label{eq:left-right-eigenvector}
    F(z)r(z)=\lambda_F(z)r(z),\qquad l(z)^\top F(z)=\lambda_F(z) l(z)^\top.
  \end{equation}
  Throughout the rest of the paper, we choose the unique normalization of these left and right eigenvectors such that
  $\langle l(z) r(z)\rangle=1$ and $|r(z)|_2=1$.
  
  Moreover, we have
  \begin{equation}\label{eq.F-eigenvalue}
    \lambda_F(z)=1-\frac{\eta\langle l(z)|m(z)|\rangle}{\langle l(z)|m(z)|^{-1}\im m(z)\rangle}\le 1,
  \end{equation}
  and
  \begin{equation}\label{eq.vector-comparable}
    l_i(z)\sim r_i(z)\sim 1,\qquad i=1,\dots,d.
  \end{equation}
\end{lemma}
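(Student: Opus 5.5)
The plan is to apply the Perron--Frobenius theorem to \(F(z)\), derive the eigenvalue formula from the imaginary part of the Dyson equation, and deduce comparability of the eigenvector entries from irreducibility together with two-sided bounds on the entries of \(F(z)\).

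\emph{Step 1: Perron--Frobenius setup.} For \(z\in\Cp(I)\), Theorem~\ref{thm.bound} gives \(|m_i(z)|\sim 1\) (since \(|z|\) is bounded on the relevant region, or after absorbing the factor \(1+|z|\) into the constants). Hence \(F(z)=|m|S|m|\) has the same support graph as \(S\) and is irreducible and nonnegative. Lemma~\ref{lem.PF} then yields the eigenvalue \(\lambda_F(z)=\rho(F(z))>0\) and positive left and right eigenvectors \(l(z)\) and \(r(z)\). Positivity gives \(\langle l r\rangle>0\), so the normalization \(|r|_2=1\), \(\langle lr\rangle=1\) is possible. It is unique because the Perron eigenvalue is simple.

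\emph{Step 2: eigenvalue identity.} Divide the imaginary part of the Dyson equation, \eqref{eq:QVE-imag-bound}, by \(|m|\) componentwise and set \(f:=\im m/|m|>0\). This gives
\[
f=\eta|m|+F f.
\]
Now pair with \(l\): from \(\langle l f\rangle=\eta\langle l|m|\rangle+\lambda_F\langle lf\rangle\) we get
\[
(1-\lambda_F)\langle l|m|^{-1}\im m\rangle=\eta\langle l|m|\rangle .
\]
Since \(\im m>0\) on \(\Cp\), the bracket on the left is positive, which gives \eqref{eq.F-eigenvalue}. The inequality \(\lambda_F\le1\) holds because the right-hand side is nonnegative.

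\emph{Step 3: comparability of the eigenvectors.} This is the main obstacle, since \(\lambda_F\) could in principle be small and the eigenvector entries must be controlled uniformly in \(z\). The first task is a lower bound \(\lambda_F\gtrsim 1\). Write \(f=\eta|m|+Ff\) and use Theorem~\ref{thm.bound}, which gives \(\im m_i\sim\im m_j\), so that \(f_i\sim\langle f\rangle\). Then
\[
f\ge Ff\gtrsim \min_{ij:S_{ij}>0}S_{ij}\,\langle f\rangle\,\mathbf 1
\]
in the rows where \(S\) has an entry. Taking Collatz--Wielandt quotients \(\min_i (Ff)_i/f_i\gtrsim1\) then gives \(\lambda_F\ge\min_i (Ff)_i/f_i\gtrsim 1\); every row of an irreducible \(S\) with \(d\ge2\) is nonzero. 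With this in hand, consider
\[
\sum_{t=1}^{d-1}F^t r=\Big(\sum_{t=1}^{d-1}\lambda_F^t\Big) r .
\]
The entries of \(\sum_t F^t\) are \(\gtrsim1\), as in the proof of Theorem~\ref{thm.bound}, and \(\lambda_F\le 1\). It follows that every \(r_i\gtrsim \langle r\rangle\gtrsim |r|_2=1\). Also \(r_i\le |r|_2=1\), so \(r_i\sim1\). The same argument for \(l^\top\) gives \(l_i\sim\langle l\rangle\), and the normalization \(\langle lr\rangle=1\) with \(r_i\sim1\) forces \(\langle l\rangle\sim1\), hence \(l_i\sim 1\). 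All constants depend only on \(\Phi_I\), \(d\), the positive entries of \(S\), and \(|\mathbf a|\), which makes them model parameters.
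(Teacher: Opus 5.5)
Your Steps~1 and~2 are exactly the paper's argument. You apply Perron--Frobenius to \(F\), which has the support of \(S\). Then \(f=\im m/|m|\) satisfies \(f=\eta|m|+Ff\), which comes from \emph{multiplying} (not dividing) \eqref{eq:QVE-imag-bound} by \(|m|\), and you pair with \(l\). Step~3 uses the paper's idea of summing powers of \(F\). As written, however, it is uniform only for bounded \(|z|\), and \(\Cp(I)\) is not bounded: it contains every \(z\) with \(\re z\in I\) and arbitrarily large \(\eta\). On that region Theorem~\ref{thm.bound} gives \(|m_i|\sim(1+|z|)^{-1}\), not \(|m_i|\sim1\), so \(F\sim(1+|z|)^{-2}S\) entrywise. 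This has two consequences.
\begin{itemize}
\item Your claimed lower bound \(\lambda_F\gtrsim1\) is false on \(\Cp(I)\). Indeed \(\lambda_F\le\max_i\sum_jF_{ij}\lesssim(1+|z|)^{-2}\to0\) as \(\eta\to\infty\).
\item The entries of \(\sum_{t\le d-1}F^t\) are only \(\gtrsim(1+|z|)^{-2(d-1)}\). Combining them with \(\lambda_F\le1\) yields merely \(r_i\gtrsim(1+|z|)^{-2(d-1)}\langle r\rangle\), which is not uniform.
\end{itemize}
``Absorbing \(1+|z|\) into the constants'' does not repair this as stated. After rescaling \(F\) by \((1+|z|)^2\), the eigenvalue bound you need is \((1+|z|)^2\lambda_F\lesssim1\), not \(\lambda_F\le1\).

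The paper avoids eigenvalue bounds altogether. It uses the rescaled matrix \(T(z)=\sum_{k}(1+|z|)^{2k}F(z)^k\), whose entries are \(\sim1\) uniformly on \(\Cp(I)\). The paper sums to \(k=d-1\); summing up to \(k=d\) makes the diagonal entries positive as well, since every vertex lies on a cycle of length at most \(d\). The vectors \(r\) and \(l\) are eigenvectors of \(T\) for some eigenvalue \(\rho(z)>0\), so comparability becomes a ratio statement. Since \((Tr)_i\sim\langle r\rangle\) for every \(i\), you get \(r_i/r_j=(Tr)_i/(Tr)_j\sim1\), and likewise for \(l\). The size of \(\lambda_F\) plays no role, so your Collatz--Wielandt detour can be dropped; your own conclusion never used it, since only \(\lambda_F\le1\) entered. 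The normalizations \(|r|_2=1\) and \(\langle lr\rangle=1\) then finish the proof exactly as you wrote. For \(|z|\lesssim1\) your argument is fine as it stands.
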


\begin{proof}
  Since \(F(z)\) has the same zero pattern as \(S\), \(F(z)\) is nonnegative and irreducible. Then, Lemma \ref{lem.PF} directly implies \eqref{eq:left-right-eigenvector}.
  Then, taking imaginary parts in \eqref{eq:QVE} and multiplying both sides componentwise by $|m|$ yield
  \begin{equation}\label{eq.take-imaginary-Multiplying}
    \frac{\im m}{|m|}=\eta|m|+F\frac{\im m}{|m|}.
  \end{equation}
  Multiplying both sides by $l(z)^\top$  yields 
  \begin{equation*}
    \langle l(z)\frac{\im m}{|m|}\rangle=\eta\langle l(z)|m|\rangle+\lambda_F(z) \langle l(z)\frac{\im m}{|m|}\rangle.
  \end{equation*}
  Rearranging the above equality implies \eqref{eq.F-eigenvalue}.

  By \eqref{eq.m-bound} and the definition of $F(z)=(f_{ij})_{i,j=1}^d$ in \eqref{def.F},  we have
  \begin{equation}\label{eq.Fij}
    f_{ij}(z)=|m_i(z)| s_{ij}|m_j(z)|\sim \frac{s_{ij}}{(1+|z|)^2},
  \qquad 1\le i,j\le d.
  \end{equation}
  Hence, for every fixed $k\ge 1$, one has
  $
  F(z)^k \sim (1+|z|)^{-2k} S^k
  $
  entrywise.
  Since $S$ is nonnegative and irreducible, 
  for any two distinct indices \(i,j\) there exists a directed path from \(i\) to \(j\). Choosing a shortest such path, it has no repeated vertices; otherwise one could remove a cycle and obtain a shorter path. Therefore its length is at most \(d-1\). 
  Hence
  $
  \sum_{k=1}^{d-1} S^k
  $
  has strictly positive entries. Therefore, define
  \[
  T(z)=(t_{ij}(z))_{i,j=1}^d:=\sum_{k=1}^{d-1} (1+|z|)^{2k} F(z)^k ,
  \]
  then we have
  $
  t_{ij}(z)\sim 1.
  $

  Since $r(z)$ and $l(z)$ are right and left eigenvectors of $F(z)$, they are also right and left eigenvectors of $T(z)$. Thus, there exists $\rho(z)>0$ such that
  \[
  T(z)r(z)=\rho(z)r(z),\qquad l(z)^\top T(z)=\rho(z)l(z)^\top.
  \]
  Then, it follows from $t_{ij}(z)\sim 1$ that $r_i(z)\sim r_j(z)$ and $l_i(z)\sim l_j(z)$.
  Finally, using the normalization $\langle l(z)r(z)\rangle=1$ together with $|r(z)|_2=1$, we obtain \eqref{eq.vector-comparable}.
\end{proof}

\bigskip

We shall repeatedly need a quantitative inverse bound for Markov-type
matrices away from their invariant direction. The following lemma provides such
a bound for \(I-\lambda Q\), uniformly for \(0<\lambda\le 1\), after projecting
out the Perron direction \(\mathbf 1\). Its main role is to control the inverse
of the stability operator on the stable subspace.

This lemma will be used directly in Proposition~\ref{pro.I-F-second}. Indeed,
after normalizing \(F\) into a Markov matrix \(Q_F\), the estimate below implies
a uniform bound on \((I-F)^{-1}\mathcal P\), where \(\mathcal P\) removes the
unstable Perron direction. This separation between the unstable
one-dimensional direction and the uniformly stable complement is the basic input
for the subsequent perturbative expansion near edges and cusps.

\begin{lemma}\label{lem:Q-second-singular-gap-lambda}
Let \(Q=(q_{ij})_{i,j=1}^d\in \R^{d\times d}\) be a nonnegative irreducible matrix such that $Q\mathbf 1=\mathbf 1$. Let $\pi$ denote its unique invariant probability vector, and define
the projection matrix $P:=I-\mathbf{1}\pi^\top$.
Then there exist constants \(C>0\) and $N\in\N_+$ depending only on the support of $Q$, such that for every \(\lambda\in(0,1]\),
\begin{equation}
      \|(I-\lambda Q)^{-1}P\|_2\le C\kappa_Q^{-N},
\end{equation}
where $\kappa_Q:=\min\{q_{ij}:q_{ij}>0\}$.
\end{lemma}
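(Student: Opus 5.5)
The plan is to reduce everything to a determinant lower bound on the invariant subspace $\Ran P=\{x:\pi^\top x=0\}$. First I would check that $P$ commutes with $Q$: since $Q\mathbf 1=\mathbf 1$ and $\pi^\top Q=\pi^\top$, we get $QP=Q-\mathbf 1\pi^\top=PQ$. Hence $\Ran P$ is $Q$-invariant and $\mathbb C^d=\spn\{\mathbf 1\}\oplus\Ran P$. On this splitting, $I-\lambda Q$ is block diagonal, acting as $1-\lambda$ on $\mathbf 1$ and as $M_\lambda:=(I-\lambda Q)|_{\Ran P}$ on the complement. So $(I-\lambda Q)^{-1}P$ is interpreted as $M_\lambda^{-1}P$, which also makes sense at $\lambda=1$.

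By Lemma~\ref{lem.PF}, the eigenvalue $1$ of $Q$ is simple. Therefore the spectrum of $Q|_{\Ran P}$ consists of the remaining eigenvalues $\mu_2,\dots,\mu_d$ (with multiplicity), each with $|\mu_k|\le 1$ and $\mu_k\neq 1$.

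Next, write $M_\lambda$ in an orthonormal basis of $\Ran P$ and use the singular-value form of Cramer's rule:
\[
\|M_\lambda^{-1}\|_2\le \frac{\|M_\lambda\|_2^{d-2}}{|\det M_\lambda|}.
\]
The numerator is harmless. Since $0\le q_{ij}\le 1$, we have $\|Q\|_2\le d$, so $\|M_\lambda\|_2\le 1+d$. Also $\|P\|_2\le 1+\sqrt d$. It therefore suffices to bound
\[
|\det M_\lambda|=\prod_{k\ge2}|1-\lambda\mu_k|
\]
from below by $c\,\kappa_Q^{N}$, uniformly in $\lambda\in(0,1]$.

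\emph{Reduction to $\lambda=1$.} For $|\mu|\le 1$ and $\lambda\in(0,1]$, I claim
\[
|1-\lambda\mu|\ge \tfrac14|1-\mu|^2 .
\]
If $\re\mu\ge0$, then $|1-\lambda\mu|\ge 1-\lambda\re\mu\ge 1-\re\mu$. Moreover $|1-\mu|^2=1-2\re\mu+|\mu|^2\le 2(1-\re\mu)$, which gives the claim. If $\re\mu<0$, then $|1-\lambda\mu|\ge1\ge|1-\mu|^2/4$. Consequently
\[
|\det M_\lambda|\ge 4^{-d}\prod_{k\ge2}|1-\mu_k|^2 .
\]

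\emph{The main step: a lower bound at $\lambda=1$.} This is the step where the quantitative dependence on $\kappa_Q$ enters. I would use the fact that $\prod_{k\ge2}(1-\mu_k)$ equals the derivative at $0$ of the characteristic polynomial of the Laplacian-type matrix $L:=I-Q$. Equivalently, it is the sum of the principal $(d-1)\times(d-1)$ minors of $L$. The matrix $L$ has zero row sums and nonpositive off-diagonal entries. By the Markov chain tree theorem (directed matrix-tree theorem), each such minor, with row and column $i$ deleted, equals the sum over spanning arborescences rooted at $i$ of the products of the $d-1$ edge weights $q_{jk}$. In particular every minor is nonnegative.

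Irreducibility of $Q$ guarantees at least one spanning arborescence for each root $i$. Each term is a product of $d-1$ positive entries, each at least $\kappa_Q$. Hence
\[
\prod_{k\ge2}(1-\mu_k)\ge d\,\kappa_Q^{d-1}.
\]
Combining the estimates yields
\[
\|(I-\lambda Q)^{-1}P\|_2\le (1+\sqrt d)(1+d)^{d-2}4^{d}\,\kappa_Q^{-2(d-1)}.
\]
This is the claimed bound with $N=2(d-1)$ and $C$ depending only on $d$, hence only on the support. The point requiring care is justifying the tree expansion with signs, namely that all the principal minors are nonnegative so there is no cancellation. I would verify this by the standard Kirchhoff argument for the weighted directed Laplacian.
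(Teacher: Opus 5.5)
Your proof is correct, and it takes a genuinely different route from the paper's.

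\emph{The paper's route.} The paper works in the weighted space \(L^2(\pi)\). It splits \(\Ran P\) into two \(\pi\)-orthogonal, \(Q\)-invariant pieces. The first is the periodic part \(\mathcal E_0\), made of Fourier modes that are constant on the cyclic classes, on which \(|1-\lambda\omega^k|\ge c_{\rm cyc}\). The second is the mixing space \(\mathcal F\). A Doeblin-type minorization \(Q_k^N\ge\kappa_Q^{Nd_*}\) on each cyclic block gives a strict contraction of \(Q^{Nd_*}\) on \(\mathcal F\). This is transferred to \(I-\lambda Q\) through \(I-(\lambda Q)^n=(I-\lambda Q)\sum_{j<n}(\lambda Q)^j\) and \(\|\lambda Q\|_\pi\le 1\). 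Finally the paper pays a factor \((\max_i\pi_i/\min_i\pi_i)^{1/2}\) to return to \(|\cdot|_2\).

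\emph{Your route.} You avoid all of this dynamics.
\begin{itemize}
\item The bound \(s_{\min}\ge|\det|/s_{\max}^{d-2}\) on the \((d-1)\)-dimensional invariant space reduces everything to \(\prod_{k\ge2}|1-\lambda\mu_k|\).
\item The elementary inequality \(|1-\lambda\mu|\ge\frac14|1-\mu|^2\) gives uniformity in \(\lambda\) at no cost.
\item The Markov chain tree theorem gives \(\prod_{k\ge2}(1-\mu_k)=\sum_i\det L^{(i)}\ge d\,\kappa_Q^{d-1}\). This by itself re-proves that the eigenvalue \(1\) is simple.
\end{itemize}

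\emph{What each route buys.} Your argument needs neither the cyclic decomposition nor the primitivity index of the blocks \(Q_k\). The roots of unity coming from periodicity are absorbed by the same product bound. You also get the explicit exponent \(N=2(d-1)\), whereas the paper's exponent is tied to the primitivity index of the \(Q_k\).

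The cost is that the determinant route is intrinsically lossy. It controls \(s_{\min}\) only through the product of all singular values, which produces the prefactor \((1+d)^{d-2}4^d\). This is harmless here, since \(d\) is a fixed model parameter and \(\kappa_{Q_F}\sim 1\) in Proposition~\ref{pro.I-F-second}.

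The paper's route, in contrast, yields a structural mixing estimate in the natural \(L^2(\pi)\) geometry. Its Markov-normalization and Jensen-contraction machinery is reused later, in the positivity of \(\psi\) and in the resonance lemma.

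One cosmetic point: \(\prod_{k\ge2}(1-\mu_k)\) is the derivative at \(0\) of \(\det(tI+L)\). For \(\det(tI-L)\) you pick up a factor \((-1)^{d-1}\). Your principal-minor formulation is exactly right as stated.
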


\begin{remark}
  For \(\lambda=1\), we interpret \((I-Q)^{-1}P\) as the inverse of \(I-Q\) restricted to \(\Ran P=\{x:\pi^\top x=0\}\).
\end{remark}

\begin{proof}
  We first justify that the vector \(\pi\) used in the definition of \(P\) is well defined.
  Since \(Q\ge 0\) is irreducible and \(Q\mathbf 1=\mathbf 1\), Lemma \ref{lem.PF} implies that \(1\) is the spectral radius of \(Q\), the eigenspace of \(Q\) corresponding to the eigenvalue \(1\) is one-dimensional, and there exists a unique invariant probability vector \(\pi\in \R^d\) such that
  \[
  \pi_i>0,\qquad \sum_{i=1}^d \pi_i=1,\qquad \pi^\top Q=\pi^\top .
  \]

\smallskip
\textit{Step 1: }
Since \(Q\mathbf 1=\mathbf 1\) and \(\pi^\top Q=\pi^\top\), the projection \(P=I-\mathbf 1\pi^\top\) commutes with \(Q\). Hence \(\operatorname{Ran}P\) is \(Q\)-invariant.
Therefore, it suffices to show that there exist constants $c>0$ and $N\in\N_+$ depending only on the support of $Q$ such that
\begin{equation}\label{eq.min-singular}
  \inf_{x\in \Ran P}\frac{|(I-\lambda Q)x|_2}{|x|_2}\ge c\kappa_Q^{N} .
\end{equation}
We define the weighted norm $ |x|_\pi^2:=\sum_{i=1}^d \pi_i |x_i|^2$ with the corresponding weighted inner product $\langle x,y\rangle_\pi:=\sum_{i=1}^{d}\pi_i \overline {x_i}y_i$.
By definition, the norms \(|\cdot|_\pi\) and \(|\cdot|_2\) have the following relation
\begin{equation}\label{eq:pi-euclid-equiv-lambda}
(\min_i\pi_i)^{1/2}|x|_2\le |x|_\pi \le (\max_i\pi_i)^{1/2}|x|_2,\qquad x\in \C^d.
\end{equation}

For any edge \((i,j)\), i.e. \(q_{ij}>0\), the invariance relation
\(\pi^\top Q=\pi^\top\) gives
$
  \pi_j\ge \pi_i q_{ij}.
$
Since \(Q\) is irreducible, for any \(i,j\) there exists a directed path
\[
  i=i_0\to i_1\to\cdots\to i_\ell=j .
\]
Choosing a shortest such path, it has no repeated vertices; otherwise one could
remove a cycle and obtain a shorter path. Hence the path contains at most \(d\)
vertices, and therefore \(\ell\le d-1\). Iterating the
above estimate along this path yields
\[
  \pi_j
  \ge \pi_i \prod_{r=0}^{\ell-1}q_{i_r i_{r+1}}
  \ge \pi_i \kappa_Q^\ell
  \ge \pi_i \kappa_Q^{d-1},
\]
which implies 
$
  (\max_i\pi_i)/(\min_i\pi_i)\le \kappa_Q^{-(d-1)} .
$
Thus, passing from \(|\cdot|_\pi\) to \(|\cdot|_2\) loses at most a factor \(\kappa_Q^{-(d-1)/2}\), which can be absorbed by increasing the exponent \(N\).

We shall prove that there exist constants \(c>0\) and $N\in\N_+$ depending only on the support of \(Q\), such that
\begin{equation}\label{eq:pi-gap-on-X-lambda}
\inf_{x\in\Ran P}\frac{|(I-\lambda Q)x|_\pi }{|x|_\pi}\ge c\kappa_Q^N.
\end{equation}
Once \eqref{eq:pi-gap-on-X-lambda} is proved, the Euclidean statement \eqref{eq.min-singular} follows immediately from \eqref{eq:pi-euclid-equiv-lambda}.  
Thus it remains to prove \eqref{eq:pi-gap-on-X-lambda}.

\medskip
\noindent\textit{Step 2: Period subspace.}
Let \(d_*\) denote the period of \(Q\). By Lemma \ref{lem:cyclic-decomp}, there exists a partition
$
\{1,\dots,d\}=\mathcal{C}_1\sqcup\cdots\sqcup \mathcal{C}_{d_*}
$
such that
\[
q_{ij}>0, i\in \mathcal{C}_k
\quad\Longrightarrow\quad
j\in \mathcal{C}_{k+1},
\qquad k=1,\dots,d_*.
\]

We next show that the invariant measure \(\pi\) assigns the same total mass to each cyclic class. More precisely,
\begin{equation}\label{eq:equal-mass-lambda}
\pi(\mathcal{C}_1)=\cdots=\pi(\mathcal{C}_{d_*})=\frac1{d_*}.
\end{equation}
Indeed, using \(\pi^\top Q=\pi^\top\) and the cyclic support property of \(Q\), we compute
\[
\pi(\mathcal{C}_{k+1})
=
\sum_{i\in \mathcal{C}_{k+1}}\pi_i
=
\sum_{i\in \mathcal{C}_{k+1}}\sum_{j=1}^d \pi_j q_{ji}
=
\sum_{j\in \mathcal{C}_{k}}\pi_j \sum_{i\in \mathcal{C}_{k+1}}q_{ji}
=
\sum_{j\in \mathcal{C}_{k}}\pi_j
=
\pi(\mathcal{C}_{k}).
\]
Since the classes \(\mathcal{C}_1,\dots,\mathcal{C}_{d_*}\) form a partition of \(\{1,\dots,d\}\), \eqref{eq:equal-mass-lambda} follows.

\medskip
The first component of the argument isolates the directions coming purely from the periodic structure. Define
\begin{equation}\label{def.G}
  \mathcal E:=\Bigl\{x\in \C^d:\ x|_{\mathcal{C}_{k}}\equiv c_{k} \text{ for some }c_{k}\in \C,\ k=1,\dots,d_*\Bigr\},
\end{equation}
where \(\mathcal E\) consists of vectors that are constant on each cyclic class. 
We assume $d_*\ge 2$ in the remaining part of this step, and the case \(d_*=1\) will be covered in Step~3.
Let $\omega:=e^{2\pi i/d_*}.$
For \(j=1,\dots,d_*\), define \(e^{(j)}\in \C^d\) by
$
e^{(j)}_i:=\omega^{-(j-1)(k-1)}$
where $i\in \mathcal{C}_{k}$.
Then \(e^{(j)}\in \mathcal E\). Moreover, if \(i\in \mathcal{C}_{k}\), then by the cyclic support property all nonzero entries in the \(i\)-th row of \(Q\) come from columns indexed by \(\mathcal{C}_{k+1}\). It follows from \(Q\mathbf 1=\mathbf 1\) that
\[
\forall i\in\mathcal{C}_k,\qquad
(Qe^{(j)})_i
=
\sum_{n\in \mathcal{C}_{k+1}} q_{in} e^{(j)}_n
=
\omega^{-(j-1)k}\sum_{n\in \mathcal{C}_{k+1}}q_{in}
=
\omega^{-(j-1)k}
=
\omega^{-(j-1)} e^{(j)}_i.
\]
Therefore, we have 
$
Qe^{(j)}=\omega^{-(j-1)} e^{(j)}$ for  $j=1,\dots,d_*
$.

Next we check that they are orthogonal in $\langle\cdot,\cdot\rangle_\pi$. Using \eqref{eq:equal-mass-lambda}, it follows that
\[
\langle e^{(i)},e^{(j)}\rangle_\pi
=
\sum_{k=1}^{d_*} \pi(\mathcal{C}_{k})\overline{\omega^{-(i-1)(k-1)}}\omega^{-(j-1)(k-1)}
=
\frac1{d_*}\sum_{k=1}^{d_*} \omega^{(i-j)(k-1)}=\delta_{ij},
\]
which implies that \(\{e^{(j)}\}_{j=1}^{d_*}\) is an orthogonal basis of \(\mathcal E\). 

\smallskip
Define $\mathcal{E}_0:=\spn\{e^{(2)},\dots,e^{(d_*)}\}\subset\mathcal{E}$. For any $u\in\mathcal{E}_0$, since $\{e^{(j)}\}_{j=2}^{d_*}$ is an orthogonal basis of $\mathcal{E}_0$, 
we can write
$
u=\sum_{j=2}^{d_*} u_{j} e^{(j)}.
$
Using $Qe^{(j)}=\omega^{-(j-1)} e^{(j)}$ gives
\[
(I-\lambda Q)u=\sum_{j=2}^{d_*}(1-\lambda\omega^{-(j-1)})u_{j} e^{(j)}.
\]
Because the vectors \(\{e^{(j)}\}_{j=2}^{d_*}\) are orthogonal with respect to \(\langle\cdot,\cdot\rangle_\pi\), \(|(I-\lambda Q)u|^2_\pi\) is simply the square sum of the coefficients:
\begin{equation}\label{eq:cyc-gap-lambda}
    \qquad |(I-\lambda Q)u|_\pi^2=\sum_{j=2}^{d_*}|1-\lambda\omega^{-(j-1)}|^2 |u_j|^2 |e^{(j)}|_\pi^2\ge c^2_{\rm cyc} |u|^2_\pi,\qquad u\in\mathcal{E}_0.
\end{equation}
where
\begin{equation*}
    c_{\rm cyc}:=\min_{1\le k\le d_*-1}\ \inf_{\lambda\in(0,1]}|1-\lambda\omega^k|.  
\end{equation*}

\medskip
\noindent\textit{Step 3: Mixing subspace.}
We now pass to the complementary directions, where the periodic oscillations
have been removed. 
Define
\[
  \mathcal F
  :=
  \Bigl\{
  x\in\C^d:\ \sum_{i\in\mathcal C_k}\pi_i x_i=0,
  \ k=1,\dots,d_*
  \Bigr\}.
\]
Recall the definition of $\mathcal{E}$ in \eqref{def.G}.
Then, it follows from the definition that $\mathcal{F}=\mathcal E^{\perp_\pi}$ where \(\perp_\pi\) denotes the orthogonal complement with respect to $\langle\cdot,\cdot\rangle_\pi$.
Hence we have
$
  \C^d=\mathcal E\oplus_{\perp_\pi}\mathcal F .
$
Moreover, since \(e^{(1)}=\mathbf 1\) and
$
  \mathcal E=\operatorname{span}\{\mathbf 1\}
  \oplus_{\perp_\pi}\mathcal E_0,
$
we also have
$
  \operatorname{Ran}P
  =
  \{\pi^\top x=0\}
  =
  \mathcal E_0\oplus_{\perp_\pi}\mathcal F .
$
Thus the estimate on \(\operatorname{Ran}P\) splits into the parts
\(\mathcal E_0\) and \(\mathcal F\). The periodic
part \(\mathcal E_0\) has already been treated in Step 2. It remains
to handle the complementary space \(\mathcal F\).

We also claim that \(\mathcal F\) is \(Q\)-invariant. This is important, because it allows us to study \(I-\lambda Q\) separately on \(\mathcal E_0\) and on \(\mathcal F\). Let \(x\in \mathcal F\). Then for each \(1\le k\le d_*\),
\[
\sum_{i\in \mathcal{C}_{k}}\pi_i (Qx)_i
=
\sum_{i\in \mathcal{C}_{k}}\pi_i \sum_{j\in \mathcal{C}_{k+1}}q_{ij}x_j
=
\sum_{j\in \mathcal{C}_{k+1}}x_j \sum_{i\in \mathcal{C}_{k}}\pi_i q_{ij}
=
\sum_{j\in \mathcal{C}_{k+1}}\pi_j x_j
=
0,
\]
where in the penultimate step we used \(\pi^\top Q=\pi^\top\) together with the cyclic support property. Hence \(Qx\in \mathcal F\), proving the claim.

For each \(k=1,\dots,d_*\), let \(\pi_k\) be the normalized restriction of \(\pi\) to \(\mathcal{C}_{k}\):
$\pi_k(i):=d_*\pi_i$ for all $i\in \mathcal{C}_{k}$.
Let $Q_k:=Q^{d_*}|_{\mathcal{C}_{k}}$. Lemma \ref{lem:cyclic-decomp} implies that \(Q_k\) is a primitive matrix satisfying $Q_k\mathbf{1}=\mathbf{1}$, with invariant probability vector \(\pi_k\). 

We next prove a quantitative contraction for a suitable power of \(Q_k\) on the zero-mean subspace
$
L_0^2(\pi_k):=\{f\in L^2(\pi_k): \langle f,\mathbf 1\rangle_{\pi_k}=0\}.
$
Since \(Q_k\) is primitive for $1\le k\le d_*$, there exists an integer \( N\ge 1\) depending only on the support of \(Q\) such that
$
Q_k^{N}\ge \kappa_Q^{Nd_*}
$
entrywise for all \(k\). 

Define
\[
\vartheta_k:=\min_{i,j\in \mathcal C_k}\frac{(Q_k^{N})_{ij}}{\pi_k(j)},\qquad \Pi_k:=\mathbf{1}\pi_k^\top.
\]
By \(Q^N_k\ge\kappa_Q^{Nd_*}\) and \(0<\pi_k(j)\le 1\), we have $\vartheta_k\ge \kappa_Q^{Nd_*}$. 
If $\vartheta_k=1$, then $Q_k^N=\Pi_k$, which directly implies the desired contraction. Thus, we always consider $\vartheta_k<1$.
Decompose $Q_k^{N}$ as follows
\[
Q_k^{N}=\vartheta_k \Pi_k+(1-\vartheta_k)J_k,
\]
where
\[
(J_k)_{ij}:=\frac{(Q_k^{N})_{ij}-\vartheta_k\pi_k(j)}{1-\vartheta_k}\ge 0.
\]
By construction, we always have $J_k\mathbf{1}=\mathbf{1}$ and  $\pi_k^\top J_k=\pi_k^\top$.
In particular, Jensen's inequality implies that \(J_k\) is a contraction on \(L^2(\pi_k)\):
$
|J_k f|_{\pi_k}\le |f|_{\pi_k}
$.

Now let \(f\in L_0^2(\pi_k)\). Since \(\Pi_k f=0\), the above decomposition yields
$
Q_k^{N} f=(1-\vartheta_k)J_k f,
$
and therefore
$
|Q_k^N f|_{\pi_k}
=
|Q_k^{N} f|_{\pi_k}
\le
(1-\vartheta_k)|f|_{\pi_k}.
$
Thus \(Q_k^N\) is a strict contraction on \(L_0^2(\pi_k)\), with a quantitative gap
$
\vartheta_k\ge\kappa_Q^{Nd_*}.
$
Define
\[
\vartheta_*:=\min_{1\le k\le d_*}\vartheta_k\ge\kappa_Q^{Nd_*},
\qquad
\theta_*:=1-\vartheta_*\le 1-\kappa_Q^{Nd_*}.
\]
Then for every \(1\le k\le d_*\) and every \(f\in L_0^2(\pi_k)\), it follows that
$
|Q_k^N f|_{\pi_k}\le \theta_* |f|_{\pi_k}.
$

Now take \(f\in \mathcal F\), and decompose it according to the cyclic classes:
\[
f=f_1+\cdots+f_{d_*},
\qquad \supp f_k\subset \mathcal C_k.
\]
Since \(f\in\mathcal F\), each \(f_k\) has zero \(\pi_k\)-mean, hence \(f_k\in L_0^2(\pi_k)\). Therefore we have
\[
|Q^{Nd_*}f|_\pi^2
=
\frac1{d_*}\sum_{k=1}^{d_*}|Q_k^N f_k|_{\pi_k}^2
\le
\frac{\theta_*^2}{d_*}\sum_{k=1}^{d_*}|f_k|_{\pi_k}^2
=
\theta_*^2 |f|_\pi^2,
\]
which implies that 
\[
|(I-\lambda^{Nd_*}Q^{Nd_*})f|_\pi\ge |f|_\pi-|Q^{Nd_*}f|_\pi\ge (1-\theta_*)|f|_\pi
=\vartheta_*|f|_\pi.
\]
Then the factorization $I-A^n=(I-A)\sum_{k=0}^{n-1}A^k$ and $\|\lambda Q\|_\pi\le 1$ yield
\[
 \vartheta_*|f|_\pi\le|(I- (\lambda Q)^{Nd_*})f|_\pi
 \le
 |(I- \lambda Q)f|_\pi
 \left\|\sum_{j=0}^{Nd_*-1}(\lambda Q)^j\right\|_\pi
 \le Nd_*|(I- \lambda Q)f|_\pi,
\]
where $\|\cdot\|_\pi$ denotes matrix norm from $L^2(\pi)$ to $L^2(\pi)$, induced by $|\cdot|_\pi$.
Therefore, we have
\begin{equation}\label{eq:mix-gap-lambda}
\forall f\in \mathcal F,\qquad |(I-\lambda Q)f|_\pi \ge \frac{\vartheta_*}{Nd_*}|f|_\pi\ge \frac{\kappa_Q^{Nd_*}}{Nd_*}|f|_\pi .
\end{equation}

\medskip
\noindent\textit{Step 4: Combining the periodic and mixing estimates.}
We now combine the estimates on the periodic and mixing subspaces. First note
that the cyclic gaps are strictly positive. Indeed, since for every $1\le k\le d_*-1$ none of the rays \(\{\lambda\omega^k:0<\lambda\le 1\}\) contains the point \(1\), compactness gives
$
  c_{\rm cyc}>0.
$
Thus the constants coming from the cyclic part are positive. Enlarging \(N\)
if necessary, we may assume
$
  \kappa_Q^{Nd_*}/(N d_*)\le c_{\rm cyc}.
$
Combining the mixing estimate \eqref{eq:mix-gap-lambda}, the cyclic estimates
\eqref{eq:cyc-gap-lambda}, and the space
decomposition established at the beginning of Step~3, we complete the proof.

\end{proof}

\begin{proposition}\label{pro.I-F-second}
Suppose $S$ is nonnegative and irreducible, and the solution $m(z)$ is uniformly bounded on $\C_+(I)$. Define the projection matrix $\mathcal{P}(z):=I-r(z)l(z)^\top$ where $r(z)$ and $l(z)$ are defined in \eqref{eq:left-right-eigenvector}. Then, one has
\begin{equation}
  \big\|(I-F(z))^{-1}\mathcal{P}(z)\big\|_2\lesssim 1,\qquad z\in\overline{\Cp}(I).
\end{equation}
where \(F(z)\) is defined in \eqref{def.F}. 
\end{proposition}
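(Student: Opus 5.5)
The plan is to conjugate $F(z)$ by the diagonal matrix built from the right Perron vector, which turns it into a Markov matrix, and then apply Lemma~\ref{lem:Q-second-singular-gap-lambda}. Set $D_r:=D(r(z))$ and
\[
Q_F(z):=\lambda_F(z)^{-1}D_r^{-1}F(z)D_r .
\]
Then $Q_F\ge0$ is irreducible with the same support as $S$, and $Q_F\mathbf 1=\lambda_F^{-1}D_r^{-1}Fr=\mathbf 1$. Its invariant probability vector is $\pi=l r$, that is $\pi_i=l_ir_i$, because $(lr)^\top D_r^{-1}FD_r=l^\top F D_r=\lambda_F (lr)^\top$ and $\langle lr\rangle=1$. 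The projection of Lemma~\ref{lem:Q-second-singular-gap-lambda} is $P=I-\mathbf 1\pi^\top$. Since $D_rPD_r^{-1}=I-r(lr)^\top D_r^{-1}=I-rl^\top=\mathcal P$, we have $\mathcal P=D_rPD_r^{-1}$, and also
\[
I-F=D_r\,(I-\lambda_F Q_F)\,D_r^{-1}.
\]
It follows that $(I-F)^{-1}\mathcal P=D_r(I-\lambda_FQ_F)^{-1}PD_r^{-1}$, interpreted on $\Ran\mathcal P$ exactly as in the remark following Lemma~\ref{lem:Q-second-singular-gap-lambda}. By \eqref{eq.vector-comparable}, $\|D_r\|_2,\|D_r^{-1}\|_2\lesssim1$, so it suffices to bound $\|(I-\lambda_FQ_F)^{-1}P\|_2$ uniformly.

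To apply Lemma~\ref{lem:Q-second-singular-gap-lambda} I need $\lambda_F\in(0,1]$ and a lower bound on $\kappa_{Q_F}$. The upper bound $\lambda_F\le1$ is \eqref{eq.F-eigenvalue}, and $\lambda_F>0$ holds because $F$ is irreducible and nonzero. For the entries, $(Q_F)_{ij}=\lambda_F^{-1}f_{ij}r_j/r_i\ge f_{ij}r_j/r_i$, and by \eqref{eq.Fij} together with \eqref{eq.vector-comparable} this gives $(Q_F)_{ij}\gtrsim s_{ij}(1+|z|)^{-2}$ whenever $s_{ij}>0$.

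The main obstacle is uniformity for large $|z|$, since $I$ may be unbounded and $(1+|z|)^{-2}$ then degenerates. I would avoid this by normalizing first: since $Q_F\mathbf 1=\mathbf 1$, the rows sum to one. Entries in row $i$ are all comparable to $s_{ij}$ times a common factor $|m_i|\lambda_F^{-1}r_i^{-1}$ multiplied by $|m_j|r_j$, and all $|m_j|$ are comparable to $(1+|z|)^{-1}$ by \eqref{eq.m-bound}. Hence each positive entry is comparable to $s_{ij}/\sum_k s_{ik}\gtrsim1$. Concretely, $(Q_F)_{ij}=\frac{f_{ij}r_j}{\sum_k f_{ik}r_k}$, and every factor appearing is comparable across $j,k$, so $\kappa_{Q_F}\gtrsim1$ uniformly in $z\in\overline{\Cp}(I)$.

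On the real axis, boundary values are handled by the continuous extension of $m$, and the same argument applies, including the case $\lambda_F=1$. The constants $C,N$ in Lemma~\ref{lem:Q-second-singular-gap-lambda} depend only on the support of $S$, which is fixed. Therefore $\|(I-\lambda_FQ_F)^{-1}P\|_2\le C\kappa_{Q_F}^{-N}\lesssim1$, and conjugating back by $D_r$ gives the claim.
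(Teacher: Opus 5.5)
Your proposal is correct and follows essentially the same route as the paper: normalize $F$ to the Markov matrix $Q_F=\lambda_F^{-1}D(r)^{-1}FD(r)$ with invariant vector $lr$, write $(I-F)^{-1}\mathcal P=D(r)(I-\lambda_FQ_F)^{-1}(I-\mathbf 1\pi^\top)D(r)^{-1}$, and get $\kappa_{Q_F}\sim1$ from comparability of the positive entries together with $Q_F\mathbf 1=\mathbf 1$. The proof then concludes with Lemma~\ref{lem:Q-second-singular-gap-lambda} and $r_i\sim1$. Your explicit formula $(Q_F)_{ij}=f_{ij}r_j/\sum_k f_{ik}r_k$ just spells out the paper's one-line row-sum normalization argument that handles uniformity in large $|z|$.
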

\begin{remark}
  If $I-F$ is not invertible, then we again interpret \((I-F)^{-1}\mathcal{P}\) as the inverse of \(I-F\) restricted to \(\Ran \mathcal{P}\).
\end{remark}

\begin{proof}
We first normalize the matrix $F$
\begin{equation}\label{eq:F-normalization}
  Q_F(z):= \lambda_F^{-1}D(r)^{-1}FD(r).
\end{equation}
It follows from \(Fr=\lambda_F r\) that 
$
  Q_F\mathbf 1=\lambda_F^{-1}D(r)^{-1}Fr=\mathbf 1.
$
Moreover, with
$
  (\pi_F)_i:=l_i r_i,
$
the normalization \(\langle lr\rangle=1\) makes \(\pi_F\) a probability vector, and
\[
  \pi_F^\top Q_F=l^\top D(r)\lambda_F^{-1}D(r)^{-1}FD(r)=\lambda_F^{-1}l^\top FD(r)=l^\top D(r)=\pi_F^\top.
\]
Hence \(\pi_F\) is the invariant probability vector of \(Q_F\).
Since \(S\) is nonnegative and irreducible and  diagonal matrices $D(r),D(|m|)$ are positive, \(Q_F(z)\) is also nonnegative and irreducible and has the same support as that of $S$.
Moreover, it follows from \eqref{eq.vector-comparable} and \eqref{eq.Fij} that all positive entries of \(Q_F(z)\) are uniformly comparable. 
By \(Q_F\mathbf 1=\mathbf 1\), we obtain
$(Q_F)_{ij}\sim 1$ whenever $s_{ij}>0$, which implies that $\kappa_{Q_F}\sim 1$.

It follows from the definition that
\begin{align*}
  (I-F)^{-1}\mathcal{P}&=D(r)(I-\lambda_F Q_F)^{-1}(I-\mathbf 1\pi^\top)D(r)^{-1}.
\end{align*}
Then, Lemma \ref{lem:Q-second-singular-gap-lambda} implies $\|(I-\lambda_F Q_F)^{-1}(I-\mathbf 1\pi_F^\top)\|_2\lesssim 1$ for every $z\in\overline{\C_+}(I)$.
By \eqref{eq.vector-comparable}, we have 
$
\|D(r)\|_2\,\|D(r)^{-1}\|_2\sim 1
$
for all $z\in\overline{\C_+}(I)$,
which concludes the proof.
\end{proof}
\bigskip
The next lemma gives the perturbative description of the stability matrix near
a zero-density boundary point. At such a point the matrix \(I-F\) has one
unstable direction, generated by the Perron vector, while its restriction
to the complementary subspace remains uniformly stable. 
The following lemma makes this picture quantitative. It gives the expansion of \(R\),
the small eigenvalue \(\beta\), and the corresponding right and left
eigenvectors $\tilde{r},\tilde{l}$. Moreover, the inverse estimates show that
the only possible instability of \(R^{-1}\) comes from the one-dimensional
eigendirection associated with \(\beta\). Once this direction is projected out,
\(R^{-1}\) remains uniformly bounded, while the full inverse has size
\(|\beta|^{-1}\).

\begin{lemma}[Expansion around edge and cusp]
\label{lem:B-bad-direction-nonsym-local}
Suppose that $S\in \R^{d\times d}$ is nonnegative and irreducible. Let \(\tau\in \partial\mathfrak S\) be a regular edge or cusp. There exists $\epsilon>0$ depending only on model parameters such that the following statements hold for every \(z\in N_\epsilon(\tau)\). The complex matrix $R(z)\in\C^{d\times d}$
defined in \eqref{def.B} has a unique simple eigenvalue $\beta(z)$ of smallest modulus, with spectral gap
\begin{equation}
  |\beta'|-|\beta|\gtrsim 1,\qquad\forall\beta'\in\Spec(R)\backslash\{\beta\}.
\end{equation}
Let \(\tilde r(z)\) and \(\tilde l(z)\) be corresponding right and left eigenvectors,
\[
R\tilde r=\beta\tilde r,\qquad  R^*\tilde l=\bar\beta\tilde l,
\]
normalized by $\langle l,\tilde{r}\rangle=\langle r,\tilde{l}\rangle=1$,
where $l(z)$ and $r(z)$ are the real valued eigenvectors defined in \eqref{eq:left-right-eigenvector}.
Then the following expansions hold:
\begin{align}
R
&=
I-F-2\mathrm i\delta D\bigl(pr\bigr)
-2\delta^2D\bigl(r^2\bigr)
+O\bigl(\delta^3+\eta\bigr),
\label{eq:B-expansion-local}
\\
\beta
&=
\mu\frac{\eta}{\delta}
-2\mathrm i\sigma\delta
+2\bigl(\psi-\sigma^2\bigr)\delta^2
+O\bigl(\delta^3+\eta\bigr).
\label{eq:beta-expansion-local}\\
\tilde r
&=
r+2\mathrm i\,\delta(I-F)^{-1}\mathcal{P}\bigl[pr^2\bigr]
+O\bigl(\delta^2+\eta\bigr),
\label{eq:rtilde-expansion-local}
\\
\tilde l
&=
l-2\mathrm i\,\delta\,(I-F^\top)^{-1}\mathcal{P}^\top\bigl[plr\bigr]
+O\bigl(\delta^2+\eta\bigr).
\label{eq:ltilde-expansion-local}
\end{align}
Here all the real valued auxiliary functions are defined 
\begin{equation}
  \delta(z):=\Bigl\langle l\frac{\im m}{|m|}\Bigr\rangle,\qquad
\mu(z):=\langle l|m|\rangle,\qquad  \sigma(z):=\langle plr^2\rangle,\qquad p(z):=\sign\re m
\end{equation}
and 
\begin{equation}
 \psi(z):=\Bigl\langle plr,\frac{I+F}{I-F}\mathcal{P}[pr^2]\Bigr\rangle,\qquad \mathcal{P}(z):=I-rl^\top.
\end{equation}

The function $p(z)$ remains constant vector, with no vanishing component. The perturbed eigenvectors remain uniformly nondegenerate: 
\begin{equation}\label{eq.tilde-rl-length}
|\tilde{r}_i|\sim|\tilde{l}_i|\sim |\langle\tilde{l},\tilde{r}\rangle|\sim 1,\qquad i=1,\dots,d.
\end{equation}
Consequently, defining $\widetilde{\Pi}:=\langle \tilde{l},\tilde{r}\rangle^{-1}\tilde{r}\tilde{l}^*$ and $\widetilde{\mathcal{P}}:=I-\widetilde{\Pi}$, one has
\begin{equation}\label{eq.Rinverse-bound}
  \big\|R^{-1}\widetilde{\mathcal{P}}\big\|_2+\big\|\big(R^{-1}\widetilde{\mathcal{P}}\big)^*\big\|_2\lesssim 1\quad\text{and}\quad \|R^{-1}\|_2\sim|\beta|^{-1}.
\end{equation}
\end{lemma}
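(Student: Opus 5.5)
The plan is a perturbative expansion around the boundary point, in which $I-F$ has a one-dimensional kernel in the limit. First, at $\tau$ itself, regularity together with $\langle \im m(\tau)\rangle=0$ and Theorem~\ref{thm.bound} give $\im m(\tau)=0$ and $|m_i(\tau)|\sim1$. Hence $m_i(\tau)$ is real and nonzero, and $p=\sign\re m$ is a well-defined constant vector with no vanishing component on $N_\epsilon(\tau)$ for $\epsilon$ small; this uses the H\"older continuity of Theorem~\ref{thm.regularity}. By \eqref{eq.m-Im} and \eqref{eq.vector-comparable} we have $\delta\sim|\im m_i|/|m_i|$ for every $i$, and $\delta\to0$, $\eta\to0$ as $z\to\tau$.

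Next I expand the phase. Writing $m_i=|m_i|e^{\mathrm i\phi_i}$ with $\phi_i$ close to $0$ or $\pi$, we have $\sin\phi_i=\im m_i/|m_i|$ and $U_{ii}=e^{-2\mathrm i\phi_i}=p_i^2(1-2\mathrm i p_i\sin\phi_i-2\sin^2\phi_i+O(\sin^3\phi_i))$, with $p_i^2=1$. The key identity is that $\im m/|m|=\delta r+O(\eta)$ to the needed accuracy. Indeed, \eqref{eq.take-imaginary-Multiplying} reads $(I-F)(\im m/|m|)=\eta|m|$. Decomposing $\im m/|m|=\delta r+\mathcal P[\im m/|m|]$, where $\langle l,\cdot\rangle$ picks out $\delta$ because $\langle lr\rangle=1$, Proposition~\ref{pro.I-F-second} gives $\mathcal P[\im m/|m|]=O(\eta)$. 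Substituting into $U$ yields \eqref{eq:B-expansion-local}, with error $O(\delta^3+\eta)$. This step relies on \eqref{eq.F-eigenvalue}, which gives $1-\lambda_F=\eta\mu/\delta$, so that $(I-F)r=(\mu\eta/\delta)r$.

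For the eigenvalue, I write $R=(I-F)+E$ with $E=-2\mathrm i\delta D(pr)-2\delta^2D(r^2)+O(\delta^3+\eta)$. Since $\|(I-F)^{-1}\mathcal P\|\lesssim1$ and $I-F$ has the small eigenvalue $\mu\eta/\delta$ on $r$, the rest of its spectrum is bounded away from $0$ uniformly; this follows from Lemma~\ref{lem:Q-second-singular-gap-lambda} applied to $Q_F$. Standard analytic (Kato/Schur-complement) perturbation of a simple isolated eigenvalue then gives a unique small eigenvalue $\beta$ with gap $\gtrsim1$, and the second-order formula
\[
\beta=\tfrac{\mu\eta}{\delta}+\langle l,Er\rangle-\langle l,E(I-F)^{-1}\mathcal P E r\rangle+O(\|E\|^3).
\]
Computing the terms: $\langle l,D(pr)r\rangle=\sigma$ gives $-2\mathrm i\sigma\delta$. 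The second-order terms give $-2\delta^2\langle lr^3\rangle+4\delta^2\langle plr,(I-F)^{-1}\mathcal P[pr^2]\rangle$. These should combine to $2(\psi-\sigma^2)\delta^2$ using the identity $\tfrac{I+F}{I-F}\mathcal P=(2(I-F)^{-1}-I)\mathcal P$ together with $\langle lr^3\rangle$ versus $\langle plr,\mathcal P[pr^2]\rangle=\langle lr^3\rangle-\sigma^2$, since $p^2=1$.

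The eigenvector expansions \eqref{eq:rtilde-expansion-local}–\eqref{eq:ltilde-expansion-local} are the first-order terms $\tilde r=r-(I-F)^{-1}\mathcal P E r+\dots$, renormalized so that $\langle l,\tilde r\rangle=1$, and analogously for $\tilde l$. Then \eqref{eq.tilde-rl-length} follows from $r_i,l_i\sim1$ and $\langle lr\rangle=1$ once $\epsilon$ is small. For \eqref{eq.Rinverse-bound}, the restriction of $R$ to $\Ran\widetilde{\mathcal P}$ is a small perturbation of the restriction of $I-F$ to $\Ran\mathcal P$, which is invertible with norm $\lesssim1$ by Proposition~\ref{pro.I-F-second}; the same holds for the adjoint. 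Hence $R^{-1}=\beta^{-1}\widetilde\Pi+R^{-1}\widetilde{\mathcal P}$. Since $\|\widetilde\Pi\|\sim1$ and $|\beta|\lesssim1$, this gives $\|R^{-1}\|\sim|\beta|^{-1}$.

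The main obstacle is that $F$ is non-normal, so all perturbation bounds must go through the oblique projection $\mathcal P$. Uniformity of the spectral gap is then not automatic. I will handle it with a Schur-complement/resolvent argument on a fixed small circle around $0$, using Proposition~\ref{pro.I-F-second} as the sole invertibility input, rather than relying on eigenvalue continuity. The second difficulty is bookkeeping the $O(\eta)$ errors relative to $\eta/\delta$. Here I will keep $\mu\eta/\delta$ exactly, by using $(I-F)r=(1-\lambda_F)r$, and treat only the non-Perron remainders as $O(\eta)$.
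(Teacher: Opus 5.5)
Your overall architecture matches the paper's proof. You use the same expansion of $U$ through $\im m/|m|=\delta r+O(\eta)$ from Proposition~\ref{pro.I-F-second}, and the same Schur-complement/Rouch\'e argument on a fixed circle to isolate $\beta$. Your second-order computation also agrees: it produces $-2\langle lr^3\rangle+4\langle plr,(I-F)^{-1}\mathcal P[pr^2]\rangle=2(\psi-\sigma^2)$, and $R^{-1}\widetilde{\mathcal P}$ is controlled perturbatively in the same way.

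There is, however, one genuine gap: you never show that $1-\lambda_F=\mu\eta/\delta$ is small on $N_\epsilon(\tau)$. You assert that $I-F$ ``has a one-dimensional kernel in the limit'' and call $\mu\eta/\delta$ ``the small eigenvalue''. But the facts you actually establish, $\delta\to0$ and $\eta\to0$, do not imply $\eta/\delta\to0$. At a real point $\tau''$ in a spectral gap one also has $\delta,\eta\to0$ as $z\to\tau''$, yet there $\im m\sim\eta$, so $\delta\sim\eta$ and $1-\lambda_F\sim1$.

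Everything downstream needs this smallness:
\begin{itemize}
\item Rouch\'e on the fixed circle requires $|A|=|(1-\lambda_F)+l^\top(U-I)r|<c_*/2$.
\item The reduced resolvent at the unperturbed eigenvalue $1-\lambda_F$ is $(\lambda_F-F)^{-1}\mathcal P$, not $(I-F)^{-1}\mathcal P$. It is uniformly bounded, and replaceable by $(I-F)^{-1}\mathcal P$ at a cost of $O(\delta^2(1-\lambda_F))=O(\delta\eta)$, only when $1-\lambda_F\ll1$.
\item The a priori bound $|\tilde r-r|_2\lesssim\delta$ needs $|\beta|\lesssim\delta+\eta/\delta\ll1$.
\end{itemize}

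The missing step is short, but it uses $\tau\in\partial\mathfrak S$, not merely regularity of $\tau$. For real $\tau'\in\mathfrak S$ near $\tau$ one has $\eta=0$ and $\im m(\tau')>0$. So \eqref{eq.take-imaginary-Multiplying} at $\tau'$ reads $Fq=q$ with $q=\im m/|m|>0$. Pairing with the left Perron vector $l>0$ gives $\lambda_F\langle lq\rangle=\langle lq\rangle$, hence $\lambda_F(\tau')=1$. Equivalently, let $\eta\downarrow0$ in \eqref{eq.F-eigenvalue}, which is legitimate since $\delta(\tau')>0$.

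Since $\tau$ is a limit of such points and $\lambda_F$ is continuous on $N_\epsilon(\tau)$, you get $\lambda_F(\tau)=1$. Hence $\eta/\delta\sim1-\lambda_F(z)\to0$ as $z\to\tau$, and shrinking $\epsilon$ makes it as small as needed. With this inserted, your argument goes through along the same lines as the paper's.
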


\begin{proof}
Since $\tau$ is a regular edge or cusp, it follows that $\im m(\tau)=0$. Theorem \ref{thm.decompose of measure} implies that $m(\tau)$ is well defined and finite, and $m(z)$ is continuous in $N_\epsilon(\tau)$. This gives us that $m(z)$ is uniformly bounded on $N_\epsilon(\tau)$.

Theorem \ref{thm.bound} and Lemma \ref{lem:F-PF} imply
\begin{equation}\label{eq.rlsim1}
  |r_i(z)|\sim|l_i(z)|\sim |m_i(z)|\sim 1,
\end{equation}
which yields $\delta(z)\sim \langle\im m(z)\rangle$.
Since \(\im m(\tau)=0\), the continuity of \(m\) allows us to shrink $\epsilon$ so that both $\langle \im m\rangle$ and \(\delta(z)\) are uniformly as small as needed for all \(z\in N_\epsilon(\tau)\). In the remainder of the proof, unless stated otherwise, we always assume that $z\in N_\epsilon(\tau)$.

It follows from  $|m_i|\sim 1$ that there exists a finite constant $c_0>0$ such that 
$\min_i|m_i(z)|\ge c_0$. Since 
\begin{equation*}
  (\re m_i(z))^2=|m_i(z)|^2-(\im m_i(z))^2\ge c_0^2-(\im m_i(z))^2
\end{equation*}
and $\im m_i(z)$ is small enough, we have $\re m_i$ cannot vanish and $p_i(z)$ remains nonzero constant in $N_\epsilon(\tau)$.

On the other hand, for any fixed \(\tau'\in \mathfrak S\), we have
\(\im m(\tau')>0\). Since all terms in \eqref{eq.F-eigenvalue} admit
continuous extensions from \(\mathbb C_+\) to \(\overline{\mathbb C_+}\), we may
let \(\eta\to0\) in \eqref{eq.F-eigenvalue}. Using again that
$
    |m(\tau')|\sim |l(\tau')|\sim 1,
$
we obtain
$
    \lambda_F(\tau')=1.
$
Choosing a sequence \(\tau'_k\in\mathfrak S\) with \(\tau'_k\to\tau\) and using continuity, this further yields
$
    \lambda_F(\tau)=1.
$

Moreover, \eqref{eq.F-eigenvalue}, together with
\(|l_i(z)|\sim |m_i(z)|\sim 1\), gives
$
    1-\lambda_F(z)\sim \eta/\delta(z).
$
Hence the smallness of \(\eta/\delta(z)\) follows from the continuity of
\(\lambda_F\) at \(\tau\). Indeed, since \(\lambda_F(\tau)=1\), we have
\(1-\lambda_F(z)\to0\) as \(z\to\tau\). The above comparison therefore implies
\[
    \lim_{\substack{z\to \tau\\ ~z\in \overline\Cp}}\frac{\eta}{\delta(z)}=0.
\]
Consequently, after shrinking \(\epsilon>0\) if necessary, both
\(1-\lambda_F(z)\) and \(\eta/\delta(z)\) are sufficiently small for all \(z\in N_\epsilon(\tau)\).

\medskip
\emph{Step 1: Expansion of \(R\).}
Define $
q:=\im m/|m|.
$
Rewriting \eqref{eq.take-imaginary-Multiplying} and applying \(\mathcal{P}\) to both sides yield
\[
(I-F)\mathcal{P}q=\eta\mathcal{P}|m|,
\]
Proposition \ref{pro.I-F-second} and $|m_i(z)|\sim 1$ imply
\[
\mathcal{P}q=\eta\,(I-F)^{-1}\mathcal{P}|m|=O(\eta).
\]
Plugging $\mathcal{P}=I-rl^\top$, \(l^\top q=\delta\) and \(l^\top r=1\) yields
\[
q=\delta\, r+O(\eta)\qquad\text{and}\qquad \frac{\re m}{|m|}=p+O(\delta^2).
\]
For each component, we have
$
m_i
=
|m_i|\Bigl(p_i\sqrt{1-q_i^2}+\ii q_i\Bigr).
$
Thus, 
\[
U_{ii}=\frac{|m_i|^2}{m_i^2}
=
\frac{\overline{m_i}}{m_i}
=
\frac{p_i\sqrt{1-q_i^2}-\ii q_i}{p_i\sqrt{1-q_i^2}+\ii q_i}=1-2\ii p_i q_i-2q_i^2+O(q_i^3),
\]
where we apply the Taylor expansion at \(q_i=0\) in the last equality. 
By \(q_i=\delta r_i+O(\eta)\), we have
\begin{equation}\label{eq.U-expansion}
  U
=
I-2\ii\delta D(pr)-2\delta^2D(r^2)+O(\delta^3+\eta).
\end{equation}
The expansion \eqref{eq:B-expansion-local} follows directly from the above equality and definition $R=U-F$.

\medskip
\textit{Step 2: Spectral gap.}
We decompose $\mathbb C^d=\operatorname{span}\{r\}\oplus \operatorname{Ran}\mathcal P$. 
To make the corresponding block representation precise, we identify \(\mathbb C^d\) with \(\mathbb C\oplus \operatorname{Ran}\mathcal P\) through the linear isomorphism 
\[ \Phi:\mathbb C\oplus \operatorname{Ran}\mathcal P\to \mathbb C^d, \qquad \Phi(\gamma,w)=\gamma r+w . \] 
Thus, the following block matrix is the representation of the conjugated operator \(\Phi^{-1}R(z)\Phi\), i.e. the representation of \(R(z)\) in coordinates adapted to the above direct sum decomposition: 
\[ \Phi^{-1}R(z)\Phi = \begin{pmatrix} A(z) & M^\top(z) \\ N(z) & T(z) \end{pmatrix}. \] The blocks are explicitly given by \[ A(z):=l^\top R(z)r,\qquad M^\top(z):=l^\top R(z)\mathcal P,\qquad N(z):=\mathcal P R(z)r,\qquad T(z):=\mathcal P R(z)\mathcal P|_{\operatorname{Ran}\mathcal P}. \]
As \(\Phi^{-1}R(z)\Phi\) is similar to \(R(z)\), the two operators have the same spectrum. Therefore, studying the spectral properties of
\(\Phi^{-1}R(z)\Phi\) is equivalent to studying those of \(R(z)\).

Since $\mathcal{P}r=0$, $l^\top \mathcal{P}=0$, $\mathcal{P}(I-F)r=0$, and $l^\top(I-F)\mathcal{P}=0$, we get
\[
|A|=|(1-\lambda_F)+l^\top(U-I)r|\lesssim\delta+\frac{\eta}{\delta},
\]
\begin{equation}\label{eq.MN-bound}
  |M^\top|_2=|l^\top(U-I)\mathcal{P}|_2\lesssim\delta,\qquad |N|_2=|\mathcal{P}(U-I)r|_2\lesssim\delta.
\end{equation}
Furthermore, it follows that
\[
T=\mathcal{P}(I-F)\mathcal{P}|_{\Ran \mathcal{P}}+\mathcal{P}(U-I)\mathcal{P}|_{\Ran \mathcal{P}}.
\]
Proposition \ref{pro.I-F-second} and $\|U-I\|_2\lesssim \delta$ imply $\|T^{-1}\|_2\lesssim 1$.
Therefore, we can take $\zeta\in\C$ satisfying $|\zeta|\le c_*\sim 1$ with $c_*$ small enough such that
\begin{equation}\label{eq.D-bound}
  \|(T-\zeta)^{-1}\|_2\lesssim 1.
\end{equation}

Now suppose that $\zeta$ is an eigenvalue of $R$ with $|\zeta|\le c_*$, and write a corresponding eigenvector in the form
\[
x=\gamma r+y,\qquad y\in \Ran \mathcal{P}.
\]
Then $(R-\zeta)x=0$ is equivalent to
\[
(A-\zeta)\gamma+M^\top y=0,
\qquad
\gamma N+(T-\zeta) y=0.
\]
Since $T-\zeta$ is invertible, the second equation yields
$
y=-\gamma(T-\zeta)^{-1}N.
$
Substituting this into the first equation, we obtain the scalar equation
\[
f(\zeta):=A-\zeta-M^\top(T-\zeta)^{-1}N=0.
\]

Define the circle
$
\Gamma:=\{w\in\C: |w|=c_*\}.
$
For $\zeta\in \Gamma$, it follows from \eqref{eq.MN-bound} and \eqref{eq.D-bound} that
\[
|f(\zeta)-(A-\zeta)|=|M^\top(T-\zeta)^{-1}N|\le  C\delta^2,
\]
where $C>0$ is a finite constant depending only on model parameters. By shrinking $\epsilon>0$ if necessary, we may assume that $C\delta^2<c_*/2$ and $|A|<c_*/2$.
Therefore,  we have 
\[
|f(\zeta)-(A-\zeta)|\le  C\delta^2<\frac{c_*}{2}<|A-\zeta|,\qquad\zeta\in\Gamma.
\]
By Rouch\'e's theorem, $f(\zeta)$ and $A-\zeta$ have the same number of zeros inside $\Gamma$. Since $A-\zeta$ has exactly one zero there, $f$ also has exactly one zero there. 

Differentiating $f$, we obtain
$
f'(\zeta)=-1-M^\top (T-\zeta)^{-2}N.
$
Again using \eqref{eq.MN-bound} and \eqref{eq.D-bound}, we get
$
f'(\zeta)=-1+O(\delta^2).
$
Hence $f'(\beta)\neq 0$, so $\beta$ is a simple eigenvalue of $R$.

\medskip
\noindent\emph{Step 3: Expansion of eigenvalue and eigenvectors.}
Since \(\tilde r\) is a right eigenvector of $R$, normalized by $\langle l,\tilde{r}\rangle=1$
we can decompose it as
\begin{equation}\label{eq.rvector-decomposition}
\tilde r=r+w,
\qquad
w\in \Ran \mathcal{P}.  
\end{equation}
Inserting this decomposition into the eigenvalue equations and using $Fr=\lambda_Fr$, we have
\[
(1-\lambda_F)r+(I-F)w+(U-I)r+(U-I)w=\beta(r+w).
\]
Then, applying \(\mathcal{P}\) and using \(\mathcal{P}(I-F)w=(I-F)w\) and $\mathcal{P}r=0$, we obtain
\[
(I-F)w+ \mathcal{P}(U-I)r+\mathcal{P}(U-I)w=\beta \mathcal{P}w.
\]
Since \(I-F\) is invertible on $\Ran \mathcal{P}$, we apply its inverse $(I-F)^{-1}$
and conclude that
\begin{equation}\label{eq.w-expansion}
  w=-(I-F)^{-1}\mathcal{P}(U-I)r-(I-F)^{-1}\mathcal{P}(U-I)w+\beta (I-F)^{-1}\mathcal{P} w.
\end{equation}

Using the following facts:
\begin{equation}\label{eq.general-bound}
  \|(I-F)^{-1} \mathcal{P}\|_2\lesssim 1,
\quad
\|U-I\|_2\lesssim \delta,
\quad
|\beta|\lesssim 1-\lambda_F+\|U-I\|_2\lesssim\delta+\frac{\eta}{\delta},
\end{equation} 
we have the following a priori estimate
\begin{equation}\label{eq.w-order}
  |w|_2\lesssim\delta+(\delta+|\beta|)|w|_2\Longrightarrow |w|_2\lesssim\delta.
\end{equation}
Inserting the expansion \eqref{eq.U-expansion} into \eqref{eq.w-expansion}, it follows that
\begin{equation}
    w=2\mathrm i\delta (I-F)^{-1}\mathcal{P}[pr^2]+O(\delta^2+\eta),
\end{equation}
which together with \eqref{eq.rvector-decomposition} yields \eqref{eq:rtilde-expansion-local}. 

By the same argument, we can decompose $\tilde{l}$ as 
\begin{equation}\label{eq.tilde-l-decomposition}
  \tilde{l}=l+v,\qquad v\in \Ran \mathcal{P}^\top.
\end{equation}
Plugging it into $R^*\tilde{l}=\bar\beta\tilde{l}$ and using expansion \eqref{eq.U-expansion} imply
\begin{equation}\label{eq.v-expansion}
  v=-2\mathrm i\,\delta\,(I-F^\top)^{-1}\mathcal{P}^\top\bigl[plr\bigr]+O\bigl(\delta^2+\eta\bigr),
\end{equation}
which directly yields \eqref{eq:ltilde-expansion-local}.
Since the leading order terms of $\tilde{r}$ and $\tilde{l}$ are $r$ and $l$, \eqref{eq.tilde-rl-length} follows directly from \eqref{eq.vector-comparable} together with $l^\top r=1$.

\medskip
Applying \(l^\top\) to
$
R\tilde r=\beta\tilde r
$
and using \(l^\top(I-F)=(1-\lambda_F)l^\top\) together with \(l^\top w=0\), we obtain
\begin{equation}\label{eq:bad-scalar-right}
\beta
=
(1-\lambda_F)
+l^\top(U-I)r
+l^\top(U-I)w.
\end{equation}
Inserting the expansion \eqref{eq.U-expansion} and \eqref{eq.w-expansion}, we can obtain
\begin{equation}\label{eq.s-expansion}
  \beta=\mu\frac{\eta}{\delta}-2\mathrm i\sigma\delta+\left(-2\langle lr^3\rangle +4\langle plr,(I-F)^{-1}\mathcal{P}[pr^2]\rangle\right)\delta^2+O(\delta^3+\eta)
\end{equation}
Since
\[
\frac{I+F}{I-F}\mathcal{P}
=
\bigl(2(I-F)^{-1}-I\bigr)\mathcal{P}
=
2(I-F)^{-1}\mathcal{P}-\mathcal{P},
\]
we have
\[
\psi
=
2\langle plr,(I-F)^{-1}\mathcal{P} [pr^2]\rangle-\langle plr,\mathcal{P} [pr^2]\rangle.
\]
It follows from \(\mathcal{P}=I-rl^\top\) that
\[
\langle plr,\mathcal{P} [pr^2]\rangle
=
\langle plr,pr^2-r\langle plr^2\rangle\rangle
=
\langle l,r^3\rangle-\sigma^2.
\]
which implies 
\[
-\langle lr^3\rangle+2\langle plr,(I-F)^{-1}\mathcal{P} [pr^2]\rangle =\psi-\sigma^2 .
\]
Plugging the above equality into \eqref{eq.s-expansion} yields \eqref{eq:beta-expansion-local}.

\medskip
\textit{Step 4: Inverse bound.}
Since $\widetilde{\Pi}$ is the rank-one projection matrix on the direction $\tilde r$, by the eigenvector equations we have
$
R\widetilde \Pi=\widetilde \Pi R=\beta \widetilde{\Pi}$ and 
$
R\widetilde{\mathcal{P}}=\widetilde{\mathcal{P}}R .
$
In particular, \(\operatorname{Ran}\widetilde{\mathcal{P}}
=\ker \tilde l^*\) is invariant under \(R\).
It follows from \eqref{eq.tilde-rl-length} that \(\widetilde{\Pi}\) and \(\widetilde{\mathcal{P}}\) are uniformly bounded
$
\|\widetilde{\Pi}\|_2+\|\widetilde{\mathcal{P}}\|_2\lesssim 1 .
$

We now prove the bound on \(R^{-1}\widetilde{\mathcal{P}}\). 
For any \(x\in \Ran \widetilde{\mathcal{P}}\), the following decomposition holds:
\[
x=\theta r+y,
\qquad
y\in \operatorname{Ran}\mathcal{P} .
\]
Since $\Ran \widetilde{\mathcal{P}}=\ker \tilde l^*$, using the decomposition \eqref{eq.tilde-l-decomposition} yields
\[
0=\tilde l^*x=(l+v)^*(\theta r+y)
=\theta+v^*y,
\]
where we used \(l^\top r=1\), \(l^\top y=0\), and \(v^*r=0\). Hence, it follows that
$
|\theta|\le |v|_2|y|_2\lesssim \delta |y|_2,
$
and consequently we have
$
|x|_2\sim |y|_2.
$

Next, we apply \(\mathcal{P}\) to \(Rx\). Using $R=(I-F)+(U-I)$ together with 
$
\mathcal{P}(I-F)r=0$
and
$
\mathcal{P}(I-F)y=(I-F)y,
$
we obtain
\[
\mathcal{P} Rx
=
(I-F)y+\mathcal{P}(U-I)x .
\]
Therefore, rearranging the above identity and applying $(I-F)^{-1}$ yields
\[
y
=
(I-F)^{-1}\mathcal{P} Rx
-
(I-F)^{-1}\mathcal{P}(U-I)x .
\]
Then it follows from 
$
\|(I-F)^{-1}\mathcal{P}\|_2\lesssim 1$
and
$
\|U-I\|_2\lesssim \delta
$
that
$
|y|_2
\lesssim
|Rx|_2+\delta |x|_2
\lesssim
|Rx|_2+\delta |y|_2.
$
Since $\delta$ is small enough, using $|x|_2\sim|y|_2$ and absorbing the last term, we have
\begin{equation}\label{eq.x-Rx}
  |x|_2\lesssim |y|_2\lesssim |Rx|_2,\qquad
\forall x\in \Ran \widetilde{\mathcal{P}} .
\end{equation}

Now take arbitrary \(\mathbf{z}\in\mathbb C^d\) and set
$
\tilde{\mathbf{z}}=R^{-1}\widetilde{\mathcal{P}}\mathbf{z} .
$
Since \(R\) commutes with \(\widetilde{\mathcal{P}}\), we have \(\tilde{\mathbf{z}}\in \Ran\widetilde{\mathcal{P}}\). It follows from \eqref{eq.x-Rx} that
\[
|R^{-1}\widetilde{\mathcal{P}}\mathbf{z}|_2=|\tilde{\mathbf{z}}|_2
\lesssim
|R\tilde{\mathbf{z}}|_2
=
|\widetilde{\mathcal{P}}\mathbf{z}|_2
\lesssim
|\mathbf{z}|_2,
\]
which implies
$
\|R^{-1}\widetilde{\mathcal{P}}\|_2\lesssim 1.
$
Since the Euclidean operator norm is invariant under taking adjoints,
we conclude the first part of \eqref{eq.Rinverse-bound}.

By 
$
I=\widetilde{\Pi}+\widetilde{\mathcal{P}}
$
and \(R\widetilde{\Pi}=\beta \widetilde{\Pi}\), it can be deduced that
\[
R^{-1}
=
\beta^{-1}\widetilde{\Pi}
+
R^{-1}\widetilde{\mathcal{P}}.
\]
Using \(\|\widetilde{\Pi}\|_2\lesssim 1\) and $\|R^{-1}\widetilde{\mathcal{P}}\|_2\lesssim 1$, we obtain
$
\|R^{-1}\|_2
\lesssim
|\beta|^{-1}+1
\lesssim
|\beta|^{-1}.
$
The reverse bound follows from \(R^{-1}\tilde r=\beta^{-1}\tilde r\). This shows the second part of \eqref{eq.Rinverse-bound}. Hence we conclude the proof.

\end{proof}

The previous lemma shows that the possible instability of the stability
operator is governed by the small eigenvalue \(\beta\), whose expansion contains
the two coefficients \(\sigma\) and \(\psi\). In order to use this expansion
near a regular edge or cusp, we need to know that these coefficients do not
degenerate simultaneously. The next proposition provides precisely this
nondegeneracy. First, it shows that \(\psi\) is always nonnegative. More
importantly, under Assumption \textup{(Sym)} or \textup{(NB)}, it proves that
near a regular edge or cusp
$
  \psi+\sigma^2\sim 1 .
$
Thus, when \(\sigma\) becomes small, the coefficient \(\psi\) remains bounded
from below. This is the key input for cusp stability and cusp singularity.

\begin{proposition}\label{prop:psi-positive-and-nondegenerate}
Assume the setting of the previous lemma. Then
$
\psi(z)\ge0
$
for all $z\in\Cp$.
Moreover, let \(\tau\in \partial\mathfrak S\) be a regular edge or cusp.
If, in addition, $S$ satisfies \textup{(Sym)} or \textup{(NB)}, then there exists $\epsilon>0$ depending only on model parameters such that
\begin{equation*}
    \psi(z)+\sigma(z)^2\sim 1,\qquad z\in N_\epsilon(\tau).
\end{equation*}
\end{proposition}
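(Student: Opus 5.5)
The plan is to rewrite $\psi$ in the Markov normalization of Proposition~\ref{pro.I-F-second} and read off both claims from a contraction identity. Set $Q_F:=\lambda_F^{-1}D(r)^{-1}FD(r)$ and $\pi_F:=lr$. Using $D(r)^{-1}FD(r)=\lambda_FQ_F$ and $\mathcal P D(r)g=D(r)\bigl(g-\langle \pi_F g\rangle\mathbf 1\bigr)$, one gets
\[
\psi=\Bigl\langle p,\ \frac{I+\lambda_FQ_F}{I-\lambda_FQ_F}\,h\Bigr\rangle_{\pi_F},\qquad h:=pr-\sigma\mathbf 1,\qquad \sigma=\langle pr\rangle_{\pi_F}=\langle plr^2\rangle .
\]
Here $h$ lies in $\Ran(I-\mathbf 1\pi_F^\top)$, so Lemma~\ref{lem:Q-second-singular-gap-lambda} lets us put $y:=(I-\lambda_FQ_F)^{-1}h$ with $|y|_{\pi_F}\lesssim|h|_{\pi_F}$.

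If the left vector were $h$ rather than $p$, nonnegativity would be immediate:
\[
\re\bigl\langle (I-\lambda_F Q_F)y,(I+\lambda_F Q_F)y\bigr\rangle_{\pi_F}=|y|^2_{\pi_F}-\lambda_F^2|Q_Fy|^2_{\pi_F}\ge0,
\]
by Jensen's inequality, i.e.\ $Q_F$ is a contraction on $L^2(\pi_F)$. I will reduce to this form as follows.

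\begin{enumerate}
\item \emph{Symmetric case (Sym).} Here $F$ is symmetric, so $l=r$ and $plr=pr^2$. The formula becomes $\psi=\langle pr^2,\frac{I+F}{I-F}\mathcal P[pr^2]\rangle$ with $\mathcal P$ an orthogonal projection. Since $\Spec F\subset[-\lambda_F,\lambda_F]\subset[-1,1]$, the function $\frac{1+x}{1-x}$ is nonnegative on the relevant part of the spectrum, and $\psi\ge0$ follows by the spectral theorem.
\item \emph{General case.} I will use the imaginary-part identity \eqref{eq.take-imaginary-Multiplying} and the relation $q=\delta r+O(\eta)$ to identify the left vector $\pi_Fp$ with $\pi_F h$ up to terms killed by $\langle\cdot,\mathbf 1\rangle_{\pi_F}$. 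In the (NB) case I will instead use the reversal involution $e\mapsto\bar e$, which intertwines $S$ with $S^\top$ and should give $l_e\propto r_{\bar e}$. This is the step I am least sure of. If it fails, I will prove $\psi\ge0$ only in the two model classes, which is all that is used later.
\end{enumerate}

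For the nondegeneracy $\psi+\sigma^2\sim1$ I will argue by compactness and continuity on $N_\epsilon(\tau)$. All quantities extend continuously to $\overline{\C_+}$, so it suffices to rule out $\psi=\sigma=0$ at a limiting point. By the contraction identity, $\psi=0$ forces equality $|\lambda_FQ_Fy|_{\pi_F}=|y|_{\pi_F}$. Hence $\lambda_F=1$ and $y$ lies in the peripheral subspace $\mathcal E$ of Step~2 of Lemma~\ref{lem:Q-second-singular-gap-lambda}. In addition, $\langle y,(I+Q_F)y\rangle$ must have vanishing real part, which leaves only the component along the eigenvalue $-1$. So $h=pr-\sigma\mathbf 1$ is a multiple of the alternating vector $e^{(d_*/2+1)}$. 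This forces the period $d_*$ to be even and $pr$ to be constant up to a sign alternating on the cyclic classes; with $\sigma=0$ the two signs carry equal $\pi_F$-mass.

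It remains to exclude this resonance, which I expect to be the main obstacle.
\begin{enumerate}
\item \emph{(Sym).} An alternating $p$ with constant $|r|$ means the support graph is bipartite. I will plug $m_i=p_i|m_i|$ into the real part of the Dyson equation at $\tau$, together with $F r=r$. The aim is to show that $\re(\tau\mathbf 1-\mathbf a+Sm)$ cannot alternate in sign in the required way while keeping $\im m=0$ to second order. The hope is that this contradicts $\tau$ being a boundary point rather than an interior bulk point, though this is the least certain step.
\item \emph{(NB).} Minimal degree $\ge3$ should make the non-backtracking matrix primitive unless $\mathcal G$ is bipartite. In the bipartite case, the terminal dependence $a_{(x,y)}=a_y$ together with $p$ alternating between the two classes of directed edges should force $r_e=r_{\bar e}$-type symmetries. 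Combined with the degree condition, I expect these to contradict the eigen-relation $Fr=r$ through counting outgoing edges.
\end{enumerate}
The upper bound $\psi+\sigma^2\lesssim1$ is immediate from Proposition~\ref{pro.I-F-second} and \eqref{eq.rlsim1}.
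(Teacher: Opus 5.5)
Your skeleton (Markov normalization of $F$, Jensen's inequality, ruling out the equality case, reduction to a bipartite alternating mode) is the paper's, but both halves have genuine gaps.

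\emph{The first claim, $\psi\ge0$.} Your rewriting contains a slip that creates an obstacle that is not there. Since $\mathcal P[pr^2]=D(r)h$ and $\frac{I+F}{I-F}D(r)=D(r)\frac{I+\lambda_FQ_F}{I-\lambda_FQ_F}$, you get $\langle plr,D(r)Y\rangle=\sum_i(l_ir_i)(p_ir_i)Y_i=\langle pr,Y\rangle_{\pi_F}$. So the left vector is $pr$, not $p$. Write $pr=h+\sigma\mathbf 1$ and use $\pi_F^\top\frac{I+\lambda_FQ_F}{I-\lambda_FQ_F}=\frac{1+\lambda_F}{1-\lambda_F}\pi_F^\top$ together with $\pi_F^\top h=0$. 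The $\sigma\mathbf 1$ part then drops out, and
$\psi=\langle h,\frac{I+\lambda_FQ_F}{I-\lambda_FQ_F}h\rangle_{\pi_F}=|y|_{\pi_F}^2-\lambda_F^2|Q_Fy|_{\pi_F}^2\ge0$
for every nonnegative irreducible $S$; the cross terms cancel because $y$ and $Q_F$ are real. This is exactly the paper's argument. As written, however, your general case is not established. Identifying $\pi_Fp$ with $\pi_Fh$ is unjustified. The relation $l_e\propto r_{\bar e}$ fails in general, since $|m_e|\ne|m_{\bar e}|$ in general. The fallback to the model classes proves less than the statement, and in (NB) you have no argument at all.

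\emph{The nondegeneracy claim.} Equality $|Q_Fy|_{\pi_F}=|y|_{\pi_F}$ only says that $y$ is constant on the support of each row of $Q_F$.
\begin{itemize}
\item This forces $y\in\ker(Q_F^2-I)$ when $Q_F$ is self-adjoint on $L^2(\pi_F)$, as under (Sym), but not otherwise. For period $3$, every real $y\in\mathcal E$ already gives equality with a non-alternating $h$.
\item Under (NB), $Q_F$ is not reversible, and equality gives the whole family $y_{uv}=\varphi_u$, i.e.\ $h_{uv}=\varphi_u-\varphi_v$. So your claim $y\in\mathcal E$ is false there. Reducing to the alternating mode is the real work. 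Combining the Perron relation $h_{uv}=m_{uv}\sum_{w\sim v,\,w\ne u}m_{vw}h_{vw}$ with the Dyson equation gives $\sum_{w\sim v}m_{vw}(\varphi_v-\varphi_w)=-(\tau+a_v+\sum_{w\sim v}m_{vw})(\varphi_u-\varphi_v)$ for all $u\sim v$. The degenerate case where the bracket vanishes (equivalently $m_{uv}m_{vu}=1$) is then excluded by a sign argument.
\end{itemize}

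More seriously, the alternating mode itself cannot be excluded from the Dyson and Perron equations alone. So neither your real-part argument in (Sym) nor ``counting outgoing edges'' in (NB) can succeed. These equations only force $\tau+a_i=0$ for all $i$. At that point, take $T=\diag(\pm1)$ the bipartition signs and $q>0$ with $Sq=1/q$ (e.g.\ $q$ constant for regular bipartite graphs). The real vector $m=Tq$ then satisfies both equations with constant Perron vector, $p=T$, $\sigma=0$ (because $l^\top T=-l^\top FT=-l^\top T$) and $\psi=0$.

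What excludes this configuration is the choice of the physical branch. After shifting to $\mathbf a=0$, the function $\widetilde m(z):=-\overline{m(-\bar z)}$ is again a solution in $\C_+^d$. Uniqueness then gives $m(\ii\eta)\in\ii\R^d$, hence $m(0)\in\ii\R^d$. At a zero-density point this forces $m(0)=0$, contradicting $|m_i|\sim1$. This symmetry argument is the missing idea in your proposal.
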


\begin{proof}
  We first prove $\psi\ge 0$.
  In this step, we use the same method as in the proof of Lemma \ref{lem:Q-second-singular-gap-lambda}, that is, normalizing $F$ into a Markov transition matrix.
  Define
  \[
  \pi_i:=l_ir_i,\qquad
  h_i:=p_ir_i,\qquad \Pi:=\mathbf 1\,\pi^\top,
  \qquad
  Q_F:=\lambda_F^{-1}D(r)^{-1}FD(r).
  \]
  Since \(l^\top r=1\), the vector \(\pi=(\pi_i)\) can be viewed as a probability vector. Also, we have
  $
  Q_F\mathbf 1=\mathbf 1$ and
  $
  \pi^\top Q_F=\pi^\top,
  $
  so \(Q_F\) is a Markov matrix with invariant measure \(\pi\).

  Noting that
  $
  plr=D(l)h
  $
  and
  $
  pr^2=D(r)h,
  $
  it follows from \(\mathcal{P}=I-r l^\top\) that
  \[
  \mathcal{P}[pr^2]
  =
  (I-r l^\top)D(r)h
  =
  D(r)(I-\Pi)h.
  \]
  Moreover, since \(F=\lambda_F D(r)Q_FD(r)^{-1}\), we have
  \[
  D(l)\,\frac{I+F}{I-F}\,D(r)
  =
  D(\pi)\,\frac{I+\lambda_F Q_F}{I-\lambda_F Q_F}.
  \]
  Therefore,
  \[
  \psi
  =
  \Big\langle plr,\frac{I+F}{I-F}\mathcal{P}[pr^2]\Big\rangle
  =
  \Big\langle D(l)h,\frac{I+F}{I-F}D(r)(I-\Pi)h\Big\rangle
  =
  \Big\langle h,\frac{I+\lambda_F Q_F}{I-\lambda_F Q_F}(I-\Pi)h\Big\rangle_\pi,
  \]
  where
  $
  \langle u,v\rangle_\pi:=\sum_i \pi_i \overline{u_i} v_i.
  $ 
  We claim that 
  \begin{equation}\label{eq.claim-psi}
    \psi=\Big\langle (I-\Pi)h,\frac{I+\lambda_F Q_F}{I-\lambda_F Q_F}(I-\Pi)h\Big\rangle_\pi.
  \end{equation}
  Indeed, it follows from $\Pi h=\langle h\rangle_\pi\mathbf{1}$ that
  \begin{equation}\label{eq.calculate1}
    \Big\langle \Pi h,\frac{I+\lambda_F Q_F}{I-\lambda_F Q_F}(I-\Pi)h\Big\rangle_\pi=(\pi^\top h)\pi^\top\frac{I+\lambda_F Q_F}{I-\lambda_F Q_F}(I-\Pi)h.
  \end{equation}
  Since \(\pi^\top Q_F=\pi^\top\), we have
  $
    \pi^\top(I-\lambda_F Q_F)
    =
    (1-\lambda_F)\pi^\top .
  $
  Hence, it follows that
  $
    \pi^\top(I-\lambda_F Q_F)^{-1}
    =
    (1-\lambda_F)^{-1}\pi^\top,
  $
  and therefore we have
  \[
    \pi^\top\frac{I+\lambda_F Q_F}{I-\lambda_F Q_F}
    =
    \frac{1+\lambda_F}{1-\lambda_F}\pi^\top .
  \]
  Inserting the above equality back into \eqref{eq.calculate1}, it follows from $\pi^\top\mathbf{1}= 1$ that
  \[
  \Big\langle \Pi h,\frac{I+\lambda_F Q_F}{I-\lambda_F Q_F}(I-\Pi)h\Big\rangle_\pi=(\pi^\top h)\frac{1+\lambda_F}{1-\lambda_F}\pi^\top(I-\Pi)h=(\pi^\top h)\frac{1+\lambda_F}{1-\lambda_F}(1-\pi^\top\mathbf{1})\pi^\top h=0,
  \]
  which proves \eqref{eq.claim-psi}.

  Next define
  $
  y:=(I-\lambda_F Q_F)^{-1}(I-\Pi)h.
  $
  Then, by definition, we have
  \begin{equation}\label{eq.psi-calculation}
    \psi
  =
  \langle (I-\lambda_F Q_F)y,(I+\lambda_F Q_F)y\rangle_\pi
  =
  |y|_\pi^2-\lambda_F^2|Q_Fy|_\pi^2 ,
  \end{equation}
  where $|\cdot|_\pi$ is the norm corresponding to inner product $\langle\cdot,\cdot\rangle_\pi$.
  Using that \(Q_F\) is Markov and \(\pi\) is invariant measure, Jensen's inequality implies
  $
  |Q_Fy|_\pi^2\le
  |y|_\pi^2.
  $
  Therefore, we have
  $
  \psi\ge (1-\lambda_F^2)|y|_\pi^2\ge 0.
  $

  \medskip
  Next, we consider the quantity $\psi+\sigma^2$. 
  The argument used in the proof of Lemma~\ref{lem:B-bad-direction-nonsym-local} shows that the solution \(m\) is uniformly bounded on \(N_\epsilon(\tau)\).
  Hence all the quantities appearing in Lemma \ref{lem:B-bad-direction-nonsym-local} can be extended to the real line.
  If \(\sigma(\tau)\ne0\), then \(\sigma(\tau)\) is a nonzero model parameter. Therefore \(\sigma(\tau)^2\sim1\), and continuity yields $|\sigma(z)|\sim 1$ for all $z\in N_\epsilon(\tau)$ after shrinking $\epsilon$ if necessary. Combining $\psi\ge 0$ directly yields $\psi(z)+\sigma(z)^2\sim 1$ for all $z\in N_\epsilon(\tau)$.
  
  Then, we focus on the case $\sigma(\tau)=0$. In the remainder of the proof, unless otherwise specified, all quantities without an explicit argument are evaluated at \(\tau\).
  Using $\Pi h=\sigma \mathbf{1}=0$ yields 
  \[
      h=(I-\Pi)h+\Pi h=(I-\Pi)h+\sigma\mathbf{1}=(I-\Pi)h.
  \]  
  By $\lambda_F=1$, it suffices to prove $Q_F$ is a strict contraction in the direction $y=(I-Q_F)^{-1}(I-\Pi)h=(I-Q_F)^{-1}h$ :
  \begin{equation}\label{eq.QF-contraction}
    |Q_Fy|_\pi^2< |y|_\pi^2.
  \end{equation}
  Indeed, if the above inequality holds, then combining $\sigma=0, \lambda_F=1$ and \eqref{eq.psi-calculation} yields
  \[
      \psi+\sigma^2=\psi= |y|_\pi^2-|Q_Fy|_\pi^2>0.
  \]
  Since $\psi+\sigma^2$ is a nonzero model parameter, continuity implies $\psi(z)+\sigma^2(z)\sim 1$ for all $z\in N_\epsilon(\tau)$ after shrinking $\epsilon$ if necessary.

  We prove \eqref{eq.QF-contraction} by contradiction, treating the two cases separately.
  
  \smallskip

  \textit{Assumption} (Sym): Since \(S\) is symmetric, the matrix $F$ is symmetric as well. 
    If $|Q_Fy|_\pi=|y|_\pi$, then equality must hold in Jensen's inequality for every
    row. Hence \(y\) is constant on the support of each row of \(Q_F\). Since
    $Q_F$ and $S$ have the same support, this means that
    \[
        y\in\left\{x\in\C^d: x_j=x_k~
        \text{if there exists } i \text{ such that } s_{ij}>0
        \text{ and } s_{ik}>0\right\}:=\mathcal{K}_1.
    \]
    Equivalently, \(y\) is constant on each distance-two component of the
    support graph of \(S\).

    If the support graph of \(S\) is not bipartite, then its distance-two graph
    is connected. Therefore \(y\) is constant. Since \(Q_F\mathbf1=\mathbf1\), this
    implies
    $
        h=(I-Q_F)y=0,
    $
    contradicting \(h=pr\ne0\).

    It remains to consider the bipartite case. Let
    $
        \{1,\dots,d\}=\mathbb V_+\sqcup\mathbb V_-
    $
    be the bipartition of the support graph. Since \(y\) is constant on each
    distance-two component, there exist constants \(c_+,c_-\) such that
    \[
        y_i=c_+\quad\text{for }i\in\mathbb V_+,
        \qquad
        y_i=c_-\quad\text{for }i\in\mathbb V_- .
    \]
    If \(c_+=c_-\), then \(y\) is constant and again \(h=(I-Q_F)y=0\), a
    contradiction. Thus \(c_+\ne c_-\). For \(i\in\mathbb V_+\), all neighbors
    of \(i\) lie in \(\mathbb V_-\), and hence
    $
        (Q_Fy)_i=c_-.
    $
    Similarly, for \(i\in\mathbb V_-\), we have \((Q_Fy)_i=c_+\). Therefore
    \[
        h_i=((I-Q_F)y)_i
        =
        \begin{cases}
            c_+-c_-, & i\in\mathbb V_+,\\
            c_--c_+, & i\in\mathbb V_-.
        \end{cases}
    \]
    Since \(h_i=p_i r_i\) and \(r_i>0\), it follows that \(r_i=|c_+-c_-|\) for
    all \(i\). Thus \(r\) is constant, and \(p_i\) is constant on each side of
    the bipartition with opposite signs on the two sides. After possibly
    multiplying all signs by \(-1\), we may write
    \[
        p_i=
        \begin{cases}
            1, & i\in\mathbb V_+,\\
            -1, & i\in\mathbb V_-.
        \end{cases}
    \]

    We now use the Perron equation and the Dyson equation to exclude this alternating mode.
    Since \(r\) is constant and \(Fr=r\), we have
    \begin{equation}\label{eq.perren-equation-symmetric}
      |m_i|\sum_j s_{ij}|m_j|=1,\qquad i\in\mathbb V .
    \end{equation}
    On the other hand, it follows from \(\im m=0\) that \(m\in\mathbb R\). Moreover, as \(p\) has no vanishing component, \(p^2=\mathbf 1\). Hence
    $
    m=p|m|.
    $
    Since \(p_j=-p_i\) whenever \(s_{ij}>0\), the Dyson equation gives
    \[
        -\frac1{p_i|m_i|}
        =
        \tau+a_i+\sum_j s_{ij}p_j|m_j|
        =
        \tau+a_i
        -
        p_i\sum_j s_{ij}|m_j| .
    \]
    Putting \eqref{eq.perren-equation-symmetric} into the above equation yields
    \[
        \tau+a_i=0, \qquad i\in\mathbb V .
    \]
    Thus the alternating mode can occur only at the exceptional point where
    \(\tau+a_i=0\) for every \(i\). This forces \(a_i\) to be independent of
    \(i\). After shifting the spectral parameter, we may assume without loss of
    generality that \(a_i=0\) for all \(i\). The exceptional point is then
    \(\tau=0\).

    It remains to show that \(\tau=0\) cannot be a zero-density point. Define 
    \[ 
    \widetilde m(z):=-\overline{m(-\overline z)}, \qquad z\in\mathbb C_+. 
    \] 
    Since \(S\) is real, a direct check shows that \(\widetilde m\) is again a physical solution of the Dyson equation on \(\mathbb C_+\). 
    It follows from the uniqueness of the physical solution that  
    \begin{equation}\label{eq.invariant-transformation}
        \widetilde m(z)=m(z),\qquad z\in\mathbb C_+.
    \end{equation}
    Evaluating at \(z=\mathrm i\eta\), where \(-\overline z=z\), yields $ m(\mathrm i\eta)=-\overline{m(\mathrm i\eta)}$.  Hence \(m(\mathrm i\eta)\) is purely imaginary for every \(\eta>0\). Passing to the boundary value at \(0\), we conclude that \(m(0)\) is purely imaginary.
    If
    $
        \langle \im m(0)\rangle=0,
    $
    then we have \(m(0)=0\), contradicting the uniform lower bound \eqref{eq.m-bound}. This excludes
    the alternating mode in the bipartite case.

      \smallskip
      \textit{Assumption}~(NB):  In this case the
      indices are directed edges \(e=(u,v)\in \vec E_{\mathcal G}\), and the Dyson equation has the form
      \begin{equation}\label{eq.qve-NB}
        -\frac1{m_{uv}}
          =
          \tau+a_v+\sum_{\substack{w\sim v\\ w\ne u}}m_{vw}.
      \end{equation}

    We prove \eqref{eq.QF-contraction} by contradiction.  Since
    \(Q_F\) is the Markov normalization of the non-backtracking matrix, its
    support is
    \[
        (u,v)\longrightarrow (v,w),
        \qquad w\sim v,\quad w\ne u .
    \]
    Jensen's inequality implies  $|Q_Fy|_\pi\le |y|_\pi $.
    Equality holds if and only if \(y\) is
    constant on the support of every row of \(Q_F\).  Since every vertex of
    \(\mathcal G\) has degree at least \(3\), this is equivalent to
    \begin{equation}\label{eq:NB-equality-space}
      y\in\left\{y\in\C^{\vec E_{\mathcal G}}:\exists(\varphi_u)_{u\in \mathbb{V}_{\mathcal G}}\in\C^{\mathbb V_{\mathcal G}}~\text{such that}~y_{uv}= \varphi_u\right\}:=\mathcal{K}_2.
    \end{equation}
    Indeed, for fixed \(v\), any two directed edges \((v,w_1)\) and
    \((v,w_2)\) can be put into the same row support by choosing
    \(u\sim v\) with \(u\ne w_1,w_2\). 

    Suppose that \eqref{eq.QF-contraction} fails. Then $y\in\mathcal{K}_2$ implies that $h=(I-\Pi)h=(I-Q_F)y$ has the form
    \begin{equation}\label{eq:h-coboundary}
        h_{uv}=\varphi_u-\varphi_v .
    \end{equation}

    We now show that this representation is incompatible with the Dyson equation and
    the Perron equation.
    Substituting $|m|=pm$ and $r=ph$ into the Perron equation $Fr=r$ yields

    \[
        h_{uv}
        =
        m_{uv}\sum_{\substack{w\sim v\\ w\ne u}}m_{vw} h_{vw} .
    \]
    Then, plugging \eqref{eq:h-coboundary} into the above equation gives
    \begin{equation}\label{eq:bad-perron-terminal}
        \varphi_u-\varphi_v
        =
        m_{uv}\sum_{\substack{w\sim v\\ w\ne u}}
        m_{vw}(\varphi_v-\varphi_w).
    \end{equation}
    For simplicity, we define
    \[
        M_v:=\sum_{w\sim v}m_{vw}(\varphi_v-\varphi_w),\qquad
        N_v:=\sum_{w\sim v}m_{vw}.
    \]
    Adding the missing backtracking term \(m_{uv}m_{vu}(\varphi_v-\varphi_u)\) to both sides, we get
    \begin{equation}\label{eq:center-equation-terminal}
        (1-m_{uv}m_{vu})(\varphi_u-\varphi_v)
        =
        m_{uv}M_v.
    \end{equation}
    On the other hand, the Dyson equation \eqref{eq.qve-NB} gives
    \begin{equation}\label{eq:reverse-identity-terminal}
        1-m_{uv}m_{vu}
        =
        -m_{uv}\big(\tau+a_v+N_v\big).
    \end{equation}
    Combining the above equation yields 
    \begin{equation}
      M_v=-\big(\tau+a_v+N_v\big)(\varphi_u-\varphi_v),\qquad u\sim v.
    \end{equation}

    We first exclude the case
    $
        \tau+a_v+N_v=0 .
    $
    By \eqref{eq:reverse-identity-terminal}, this gives
    $
        m_{uv}m_{vu}=1
    $
    for every $ u\sim v $.
    Then, it follows from \eqref{eq:h-coboundary} that
    \begin{align*}
        m_{uv}(\varphi_u-\varphi_v)&=m_{uv}h_{uv}=|m_{uv}|r_{uv}>0\\
        m_{vu}(\varphi_v-\varphi_u)&=m_{vu}h_{vu}=|m_{vu}|r_{vu}>0 .
    \end{align*}
    Since \(m_{uv}m_{vu}=1\), the two numbers \(m_{uv}\) and \(m_{vu}\) have the
    same sign, which contradicts \(\varphi_u-\varphi_v=-(\varphi_v-\varphi_u)\).  Hence $\tau+a_v+N_v=0$ is impossible.

    It remains to consider the case
    $
        \tau+a_v+N_v\ne0 .
    $
    For fixed \(v\), both \(M_v\) and \(\tau+a_v+N_v\) are independent of the
    neighbor \(u\).  Hence all neighbors \(u\sim v\) have the same value of
    \(\varphi_u-\varphi_v\), and therefore the same value of \(\varphi_u\).
    Equivalently, \(\varphi\) is constant on each distance-two component of the
    base graph.

    If \(\mathcal G\) is non-bipartite, the distance-two graph is
    connected.  Hence \(\varphi\) is constant on all of \(\mathbb{V}_{\mathcal G}\), and therefore
    \[
        h_{uv}=\varphi_u-\varphi_v=0 .
    \]
    This contradicts $h=pr\neq 0$.  Thus in the non-bipartite case \eqref{eq.QF-contraction} holds.

    If $\mathcal G$ is bipartite, we write
    $
    \mathbb V_{\mathcal G}=\mathbb V_+\sqcup \mathbb V_-
    $
    for its bipartition, meaning that every edge of $\mathcal G$ has one endpoint in $\mathbb V_+$ and the other endpoint in $\mathbb V_-$. Then the preceding
    argument shows that \(\varphi\) is constant on each side:
    \[  
        \varphi_i=c_+\quad\text{for }i\in\mathbb V_+,
        \qquad
        \varphi_i=c_-\quad\text{for }i\in\mathbb V_-.
    \]
    If \(c_+=c_-\), then \(y\) is constant and again \(h=0\), a contradiction. Thus $c_+\neq c_-$. Therefore the only possible non-trivial case is the alternating mode.
    For such a mode,
    $
        \varphi_v-\varphi_w=-(\varphi_u-\varphi_v)
    $
    whenever \((u,v)\to(v,w)\) is a non-backtracking transition.  If this
    alternating mode is non-zero, then \eqref{eq:bad-perron-terminal} implies
    \[
        1
        =
        -m_{uv}\sum_{\substack{w\sim v\\ w\ne u}}m_{vw}.
    \]
    On the other hand, the Dyson equation gives
    \[
        -1
        =
        m_{uv}(\tau+a_v)
        +
        m_{uv}\sum_{\substack{w\sim v\\ w\ne u}}m_{vw}.
    \]
    Combining the last two identities yields
    $
        m_{uv}(\tau+a_v)=0 .
    $
    Since \(m_{uv}\ne0\), we obtain
    \[
        \tau+a_v=0,
        \qquad  v\in \mathbb{V}_{\mathcal G}.
    \]
    Thus the only remaining obstruction is again the above exceptional case.
    This is precisely the exceptional case already excluded in the proof of
    Assumption (Sym), after the harmless shift \(\mathbf a\equiv0\), and therefore the same
    argument shows that it cannot occur at a point with
    \(\langle \im m(\tau)\rangle=0\).

    \medskip
    Under either Assumption (Sym) or Assumption (NB), equality in Jensen's
    inequality is impossible. Consequently, we always have \(\psi(\tau)>0\) when $\sigma(\tau)=0$.
    This concludes the proof.

\end{proof}

With the stability expansion and the nondegeneracy estimate
\(\psi+\sigma^2\sim1\) established above, we can now follow the strategy of
\cite{AEK2017singularities} to reduce the local analysis near a zero-density
boundary point to a scalar cubic equation. 
This scalar equation encodes the classification of the singularity. If
\(\sigma(\tau)\ne0\), the quadratic term is dominant and the boundary point is a
regular edge, where the density has square-root behavior. If
\(\sigma(\tau)=0\), then the nondegeneracy estimate forces
\(\psi(\tau)\sim1\), and the cubic term becomes dominant; this is the cusp case,
where the density has cubic-root behavior.

\begin{proposition}[Cubic equation]\label{pro:Cubic equation} 
    Let $\tau \in \partial \mathfrak{S}$ be a regular edge or cusp. Assume that there exists $\epsilon>0$ depending only on model parameters such that $\psi+\sigma^2\sim 1$ for all $z\in N_\epsilon(\tau)$. Then, for all $\omega\in(\tau-\epsilon,\tau+\epsilon)$, the following statements hold.
    The complex valued function 
    \begin{equation}\label{def.Theta}
    \Theta(\omega;\tau):=\, \avgB{l(\tau) \frac{m(\tau+\omega)-m(\tau)}{|\1m(\tau)|}},\qquad |\omega|\le\epsilon
    \end{equation}
    describes the change of $m$ around $\tau$ to leading order,
    \begin{equation}\label{eq.leading order of change}
    m(\tau+\omega)-m(\tau)
    =
    \Theta(\omega;\tau)|m(\tau)|r(\tau)+O(|\Theta|^2+|\omega|).
    \end{equation}

    Moreover, the function $\Theta(\omega;\tau)$ solves the approximate cubic equation
    \begin{equation}\label{eq:cubic-nonsym}
    \psi(\tau)\,\Theta(\omega;\tau)^3
    +\sigma(\tau)\,\Theta(\omega;\tau)^2
    +\mu(\tau)\,\omega
    =
    e(\omega;\tau),
    \end{equation}
    where \(\mu,\sigma,p,\psi\) are defined as in Lemma~\ref{lem:B-bad-direction-nonsym-local}.
    The error term $e(\omega;\tau)$ satisfies
    \begin{subequations}
    \label{eq:error-bounds-nonsym}
    \begin{align}
    |e(\omega;\tau)|
    &\lesssim
    |\omega|\,|\Theta(\omega;\tau)|+|\omega|^2,
    \\
    |\im e(\omega;\tau)|
    &\lesssim
    |\omega|\,\im \Theta(\omega;\tau).
    \end{align}
    \end{subequations}
\end{proposition}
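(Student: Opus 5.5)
The plan is to derive an exact quadratic vector equation for the increment of $m$ and reduce it, via the splitting $\mathbb C^d=\operatorname{span}\{r\}\oplus\Ran\mathcal P$ at the point $\tau$, to a scalar equation for its Perron component. By the argument at the start of the proof of Lemma~\ref{lem:B-bad-direction-nonsym-local}, $m$ is bounded and continuous on $N_\epsilon(\tau)$, and the following hold at $\tau$:
\begin{itemize}
\item $\im m(\tau)=0$, so $m(\tau)=p|m(\tau)|$ with $p$ the constant sign vector;
\item $U(\tau)=I$, $R(\tau)=I-F(\tau)$ and $\lambda_F(\tau)=1$;
\item Proposition~\ref{pro.I-F-second} gives $\|(I-F(\tau))^{-1}\mathcal P(\tau)\|_2\lesssim1$.
\end{itemize}
All of $l,r,F,\mathcal P,p,\mu,\sigma,\psi$ will be frozen at $\tau$.

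\textbf{Step 1: exact equation.} Set $\Delta:=m(\tau+\omega)-m(\tau)$ and $u:=\Delta/|m(\tau)|$, so that $\langle l u\rangle=\Theta$. Subtracting the Dyson equations at $\tau+\omega$ and $\tau$ gives the exact identity $\Delta=m(m+\Delta)(\omega+S\Delta)$, that is, $(I-m^2S)\Delta=\omega m^2+m\Delta(\omega+S\Delta)$. Dividing componentwise by $|m|$ and using $m^2=|m|^2$ and $m/|m|=p$ turns this into
\[
(I-F)u=\omega|m|+\omega\, p|m|u+p\,u\,Fu ,
\]
which is exact, with all coefficients real and evaluated at $\tau$. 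Continuity of $m$ (Theorem~\ref{thm.regularity}) makes $|u|_2$ as small as needed after shrinking $\epsilon$.

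\textbf{Step 2: the stable component.} Write $u=\Theta r+w$ with $w\in\Ran\mathcal P$. Applying $\mathcal P$ and $(I-F)^{-1}\mathcal P$ gives
\[
w=(I-F)^{-1}\mathcal P\big[\omega|m|+\omega p|m|u+p\,uFu\big].
\]
Since $|u|_2$ is small, I absorb the terms quadratic in $w$ to obtain the a priori bound $|w|_2\lesssim|\Theta|^2+|\omega|$. Feeding this back in yields
\[
w=\Theta^2(I-F)^{-1}\mathcal P[pr^2]+O(|\Theta|^3+|\omega|).
\]
This already gives \eqref{eq.leading order of change}, since $\Delta=|m|u=\Theta|m|r+|m|w$.

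\textbf{Step 3: the scalar equation.} Apply $l^\top$ to the exact equation; the left-hand side vanishes because $l^\top(I-F)=0$. Expand $\langle l\,p\,uFu\rangle$ using $u=\Theta r+w$ and $Fr=r$:
\[
\langle l\,p\,uFu\rangle=\Theta^2\langle plr^2\rangle+\Theta\,\big\langle plr,(I+F)w\big\rangle+O(|w|^2).
\]
Inserting the expansion of $w$ from Step 2 and recalling the definition of $\psi$ produces
\[
\mu\omega+\sigma\Theta^2+\psi\Theta^3=e,
\]
where $e$ collects $\omega\langle lp|m|u\rangle$, the terms $\Theta\cdot O(|\Theta|^3+|\omega|)$, and $O(|w|^2)$. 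This gives $|e|\lesssim|\omega||\Theta|+|\omega|^2+|\Theta|^4$.

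\textbf{Main obstacle: removing $|\Theta|^4$.} The stated bounds do not allow a $|\Theta|^4$ term, so it has to be absorbed. First, I will push the expansion of $w$ one order further, so that the $\Theta^4$ coefficient is explicit and analytic in $\Theta$. Then I use the cubic itself with $\psi+\sigma^2\sim1$:
\begin{itemize}
\item if $|\sigma|\gtrsim1$, the quadratic balance forces $|\Theta|^2\lesssim|\omega|$, so $|\Theta|^4\lesssim|\omega|^2$;
\item otherwise $\psi\sim1$, and $|\Theta|^3\lesssim|\omega|+|\sigma||\Theta|^2$.
\end{itemize}
In either case $|\Theta|^4\lesssim|\omega||\Theta|+|\omega|^2$, using smallness of $\Theta$ and, if needed, a bootstrap in $\omega$ from $\omega=0$ where $\Theta=0$. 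If this dichotomy proves delicate near $\sigma\approx0$ with $\sigma\neq0$, I would instead keep $\tau$-dependence uniform and follow the argument of \cite{AEK2017singularities} verbatim.

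\textbf{Imaginary part of $e$.} Since $\omega$ is real, $\im u=\im m(\tau+\omega)/|m(\tau)|\ge0$. By Theorem~\ref{thm.bound} its components are comparable, so $|\im u|_2\sim\im\Theta$. Every term in $e$ is a real-coefficient polynomial in $u$ that is either multiplied by $\omega$ or is of order at least four in $u$. Its imaginary part is therefore bounded by $(|\omega|+|u|^3)\im\Theta$. With $|u|^3\lesssim|\omega|$ from the previous step, this gives $|\im e|\lesssim|\omega|\im\Theta$.
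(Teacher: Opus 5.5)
Your proposal is correct. Its skeleton is the derivation the paper imports from \cite{AEK2017singularities}: the exact difference equation frozen at $\tau$, the splitting $u=\Theta r+w$ with $\mathcal P=I-rl^\top$, the identity $l^\top(I-F)=0$ to kill the left side, and $\psi$ read off from $\Theta\langle plr,(I+F)w\rangle$.

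\textbf{Where you differ: removing the $|\Theta|^4$ remainder.} The paper first proves $1/3$-H\"older continuity of $m$ on $N_\epsilon(\tau)$. It uses $\psi+\sigma^2\sim1$ on the whole neighbourhood, together with the expansion of the small eigenvalue $\beta$ in Lemma~\ref{lem:B-bad-direction-nonsym-local}, to get $\|R^{-1}\|_2\lesssim\langle\im m\rangle^{-2}$, and then runs the derivative argument of \cite{AEK2017singularities}. The bound $|\Theta|\lesssim|\omega|^{1/3}$ is therefore an input there.

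You instead obtain $|\Theta|^3\lesssim|\omega|$ as an output of the cubic itself. This needs the nondegeneracy only at the single point $\tau$ and never uses the $\beta$-expansion or derivative bounds.

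Your worry about $0<|\sigma(\tau)|\ll1$ does not arise under the paper's convention, which treats a nonzero $\sigma(\tau)$ as a model parameter. The two cases are then exactly $\sigma(\tau)=0$ and $|\sigma(\tau)|\sim1$, and your dichotomy closes. If you want constants uniform as $\sigma(\tau)\to0$, the continuity argument you mention does work:
\begin{itemize}
\item For $|\omega|\le c|\sigma|^3$, the branch starting from $\Theta(0)=0$ cannot reach the circle $|\Theta|=|\sigma|/(2\psi)$. On that circle $|\sigma\Theta^2+\psi\Theta^3|\gtrsim|\sigma|^3$, which dominates $\mu|\omega|+C(|\omega||\Theta|+|\omega|^2+|\Theta|^4)$. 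Inside the disc, the quadratic balance gives $|\Theta|^3\lesssim|\omega|$.
\item For $|\omega|\ge c|\sigma|^3$, the term $|\sigma||\Theta|^2$ is either absorbed or already $\lesssim|\omega|$.
\end{itemize}
The H\"older route gets this uniformity for free and also yields the regularity of $m$ as a by-product.

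\textbf{The imaginary-part bound needs tightening.} As you collect it, $e$ contains $\langle lp\,wFw\rangle$ and terms of the form $\Theta^2\times(\text{linear in }w)$. With only $|\im w|\le|\im u|$, their imaginary parts are a priori $O(|u|^2\im\Theta)$, which is not $O(|\omega|\im\Theta)$ at a cusp. Your claim that every term has degree at least four in $u$ or carries a factor $\omega$ holds only after substituting the $w$-equation twice.

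A cleaner way to do it:
\begin{itemize}
\item Take imaginary parts in $(I-F)w=\mathcal P[\omega|m|+\omega p|m|u+p\,uFu]$. This gives $|\im w|\lesssim(|\Theta|+|\omega|)\im\Theta$.
\item With $\tilde w:=w-\Theta^2(I-F)^{-1}\mathcal P[pr^2]$, the same step gives $|\im\tilde w|\lesssim(|\Theta|^2+|\omega|)\im\Theta$ and $|\tilde w|\lesssim|\Theta|^3+|\omega|$.
\item The exact identity $e=-\Theta\langle plr,(I+F)\tilde w\rangle-\langle lp\,wFw\rangle-\omega\langle lp|m|u\rangle$ then yields $|\im e|\lesssim(|\Theta|^3+|\omega|)\im\Theta$.
\item Combined with $|\Theta|^3\lesssim|\omega|$, this gives $|\im e|\lesssim|\omega|\im\Theta$.
\end{itemize}
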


\begin{proof}
  We shall use the notation introduced in Lemma~\ref{lem:B-bad-direction-nonsym-local}. By the argument in the proof of Lemma~\ref{lem:B-bad-direction-nonsym-local}, the solution \(m\) is uniformly bounded on \(N_\epsilon(\tau)\).
  We first prove that $m(z)$ is $1/3$-Hölder continuous in $N_\epsilon(\tau)$. 
  The key input is the cubic stability estimate
    $
    \psi+\sigma^2\sim 1
    $
    for all $z\in N_\epsilon(\tau)$.
  Using \eqref{eq.Rinverse-bound} and putting $\psi+\sigma^2\sim 1$ into \eqref{eq:beta-expansion-local} directly yields $\|R^{-1}\|_2\lesssim\langle\im m\rangle^{-2}$.
  The $1/3$-Hölder continuity then follows by the same argument as in \cite[Proof of Proposition~5.1 and Proof of Corollary~2.7]{AEK2017singularities}.

  The remaining argument is essentially identical to the \cite[Proposition 6.2]{AEK2017singularities}. 
  The only difference is notational: in the present non-symmetric setting the unstable direction is described by the right and left Perron eigenvectors \(r\) and \(l\), so that the decomposition is made with the projection \(\mathcal P=I-r l^{\top}\) and the scalar coordinate \(\Theta\) defined in \eqref{def.Theta}, whereas in the symmetric case one has \(r=l=f\) and \(\mathcal P\) reduces to the orthogonal projection \(Q=I-\langle f,\cdot\rangle f\). 
  With these replacements, and using the already established \(1/3\)-Hölder continuity of \(m\), the estimates and the derivation of the cubic equation proceed verbatim as in the symmetric case; we therefore refer the reader to \cite[Proposition 6.2]{AEK2017singularities} for the details.
\end{proof}

\begin{proof}[Proof of Theorem \ref{thm:singularities}]
   With Proposition~\ref{pro:Cubic equation} at hand, the proof is very similar to \cite[Proof of Theorem~2.6]{AEK2017singularities}. If \(\sigma(\tau)\neq 0\), then we directly have
  $\psi(z)+\sigma(z)^2\sim 1$ for $z\in N_\epsilon(\tau)$.
  Hence Proposition~\ref{pro:Cubic equation} applies. Following the proof of \cite[Theorem~2.6]{AEK2017singularities}, we conclude that the case \(\sigma(\tau)\neq 0\) corresponds to a regular edge, and the density exhibits square-root behavior there.

  If \(\sigma(\tau)=0\), and if \(S\) satisfies either \((\mathrm{Sym})\) or \((\mathrm{NB})\), then Proposition~\ref{prop:psi-positive-and-nondegenerate} implies that
  $
      \psi(z)\sim 1
  $
  for all $z\in N_\epsilon(\tau)$.
  Applying again the argument from \cite[Proof of Theorem~2.6]{AEK2017singularities}, we conclude that the condition $\sigma(\tau)=0$ and \(\psi(\tau)\sim 1\) corresponds to a regular cusp, and the density has cubic-root behavior there.
\end{proof}

\section{Bulk Stability}
This section investigates bulk stability. We first gives a rough estimate about the stability. It is useful when $z$ is not so close to the real axis. 
\begin{proposition}[Simple bound for the stability operator]\label{pro.simple-bound-stability}
Suppose \(S\in\R^{d\times d}\) is a nonnegative and irreducible matrix, and the solution $m(z)$ is uniformly bounded on $\Cp(I)$ where $I\subset\R$ is a closed interval.
Then, we have $\|(I-m(z)^2S)^{-1}\|_2\lesssim 1+\eta^{-1}$ for all $z\in\C_+(I)$.
\end{proposition}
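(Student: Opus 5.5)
The plan is to pass from $I-m^2S$ to the phase--modulus matrix $R=U-F$ via \eqref{eq.I-m2S-transform}, and then use a direct coercivity estimate for $R$ in the Perron-weighted norm. This estimate holds for every $z\in\Cp(I)$ and involves no spectral-gap argument. The point is that $U$ is a diagonal unitary, while $F$ is a contraction up to the factor $\lambda_F\le1$. The gap $1-\lambda_F$ is given explicitly by \eqref{eq.F-eigenvalue} and is of order at least $\eta$.

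\emph{Step 0 (large $|z|$).} Fix $K:=2\Sigma+2\|S\|_2^{1/2}+1$. For $|z|\ge K$ I will use the Stieltjes representation \eqref{def.v} with $\supp\tilde v_i\subset[-\Sigma,\Sigma]$. It gives $|m_i(z)|\le (|z|-\Sigma)^{-1}$, hence $\|m^2S\|_2\le \|S\|_2(|z|-\Sigma)^{-2}\le 1/2$, and a Neumann series yields $\|(I-m^2S)^{-1}\|_2\le 2$. So it suffices to treat $z\in\Cp(I)$ with $|z|\le K$. There \eqref{eq.m-bound} gives $|m_i(z)|\sim1$ for all $i$. By \eqref{eq.I-m2S-transform}, the outer diagonal factors have entries $m_i^2/|m_i|$ and $1/|m_i|$, both of modulus $\sim1$. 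Hence $\|(I-m^2S)^{-1}\|_2\lesssim\|R^{-1}\|_2$.

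\emph{Step 1 (weighted coercivity).} Let $r,l,\lambda_F$ be as in Lemma~\ref{lem:F-PF}, and set $Q_F:=\lambda_F^{-1}D(r)^{-1}FD(r)$ and $\pi:=lr$, as in the proof of Proposition~\ref{pro.I-F-second}. Since $U$ is diagonal, it commutes with $D(r)$, so
\[
D(r)^{-1}RD(r)=U-\lambda_FQ_F .
\]
Here $Q_F\mathbf 1=\mathbf 1$ and $\pi^\top Q_F=\pi^\top$, so Jensen's inequality gives $|Q_Fx|_\pi\le|x|_\pi$. Moreover $|U_{ii}|=1$, so $|Ux|_\pi=|x|_\pi$. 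Therefore
\[
|(U-\lambda_FQ_F)x|_\pi\ge |x|_\pi-\lambda_F|x|_\pi=(1-\lambda_F)|x|_\pi .
\]
By \eqref{eq.vector-comparable}, $r_i\sim l_i\sim\pi_i\sim1$. Hence both the conjugation by $D(r)$ and the change between $|\cdot|_\pi$ and $|\cdot|_2$ cost only constants, and we get $\|R^{-1}\|_2\lesssim(1-\lambda_F)^{-1}$.

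\emph{Step 2 (lower bound on the gap).} By \eqref{eq.F-eigenvalue},
\[
1-\lambda_F=\frac{\eta\langle l|m|\rangle}{\langle l\,\im m/|m|\rangle}.
\]
The numerator is $\gtrsim\eta$ because $l_i\sim1$ and $|m_i|\sim1$. The denominator is $\le\langle l\rangle\lesssim1$ because $\im m_i\le|m_i|$. Hence $1-\lambda_F\gtrsim\eta$, so $\|R^{-1}\|_2\lesssim\eta^{-1}$. Combining this with Step~0 gives $\|(I-m^2S)^{-1}\|_2\lesssim1+\eta^{-1}$.

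The main point of care is uniformity of constants. The comparabilities $|m_i|\sim1$ and $l_i\sim r_i\sim1$ from Theorem~\ref{thm.bound} and Lemma~\ref{lem:F-PF} degrade like powers of $(1+|z|)^{-1}$ for large $|z|$. This is exactly why the large-$|z|$ region is split off in Step~0 and handled by the Neumann series rather than by the weighted estimate.
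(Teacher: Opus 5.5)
Your argument is correct, but it takes a different route from the paper's.

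\textbf{The paper's route.} The paper never passes to $R=U-F$ or to the Perron vectors in this proof.

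For $\eta\le\eta_0$, it reads off from the imaginary-part identity \eqref{eq:QVE-imag-bound} that $|m|^2S\,\im m=\im m-\eta|m|^2\le(1-c\eta)\im m$. So $\im m$ is a strictly sub-invariant positive vector for $|m|^2S$.

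It then works in the max-norm $|\cdot|_m$ weighted by $\im m$. The pointwise inequality $|u|\le|m|^2S|u|+|(I-m^2S)u|$ gives $|u|_m\le(c\eta)^{-1}|(I-m^2S)u|_m$. It returns to $|\cdot|_2$ using $\im m_i\sim\im m_j$ from \eqref{eq.m-Im}.

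Large $\eta$ is handled by a Neumann series, as in your Step~0.

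\textbf{Your route.} You conjugate to $U-\lambda_FQ_F$ and combine two facts: the Markov matrix $Q_F$ is a contraction on $\ell^2(\pi)$, and $U$ is unitary. This is the $\ell^2(\pi)$ version of the trivial bound \eqref{eq:rho-close-1} in Lemma~\ref{lem:UminusQ-graph-resonance}.

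\textbf{What each buys.} The paper's route is more self-contained. It needs only the imaginary-part identity and \eqref{eq.m-Im}, not Lemma~\ref{lem:F-PF} or the Markov normalization.

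Yours reuses that machinery and yields a more informative intermediate bound, $\|R^{-1}\|_2\lesssim(1-\lambda_F)^{-1}=\delta/(\mu\eta)$, with $\delta,\mu$ as in Lemma~\ref{lem:B-bad-direction-nonsym-local}. This ties the $\eta^{-1}$ loss to the Perron gap, which is exactly the leading term $\mu\eta/\delta$ of $\beta$ in that lemma. It also retains the factor $\delta\sim\langle\im m\rangle$. The paper's method contains this factor too, but discards it when it bounds $|m_i|^2/\im m_i$ below by a constant.

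\textbf{A small correction to your closing remark.} The comparability $l_i\sim r_i\sim1$ in Lemma~\ref{lem:F-PF} is uniform on $\Cp(I)$, because the matrix $T(z)$ there is rescaled by $(1+|z|)^{2k}$. Only $|m_i|$ scales like $(1+|z|)^{-1}$, and this costs only $1-\lambda_F\gtrsim\eta/(1+|z|)$. For bounded $I$ one has $1+|z|\lesssim1+\eta$ on $\Cp(I)$, so your Step~0 is harmless but not actually needed.
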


\begin{proof}
We first consider the regime of large \(\eta\). Theorem~\ref{thm.bound} implies that 
$
    |m_i(z)|\lesssim (|z|+1)^{-1}\le \eta^{-1}.
$
Therefore, there exists a finite constant $C>0$ depending only on model parameters such that
\[
    \|m^2S\|_2
    \le \|S\|_2\max_i |m_i|^2
    \le C\|S\|_2\eta^{-2}.
\]
Choosing \(\eta_0>0\) depending only on model parameters sufficiently large, we have
$
    \|m^2S\|_2\le 1/2
$ for all $\eta>\eta_0$.
Hence \(I-m^2S\) is invertible, and the Neumann series yields
\begin{equation}\label{eq.large-regime-inverse-bound}
    \|(I-m^2S)^{-1}\|_2
    \le
    \sum_{k=0}^\infty \|m^2S\|_2^k
    =
    \frac1{1-\|m^2S\|_2}
    \le 2.
\end{equation}
Thus
$
    \|(I-m^2S)^{-1}\|_2\lesssim 1+\eta^{-1}
$
for \(\eta>\eta_0\).

It remains to consider \(0<\eta\le\eta_0\). 
Equality \eqref{eq:QVE-imag-bound}, $0<\eta\le\eta_0$ and $S\im m>0$ imply that there exists a finite constant $c>0$ depending  only on model parameters such that
\begin{equation*}
  \forall 0<\eta\le\eta_0,\qquad\frac{|m_i|^2}{\im m_i}=\frac{1}{\eta+(S\im m)_i}\ge c,\qquad i=1,\dots d.
\end{equation*}
Consequently,
\[
   (|m|^2S\im m)_i
    =
    \im m_i-\eta |m_i|^2
    \le
    (1-c\eta)\im m_i,\qquad i=1,\dots,d.
\]

For fixed \(z\in\mathbb C_+\) with \(0<\eta\le\eta_0\), define the weighted max norm
\[
    |x|_{m}:=\max_i \frac{|x_i|}{\im m_i(z)},\qquad x\in\mathbb C^d.
\]
Since \(|x|\le |x|_m \im m\) componentwise and
\(|m|^2S\) is nonnegative, we get
\[
    \big||m|^2Sx\big|
    \le
    |x|_m |m|^2S\im m
    \le
    (1-c\eta)|x|_m \im m,
\]
which implies that
$
    \bigl||m|^2Sx\bigr|_m
    \le
    (1-c\eta)|x|_m .
$

Now take an arbitrary vector \(u\in\mathbb{C}^d\), and we have the decomposition
$
    u=m^2Su+ (I-m^2S)u.
$
Taking absolute values componentwise and using \(S\ge0\), we obtain
$
    |u|\le|m|^2S|u|+|(I-m^2S)u|.
$
Applying the weighted max norm and using the contraction estimate above gives
\[
    |u|_m
    \le
    (1-c\eta)|u|_m+|(I-m^2S)u|_m,
\]
which implies 
$
    |u|_m
    \le
    (c\eta)^{-1}|(I-m^2S)u|_m 
$
for arbitrary $u\in\C^d$ when $0<\eta\le\eta_0$.

It remains to pass from \(|\cdot|_m\) to the Euclidean norm. By definition and direct computation, we have 
\[
    |u|_2
    \le
    \sqrt{d}(\max_i \im m_i)|u|_m\qquad\text{and}\qquad  |(I-m^2S)u|_m
    \le
    \frac{1}{\min_i \im m_i}|(I-m^2S)u|_2 .
\]
Combining the comparability $\im m_i\sim\im m_j$ from Theorem \ref{thm.bound} yields
\[
    |u|_2
    \le
    \frac{\sqrt{d}}{c\eta}
    \frac{\max_i \im m_i}{\min_i \im m_i}
    |(I-m^2S)u|_2
    \le
    \frac{C}{\eta}|(I-m^2S)u|_2,\qquad u\in\C^d.
\]
Thus we have
$
    \|(I-m^2S)^{-1}\|_2\lesssim 1+\eta^{-1}
$
when $0<\eta\le\eta_0$. This concludes the proof.
\end{proof}
\bigskip
The above stability bound deteriorates as \(\eta\downarrow 0\). Therefore, a different argument is needed in the small-\(\eta\) regime. 
In fact, the stability behavior is closely related to the structure of the support digraph $\mathcal{G}_S$ associated with $S$. This motivates the following definition of the resonance set.

\begin{definition}[Resonance set]\label{def.resonance set}
For a nonnegative matrix \(Q\in \mathbb R^{d\times d}\), the resonance set associated with \(Q\) is defined by
\[
\mathscr{R}_Q
:=
\left\{
\diag(v_1,\dots,v_d):
v_1,\dots,v_d\in \mathbb S^1,
\prod_{i\in \mathscr{C}} v_i = 1~
\text{for every directed cycle } \mathscr{C} \text{ of } E_Q
\right\},
\]
where directed cycles are defined in \eqref{def.cycle}.
\end{definition}

The lemma below characterizes the resonance set \(\mathscr R_Q\) as the zero set of the edge defect \(\Delta_Q\). 
Throughout the sequel, we will freely switch between these equivalent descriptions of the resonance set, choosing the one most convenient for the argument at hand.

\begin{lemma}\label{lem.zero-set-of-edge-defect}
Let \(Q\in \mathbb R^{d\times d}\) be a nonnegative irreducible matrix. For every unitary diagonal matrix 
\[
V=\diag(v_1,\dots,v_d),\qquad |v_i|=1,
\]
define
\[
\Delta_Q(V)
:=
\inf_{\gamma\in (\mathbb S^1)^d}
\max_{(i,j)\in E_Q}
|v_i-\bar\gamma_i\gamma_j|.
\]
Then we have
$
\Delta_Q(V)=0$ if and only if $ V\in \mathscr{R}_Q.
$
\end{lemma}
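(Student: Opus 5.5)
The plan is to prove the two implications separately, using compactness of $(\mathbb S^1)^d$ to replace the infimum in $\Delta_Q(V)$ by a minimum, and irreducibility of $Q$ (strong connectivity of $\mathcal G_Q$) to construct the potentials $\gamma$ explicitly.

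\emph{Step 1: $\Delta_Q(V)=0$ implies $V\in\mathscr R_Q$.} The function $\gamma\mapsto\max_{(i,j)\in E_Q}|v_i-\bar\gamma_i\gamma_j|$ is continuous on the compact set $(\mathbb S^1)^d$. So if $\Delta_Q(V)=0$, there is a $\gamma$ with $v_i=\bar\gamma_i\gamma_j$ for every edge $(i,j)\in E_Q$. For a directed cycle $\mathscr C: i_0\to i_1\to\cdots\to i_\ell=i_0$, taking the product over its edges gives
\[
\prod_{k=0}^{\ell-1}v_{i_k}=\prod_{k=0}^{\ell-1}\bar\gamma_{i_k}\gamma_{i_{k+1}}=1,
\]
because the product telescopes ($|\gamma_i|=1$) and $i_\ell=i_0$. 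Hence $V\in\mathscr R_Q$.

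\emph{Step 2: $V\in\mathscr R_Q$ implies $\Delta_Q(V)=0$.} We build $\gamma$ so that $\gamma_j=v_i\gamma_i$ on every edge $(i,j)$. Fix a root $1$ and set $\gamma_1=1$. By irreducibility, every $j$ is reached by some directed path $1=i_0\to\cdots\to i_n=j$; define $\gamma_j:=\prod_{k=0}^{n-1}v_{i_k}$. To see this is independent of the path, take two paths $P,P'$ from $1$ to $j$ and a return path $P''$ from $j$ to $1$, which exists by irreducibility. Then $PP''$ and $P'P''$ are closed directed walks. Each has vertex-weight product $1$, because a closed walk decomposes, as a multiset of edges, into directed cycles (peel off a cycle at the first repeated vertex and induct on length), and each cycle has product $1$ by hypothesis. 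Hence
\[
\prod_P v\cdot\prod_{P''}v=1=\prod_{P'}v\cdot\prod_{P''}v,
\]
so $\prod_P v=\prod_{P'}v$ and $\gamma$ is well defined. For any edge $(i,j)\in E_Q$, extending a path to $i$ by this edge gives $\gamma_j=\gamma_iv_i$, that is, $v_i=\bar\gamma_i\gamma_j$. Thus the maximum vanishes at this $\gamma$, and $\Delta_Q(V)=0$.

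The main point needing care is the well-definedness in Step 2. Specifically, one must check that the closed-walk decomposition respects the weighting: the weight is attached to the tail vertex of each edge, and each edge of a cycle contributes its tail exactly once. With this convention the cycle product $\prod_{i\in\mathscr C}v_i$ in Definition~\ref{def.resonance set} is exactly the product of tail weights over the edges of the cycle, where vertices are counted with multiplicity if a cycle in the sense of \eqref{def.cycle} repeats vertices. The decomposition is then compatible with the definition.
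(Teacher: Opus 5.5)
Your proof is correct and follows essentially the same route as the paper. One direction uses compactness to attain the infimum and then a telescoping product around cycles. The other builds a rooted potential along directed paths and checks path-independence by closing two paths with a common return path.

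The differences are cosmetic. You propagate from the root outward instead of toward it, and your decomposition of closed walks into simple cycles is an extra safeguard the paper omits. The paper does not need it because its definition \eqref{def.cycle} of a directed cycle already allows repeated vertices, with the product counted with multiplicity.
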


\begin{proof}
We prove the two implications separately.

First assume that \(\Delta_Q(V)=0\). Since \((\mathbb S^1)^d\) is compact and the function
\[
    \gamma\mapsto \max_{(i,j)\in E_Q}|v_i-\bar\gamma_i\gamma_j|
\]
is continuous, the infimum is attained. Hence by definition there exists
\(\gamma=(\gamma_1,\dots,\gamma_d)\in (\mathbb S^1)^d\) such that
$
v_i=\bar\gamma_i\gamma_j
$
for all $(i,j)\in E_Q$.
For any directed cycle
\[
\mathscr{C}:i_0\to i_1\to \cdots \to i_{\ell-1}\to i_\ell=i_0,
\]
it follows that
$
\prod_{r=0}^{\ell-1} v_{i_r}
=
\prod_{r=0}^{\ell-1} \bar\gamma_{i_r}\gamma_{i_{r+1}}
=1.
$
Hence \(V\in \mathscr{R}_Q\) by Definition \ref{def.resonance set}.

Next assume that \(V\in \mathscr{R}_Q\). Since \(Q\) is irreducible, its support digraph is strongly connected. Fix a root vertex \(o\in \{1,\dots,d\}\), and set
$
\gamma_o:=1.
$
For any vertex \(u\), choose a directed path
\[
u=i_0\to i_1\to \cdots \to i_\ell=o
\]
from \(u\) to \(o\), and define
\begin{equation}\label{def.gammau}
  \gamma_u:=\prod_{r=0}^{\ell-1} \bar v_{i_r}.  
\end{equation}

We first show that this definition is independent of the chosen path.
Indeed, let
\[
P_1:\quad u=i_0\to i_1\to \cdots \to i_{\ell_1}=o,
\qquad
P_2:\quad u=j_0\to j_1\to \cdots \to j_{\ell_2}=o
\]
be two directed paths from \(u\) to \(o\). Since the graph is strongly connected, there exists a directed path
\[
P_3:\quad o=k_0\to k_1\to \cdots \to k_{\ell_3}=u
\]
from \(o\) back to \(u\). Then \(P_1\cup P_3\) and \(P_2\cup P_3\) are directed cycles. Since \(V\in \mathscr{R}_Q\), the cycle condition gives
\[
\Bigl(\prod_{r=0}^{\ell_1-1}  v_{i_r}\Bigr)\Bigl(\prod_{s=0}^{\ell_3-1}  v_{k_s}\Bigr)=1,
\qquad
\Bigl(\prod_{r=0}^{\ell_2-1}  v_{j_r}\Bigr)\Bigl(\prod_{s=0}^{\ell_3-1} v_{k_s}\Bigr)=1.
\]
Therefore, it follows that
$
\prod_{r=0}^{\ell_1-1} \bar v_{i_r}
=
\prod_{r=0}^{\ell_2-1} \bar v_{j_r},
$
so \(\gamma_u\) is well defined.

For any \((i,j)\in E_Q\), choose a directed path from \(j\) to \(o\),
\[
j=i_0\to i_1\to \cdots \to i_\ell=o.
\]
Appending the edge \(i\to j\), we obtain a directed path from \(i\) to \(o\). By Definition \eqref{def.gammau},
\[
\bar\gamma_i
=
v_i\Bigl(\prod_{r=0}^{\ell-1}  v_{i_r}\Bigr)
=
v_i\bar\gamma_j.
\]
Thus, it follows that
$v_i=\bar\gamma_i\gamma_j$ for all $(i,j)\in E_Q$,
which implies
$
\max_{(i,j)\in E_Q}|v_i-\bar\gamma_i\gamma_j|=0.
$
Hence we have \(\Delta_Q(V)=0\), and this proves
$
\Delta_Q(V)=0
\Longleftrightarrow
V\in \mathscr{R}_Q.
$
\end{proof}

\bigskip
The following lemma provides the basic stability estimate for a Markov matrix in terms of its resonance set. 
It states that \(U-\lambda Q\) cannot have a small singular direction unless the diagonal phase \(U\) is close to the resonance set \(\mathscr R_Q\); the lower bound contains both the trivial contribution \(1-\lambda\) and the genuinely graph-theoretic contribution \(\dist(U,\mathscr R_Q)^2\). 

We will apply this lemma to the Markov matrix \(Q_F\) obtained by normalizing the matrix \(F\), as in Proposition~\ref{pro.I-F-second}.
This normalization allows us to transfer the stability analysis of the original operator to a Markov-type problem, where the only possible obstruction is precisely described by the resonance set.

\begin{lemma}\label{lem:UminusQ-graph-resonance}
Let \(Q=(q_{ij})_{i,j=1}^d\in\mathbb R^{d\times d}\) be a nonnegative irreducible matrix such that \(Q\mathbf 1=\mathbf 1\).
Then there exists a constant \(c>0\) depending only on the support of \(Q\), such that for every \(0\le \lambda\le 1\), every diagonal unitary matrix
\[
U=\diag(u_1,\dots,u_d),\qquad |u_i|=1,
\]
and every \(\mathbf z\in\mathbb C^d\),
\begin{equation}\label{eq.bulk-stability-lemma}
  |(U-\lambda Q)\mathbf z|_\infty
\ge\max\left\{1-\lambda,c\kappa_Q^{2d}\dist(U,\mathscr{R}_Q)^2\right\}|\mathbf z|_\infty,
\end{equation}
where
$
\kappa_Q:=\min\{q_{ij}:q_{ij}>0\}
$
and $\dist(U,\mathscr{R}_Q):=\min_{V\in\mathscr{R}_Q}\max_{i}|u_i-v_i|$.
\end{lemma}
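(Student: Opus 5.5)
The plan is to prove the two lower bounds in \eqref{eq.bulk-stability-lemma} separately.

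\emph{The trivial bound $1-\lambda$.} This is immediate. Since $Q\ge0$ and $Q\mathbf 1=\mathbf 1$, we have $|Q\mathbf z|_\infty\le|\mathbf z|_\infty$. Since $U$ is diagonal unitary, $|U\mathbf z|_\infty=|\mathbf z|_\infty$. The triangle inequality then gives $|(U-\lambda Q)\mathbf z|_\infty\ge(1-\lambda)|\mathbf z|_\infty$.

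\emph{The resonance bound: setup.} Normalize $|\mathbf z|_\infty=1$ and set $\varepsilon:=|(U-\lambda Q)\mathbf z|_\infty$. The goal is to show $\dist(U,\mathscr R_Q)\lesssim \kappa_Q^{-d}\varepsilon^{1/2}$. We may assume $\varepsilon\le c_0\kappa_Q^{2d}$ for a small $c_0$; otherwise the claim is trivial because $\dist(U,\mathscr R_Q)\le2$. Pick $i_0$ with $|z_{i_0}|=1$. The $i_0$-th row gives $u_{i_0}z_{i_0}=\lambda\sum_j q_{i_0j}z_j+O(\varepsilon)$. The right-hand side is a $\lambda$-multiple of a convex combination of points in the closed unit disc, and the left-hand side has modulus $1$. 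So the convex combination has modulus $\ge1-\varepsilon$.

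\emph{Quantitative strict convexity.} Use the identity $\sum_j q_{ij}|z_j-w|^2=\sum_j q_{ij}|z_j|^2-|w|^2\le 1-|w|^2$, where $w=\sum_j q_{ij}z_j$. Together with $q_{ij}\ge\kappa_Q$ on the support, this gives $|z_j-u_{i_0}z_{i_0}|\lesssim(\varepsilon/\kappa_Q)^{1/2}$ for every out-neighbour $j$ of $i_0$. In particular $|z_j|\ge1-C(\varepsilon/\kappa_Q)^{1/2}$. Now iterate along directed paths: irreducibility gives paths of length at most $d-1$ from $i_0$ to every vertex. At each step the starting row has $|z_i|\ge 1-(\text{accumulated error})$, and the same convexity argument applies with the current error in place of $\varepsilon$. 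This yields $|z_j|\ge 1/2$ for all $j$. Writing $\gamma_j:=z_j/|z_j|\in\mathbb S^1$, we obtain for every edge $(i,j)\in E_Q$
\[
|u_i\gamma_i-\gamma_j|\lesssim \kappa_Q^{-d}\varepsilon^{1/2},
\]
with the constant depending only on $d$. Hence $\Delta_Q(U)\lesssim\kappa_Q^{-d}\varepsilon^{1/2}$ in the notation of Lemma~\ref{lem.zero-set-of-edge-defect}.

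\emph{Main obstacle: from $\Delta_Q$ to $\dist(\cdot,\mathscr R_Q)$.} The remaining step, which I expect to be the main difficulty, is the linear comparison $\dist(U,\mathscr R_Q)\lesssim\Delta_Q(U)$. Lemma~\ref{lem.zero-set-of-edge-defect} only gives this qualitatively, as $\Delta_Q=0\iff V\in\mathscr R_Q$; a compactness argument would then lose the explicit dependence on $\varepsilon$. I will instead use the group structure. The set $\mathscr R_Q$ is the closed subgroup of the torus $(\mathbb S^1)^d$ cut out by the characters $\chi_{\mathscr C}(V)=\prod_{i\in\mathscr C}v_i$, where it suffices to take the finitely many simple cycles $\mathscr C$. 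Then:
\begin{itemize}
\item an approximate representation $u_i\approx\bar\gamma_i\gamma_j$ on all edges gives $|\chi_{\mathscr C}(U)-1|\le d\,\Delta_Q(U)$ for every simple cycle $\mathscr C$;
\item passing to angles, the defining characters are integer linear forms, so the distance to their common kernel (a closed subgroup) is bounded by a constant depending only on these finitely many integer forms, hence only on the support, times $\max_{\mathscr C}|\chi_{\mathscr C}(U)-1|$. This is a Hoffman-type bound for a finite system of linear equations modulo $2\pi\mathbb Z$.
\end{itemize}
Alternatively, one can build $V$ explicitly as in the second half of the proof of Lemma~\ref{lem.zero-set-of-edge-defect}: fix a spanning in-tree to a root, set $v_i:=\bar\gamma_i\gamma_{j(i)}$ using the tree edge out of $i$, and check that the cycle conditions can be corrected with cost $O(\Delta_Q)$. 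Either route gives $\dist(U,\mathscr R_Q)\le C\kappa_Q^{-d}\varepsilon^{1/2}$. Squaring yields $\varepsilon\ge c\,\kappa_Q^{2d}\dist(U,\mathscr R_Q)^2$, which combined with the trivial bound proves \eqref{eq.bulk-stability-lemma}.
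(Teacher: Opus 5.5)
There is a genuine gap in how you propagate along paths; the rest of your outline follows the paper's structure. That structure is: the trivial bound from the triangle inequality, then edge-by-edge phase alignment via strict convexity, then cycle characters plus a lattice argument to pass from the edge defect to $\dist(U,\mathscr R_Q)$.

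\textbf{The gap.} You derive the modulus bound for an out-neighbour from the position bound, $|z_j|\ge 1-C(\varepsilon/\kappa_Q)^{1/2}$. You then rerun the convexity argument with this ``current error'' in place of $\varepsilon$. Each such step takes a square root.
\begin{itemize}
\item If $1-|z_i|\le\delta$, then $1-|w_i|^2\lesssim\delta+\varepsilon$. The identity only gives $|z_j-w_i|\lesssim((\delta+\varepsilon)/\kappa_Q)^{1/2}$.
\item So after $k$ steps the modulus error is of order $\varepsilon^{2^{-k}}$.
\item This suffices for $|z_j|\ge\frac12$. It does not give the claimed bound $|u_i\gamma_i-\gamma_j|\lesssim\kappa_Q^{-d}\varepsilon^{1/2}$ on edges whose tail $i$ is at distance $k\ge1$ from $i_0$. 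Already $|u_i\gamma_i-u_iz_i|=1-|z_i|$ is of order $\varepsilon^{2^{-k}}\gg\varepsilon^{1/2}$ there, and the alignment is only of order $\varepsilon^{2^{-k-1}}$.
\item In the worst case you would end with roughly $\varepsilon\gtrsim\dist(U,\mathscr R_Q)^{2^{d}}$, not the quadratic bound.
\end{itemize}

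\textbf{The fix: linear propagation of the moduli} (the paper's Step~1).
\begin{itemize}
\item From $|z_i|\le\lambda\sum_k q_{ik}|z_k|+\varepsilon$, with $h_i:=1-|z_i|$ and $\lambda\ge\frac12$, one gets $\sum_k q_{ik}h_k\le2(h_i+\varepsilon)$.
\item Hence $h_j\le2\kappa_Q^{-1}(h_i+\varepsilon)$ on every edge, and $h_i\lesssim\kappa_Q^{-(d-1)}\varepsilon$ for all $i$.
\item Your own identity gives the same if you apply it to moduli rather than positions: $\sum_j q_{ij}|z_j|^2\ge|w_i|^2$ yields $1-|z_j|^2\le(1-|w_i|^2)/\kappa_Q$.
\end{itemize}
Only once all moduli are $1-O(\kappa_Q^{-(d-1)}\varepsilon)$ should you apply the convexity estimate, once per row. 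Using $|u_iz_i-w_i|\le\varepsilon+(1-\lambda)\le2\varepsilon$, this gives $|z_j-u_iz_i|\lesssim\kappa_Q^{-d/2}\varepsilon^{1/2}$, and hence $\Delta_Q(U)\lesssim\kappa_Q^{-d}\varepsilon^{1/2}$.

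\textbf{On the step from $\Delta_Q$ to $\dist(U,\mathscr R_Q)$.} Your character route is the paper's Step~3, and the bound you claim is true. However, the ``Hoffman-type bound modulo $2\pi\mathbb Z$'' needs one more observation. The simple-cycle vectors are in general $\mathbb Z$-dependent, so the integer vector nearest to $M\hat\theta/2\pi$ must be shown to lie in the range of $M$. There are two ways to do this.
\begin{itemize}
\item Note that this range is a rational subspace, so integer vectors off it stay a fixed distance away. Hence for small defect the nearest integer vector is consistent.
\item Or do as the paper does: pass to a $\mathbb Z$-basis $\mathbf a_1,\dots,\mathbf a_r$ of the lattice generated by the cycle vectors. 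The matrix with these rows has full row rank, so it has a right inverse $A^\dagger$, and $\tilde\theta=\hat\theta-A^\dagger\mathbf b$ works.
\end{itemize}

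Your alternative spanning-tree construction is too vague as stated. Membership in $\mathscr R_Q$ requires $v_i=\bar\gamma'_i\gamma'_j$ for every out-edge of $i$, not only the tree edge. The character route suffices.
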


\begin{proof}
By homogeneity, we assume \(|\mathbf z|_\infty=1.\)
Since \(Q\) is irreducible, its support digraph $\mathcal{G}_Q$ is strongly connected. Choose \(i_*\in\{1,\dots,d\}\) such that
$
|z_{i_*}|=1.
$
From
\[
u_i z_i-\lambda\sum_{k=1}^d q_{ik}z_k=((U-\lambda Q)\mathbf z)_i
\]
and the triangle inequality, we obtain
\begin{equation}\label{eq:uz-triangle}
  1=|u_{i_*}z_{i_*}|
\le \lambda\sum_{k=1}^d q_{i_*k}|z_k|+|(U-\lambda Q)\mathbf z|_\infty
\le \lambda+|(U-\lambda Q)\mathbf z|_\infty.
\end{equation}
Therefore, it follows that
\begin{equation}\label{eq:rho-close-1}
1-\lambda\le |(U-\lambda Q)\mathbf z|_\infty.
\end{equation}

It remains to prove $|(U-\lambda Q)\mathbf z|_\infty\ge c\kappa_Q^{2d}\dist(U,\mathscr{R}_Q)^2$.
Since \(\dist(U,\mathscr{R}_Q)\le 2\) and \(0\le 1-\lambda\le 1\), the desired estimate is trivial whenever
$
|(U-\lambda Q)\mathbf z|_\infty\ge \kappa_Q^{2d}\epsilon_0,
$
provided \(\epsilon_0>0\) is chosen sufficiently small. Thus it remains to consider the case
\begin{equation}\label{eq:small-epsilon-graph}
|(U-\lambda Q)\mathbf z|_\infty\le \kappa_Q^{2d}\epsilon_0,
\end{equation}
where $\kappa_Q^{2d}\epsilon_0$ is small enough.
In particular, combining \eqref{eq:rho-close-1}, we have
$
\lambda\ge 1-\kappa_Q^{2d}\epsilon_0\ge \frac12.
$

Next, we prove
\[
|(U-\lambda Q)\mathbf z|_\infty
\ge c\kappa_Q^{2d}\dist(U,\mathscr{R}_Q)^2\,|\mathbf z|_\infty
\]
in three steps.

\medskip
\noindent\textit{Step 1: propagation of the moduli.}
For each \(i\), define
$
h_i:=1-|z_i|\in[0,1].
$
The same argument as \eqref{eq:uz-triangle} implies
\[
1-h_i=|z_i|
\le \lambda\sum_{k=1}^d q_{ik}|z_k|+|(U-\lambda Q)\mathbf z|_\infty
=\lambda\Bigl(1-\sum_{k=1}^d q_{ik}h_k\Bigr)+|(U-\lambda Q)\mathbf z|_\infty.
\]
Rearranging the above inequality yields
\[
\lambda\sum_{k=1}^d q_{ik}h_k
\le h_i-(1-\lambda)+|(U-\lambda Q)\mathbf z|_\infty
\le h_i+|(U-\lambda Q)\mathbf z|_\infty.
\]
Since \(\lambda\ge \frac12\), it follows that
\begin{equation}\label{eq:weighted-h}
\sum_{k=1}^d q_{ik}h_k\le 2\bigl(h_i+|(U-\lambda Q)\mathbf z|_\infty\bigr).
\end{equation}
If \(q_{ij}>0\), then \eqref{eq:weighted-h} implies
$
\kappa_Q h_j\le q_{ij}h_j\le 2\bigl(h_i+|(U-\lambda Q)\mathbf z|_\infty\bigr),
$
and therefore
\begin{equation}\label{eq:edge-h-bound}
h_j\le 2\kappa_Q^{-1}\bigl(h_i+|(U-\lambda Q)\mathbf z|_\infty\bigr),
\qquad(i,j)\in E_Q.
\end{equation}

Since \(Q\) is irreducible, for any \(i,j\) there exists a directed path
\[
  i=i_0\to i_1\to\cdots\to i_\ell=j .
\]
Choosing a shortest such path, it has no repeated vertices; otherwise one could
remove a cycle and obtain a shorter path. Hence the path contains at most \(d\)
vertices, and therefore \(\ell\le d-1\).
Starting from
$
h_{i_*}=1-|z_{i_*}|=0,
$
and iterating \eqref{eq:edge-h-bound} along the shortest directed path from \(i_*\) to any given \(i\), we obtain
$
h_i\le C\kappa_Q^{-(d-1)}|(U-\lambda Q)\mathbf z|_\infty,
$
where \(C>0\) depends only on the support of \(Q\). Equivalently,
\begin{equation}\label{eq:modulus-close-1}
|z_i|\ge 1-C\kappa_Q^{-(d-1)}|(U-\lambda Q)\mathbf z|_\infty,
\qquad i=1,\dots,d.
\end{equation}

\medskip
\noindent\textit{Step 2: one-step phase alignment on every positive edge.}
Fix \((i,j)\in E_Q\). For simplicity, define
\[
s_i:=\sum_{k=1}^d q_{ik}z_k.
\]
Then, it follows from $u_i z_i-\lambda s_i=((U-\lambda Q)\mathbf z)_i$ that
$
|u_i z_i-\lambda s_i|\le |(U-\lambda Q)\mathbf z|_\infty.
$
By \eqref{eq:modulus-close-1}, \eqref{eq:small-epsilon-graph} and \(\epsilon_0\) sufficiently small,
\begin{equation}\label{eq:s-lowerbound}
  |s_i|\ge \lambda|s_i|\ge |u_i z_i|-|(U-\lambda Q)\mathbf z|_\infty\ge 1-C\kappa_Q^{-(d-1)}|(U-\lambda Q)\mathbf z|_\infty>1-C\kappa_Q^{d+1}\epsilon_0>0.
\end{equation}

For each fixed \(i\), write the complex number \(s_i\) in polar form as
$
s_i=e^{\mathrm i\theta_i}|s_i|,
$
where \(\theta_i=\arg(s_i)\).
We then rotate the complex plane by the angle \(-\theta_i\), and express the rotated quantity \(z_k\) in terms of its real and imaginary parts by setting
\[
e^{-\mathrm i\theta_i}z_k=:x_k+\mathrm i y_k,\qquad x_k,y_k\in\mathbb R.
\]
Therefore, it follows from \eqref{eq:s-lowerbound} that
\[
\sum_{k=1}^d q_{ik}x_k
=
e^{-\mathrm i\theta_i}\sum_{k=1}^d q_{ik}z_k
=
|s_i|
\ge 1-C\kappa_Q^{-(d-1)}|(U-\lambda Q)\mathbf z|_\infty.
\]
Since \(x_k\le |z_k|\le 1\), we get
\[
0\le \sum_{k=1}^d q_{ik}(1-x_k)\le C\kappa_Q^{-(d-1)}|(U-\lambda Q)\mathbf z|_\infty.
\]
If $q_{ij}>0$, it follows from \(q_{ij}\ge \kappa_Q\) that
$
1-x_j\le C\kappa_Q^{-d}|(U-\lambda Q)\mathbf z|_\infty.
$
Using \(x_j^2+y_j^2=|z_j|^2\le 1\), we obtain
\[
y_j^2\le 1-x_j^2\le 2(1-x_j)\le C\kappa_Q^{-d}|(U-\lambda Q)\mathbf z|_\infty.
\]
Therefore, since we always have
$
|(U-\lambda Q)\mathbf z|_\infty\le C|(U-\lambda Q)\mathbf z|_\infty^{1/2},
$
it can be deduced that
\begin{equation}\label{eq.triangle-ze}
  \begin{aligned}
    \forall(i,j)\in E_Q,\qquad
|z_j-e^{\mathrm i\theta_i}|
&=
|e^{-\mathrm i\theta_i}z_j-1|
\le |x_j-1|+|y_j| \\
&\le C\kappa_Q^{-d}|(U-\lambda Q)\mathbf z|_\infty
+C^{1/2}\kappa_Q^{-d/2}|(U-\lambda Q)\mathbf z|_\infty^{1/2}\\
&\le C\kappa_Q^{-d}|(U-\lambda Q)\mathbf z|_\infty^{1/2}.
\end{aligned}
\end{equation}
On the other hand, using the triangle inequality and $|s_i|=e^{-i\theta_i}s_i$ yields
\begin{equation}\label{eq.triangle-uz}
  |u_i z_i-e^{\mathrm i\theta_i}|
\le |u_i z_i-\lambda s_i|+|\lambda s_i-e^{\mathrm i\theta_i}|= |u_i z_i-\lambda s_i|+\bigl|\lambda|s_i|-1\bigr|.
\end{equation}
Since we have
$
\big||z_i|-\lambda|s_i|\big|\le |u_iz_i-\lambda s_i|\le \big|(U-\lambda Q)\mathbf{z}\big|_\infty,
$
it follows from the triangle inequality and \eqref{eq:modulus-close-1} that
\begin{equation*}
  \big|\lambda|s_i|-1\big|\le\big|\lambda|s_i|-|z_i|\big|+\big||z_i|-1\big|\le C\kappa_Q^{-(d-1)}\big|(U-\lambda Q)\mathbf{z}\big|_\infty.
\end{equation*}
Putting the above inequality into \eqref{eq.triangle-uz} yields
\begin{equation}\label{eq.triangle-uz-final}
  |u_i z_i-e^{\mathrm i\theta_i}|\le C\kappa_Q^{-(d-1)}|(U-\lambda Q)\mathbf z|_\infty\le C\kappa_Q^{-(d-1)}|(U-\lambda Q)\mathbf z|_\infty^{1/2}.
\end{equation}
Combining \eqref{eq.triangle-ze} and \eqref{eq.triangle-uz-final}, we can obtain
\begin{equation}\label{eq:edge-phase-alignment}
    |z_j-u_i z_i|\le|z_j-e^{\mathrm i\theta_i}|+|e^{\mathrm i\theta_i}-u_iz_i| \le C\kappa_Q^{-d}|(U-\lambda Q)\mathbf z|_\infty^{1/2},\qquad(i,j)\in E_Q.
\end{equation}

Define
$
\gamma_i:=z_i/|z_i|\in\mathbb S^1
$ for $1\le i\le d$. This definition is well posed, since \eqref{eq:modulus-close-1} implies that
$
    |z_i|\ge 1-C\kappa_Q^{d+1}\epsilon_0
    \ge \frac12,
$
provided that \(\epsilon_0>0\) is chosen sufficiently small.
Then, it follows from \eqref{eq:modulus-close-1} that
\begin{equation}\label{eq:z-close-gamma}
  |\gamma_i-z_i|=\bigl|1-|z_i|\bigr|=h_i\le C\kappa_Q^{-(d-1)}|(U-\lambda Q)\mathbf z|_\infty,\qquad i=1,\cdots,d.
\end{equation}
Hence  \eqref{eq:edge-phase-alignment} and \eqref{eq:z-close-gamma} imply
\[
\forall (i,j)\in E_Q,\qquad
|u_i\gamma_i-\gamma_j|
\le |u_i\gamma_i-u_i z_i|+|u_i z_i-z_j|+|z_j-\gamma_j|
\le C\kappa_Q^{-d}|(U-\lambda Q)\mathbf z|_\infty^{1/2},
\]
Using \(|\gamma_i|=1\) and taking the supremum over \((i,j)\in E_Q\), we obtain
\begin{equation}\label{eq:edge-defect-bound}
\max_{(i,j)\in E_Q}|u_i-\bar{\gamma}_i\gamma_j|=\max_{(i,j)\in E_Q}|u_i\gamma_i-\gamma_j|
\le C\kappa_Q^{-d}|(U-\lambda Q)\mathbf z|_\infty^{1/2}.
\end{equation}

\noindent\textit{Step 3: Relation to the resonance set $\mathscr{R}_Q$}. 
In this step, we use \eqref{eq:edge-defect-bound} to show \eqref{eq.bulk-stability-lemma}.

Since $Q\mathbf 1=\mathbf 1$, every row of $Q$ contains at least one positive entry. As $Q$ is also irreducible, its support digraph is strongly connected, and hence it contains at least one directed cycle. Removing repetitions from any directed cycle, we obtain a simple directed cycle. 
Since the vertex set is finite, there are only finitely many simple directed cycles.

Now fix any simple directed cycle 
\[
\mathscr{C}: i_0\to i_1\to \cdots \to i_{\ell-1}\to i_\ell=i_0.
\]
Define the auxiliary variables
$
\xi_j:=u_{i_j}\gamma_{i_j}\bar\gamma_{i_{j+1}}
$ for $0\le j\le \ell-1$, where $i_j$ is the vertex of the above simple cycle $\mathscr{C}$.
Since $\mathscr{C}$ is a cycle, we have 
$
  \prod_{j=0}^{\ell-1}\xi_j=\prod_{j=0}^{\ell-1}u_{i_j}.
$
It follows from \eqref{eq:edge-defect-bound} and $|\gamma_i|=1$ that 
\begin{equation}
  |\xi_j-1|=|u_{i_j}-\bar{\gamma}_{i_j}\gamma_{i_{j+1}}|\le C\kappa_Q^{-d}|(U-\lambda Q)\mathbf z|_\infty^{1/2},\qquad j=0,\dots,\ell-1.
\end{equation}
Then, using repeatedly that
\[
|zw-1|\le |z-1|+|w-1|,
\qquad \mbox{when}~|z|=|w|=1,
\]
we obtain, for every simple directed cycle $\mathscr{C}$,
\begin{equation}\label{eq.u-cycle-bound}
  \left|\prod_{i\in \mathscr{C}}u_i-1\right|
=
\left|\prod_{k=0}^{\ell-1}\xi_k-1\right|
\le \sum_{k=0}^{\ell-1}|\xi_k-1|
\le C\kappa_Q^{-d}|(U-\lambda Q)\mathbf z|_\infty^{1/2}.
\end{equation}

Next, since $U$ is a unitary matrix, we can write
$
u_i=e^{\mathrm i\hat\theta_i}
$ 
with
$\hat\theta:=(\hat\theta_1,\dots,\hat\theta_d)\in\mathbb R^d.
$
For each simple directed cycle $\mathscr{C}$, let $\mathbf{e}(\mathscr{C})\in\mathbb Z^d$ be its vertex-counting vector, namely
\[
[\mathbf{e}(\mathscr{C})]_k=
\begin{cases}
1,& k\in \mathscr{C},\\
0,& k\notin \mathscr{C}.
\end{cases}
\]
Then, it follows that
\begin{equation}\label{eq.u-cycle-transform}
  \Big|\prod_{i\in \mathscr{C}}u_i-1\Big|=\big| e^{\mathrm i\langle \mathbf{e}(\mathscr{C}),\hat\theta\rangle}-1\big|.
\end{equation}

Denote by 
\[
\mathscr{L}:=\spn_{\mathbb Z}\{\mathbf{e}(\mathscr{C}): \mathscr{C}\text{ is a simple directed cycle}\}\subset\mathbb Z^d
\]
the subgroup of $\mathbb{Z}^d$ generated by $\mathbf{e}(\mathscr{C})$ over all simple directed cycles $\mathscr{C}$, and choose a $\mathbb Z$-basis
$
\mathbf{a}_1,\dots,\mathbf{a}_r\in\mathbb Z^d.
$
Since each $\mathbf{a}_k$ is an integer linear combination of the vectors $\mathbf{e}(\mathscr{C})$, we obtain
\[
\mathbf{a}_k=\sum_{\mathscr{C}} C_{\mathscr{C}}^{(k)}\mathbf{e}(\mathscr{C}),
\qquad C_{\mathscr{C}}^{(k)}\in\mathbb Z,
\]
which implies
\[
e^{i\langle \mathbf{a}_k,\hat\theta\rangle}
=
\prod_{\mathscr{C}}\Bigl(e^{i\langle \mathbf{e}(\mathscr{C}),\hat\theta\rangle}\Bigr)^{C_{\mathscr{C}}^{(k)}}.
\]
Because only finitely many simple cycles appear, and all coefficients depend only on the support of $Q$, \eqref{eq.u-cycle-transform} and \eqref{eq.u-cycle-bound} imply
$
|e^{\mathrm i\langle \mathbf{a}_k,\hat\theta\rangle}-1|\le C\kappa_Q^{-d}|(U-\lambda Q)\mathbf z|_\infty^{1/2}
$ for all $1\le k\le r$.
By the inequality
$
\dist(t,2\pi\mathbb Z)\le C\,|e^{\mathrm it}-1|
$
for all $t\in\R$,
we get
\[
\dist(\langle \mathbf{a}_k,\hat\theta\rangle,2\pi\mathbb Z)\le C\kappa_Q^{-d}|(U-\lambda Q)\mathbf z|_\infty^{1/2},\qquad k=1,\dots,r.
\]
Let $A\in\mathbb Z^{r\times d}$ be the matrix whose rows are $\mathbf{a}_1,\dots,\mathbf{a}_r$.
Then one can choose $\mathbf{n}=(n_1,\dots,n_r)\in\mathbb Z^r$ such that
$
\mathbf{b}:=A\hat\theta-2\pi \mathbf{n}
$
satisfies
\begin{equation}\label{eq.b-bound}
  |\mathbf{b}|_\infty\le C\kappa_Q^{-d}|(U-\lambda Q)\mathbf z|_\infty^{1/2}.
\end{equation}

Since the rows of $A$ are linearly independent, $A$ has full row rank, hence admits a right inverse
$
A^\dagger\in\mathbb R^{d\times r}
$
such that
$
AA^\dagger=I_r.
$
Set
$
\tilde{\theta}:=\hat\theta-A^\dagger\mathbf{b}.
$
Then, we have
\[
A\tilde{\theta}=A\hat\theta-AA^\dagger\mathbf{b}=A\hat\theta-\mathbf{b}=2\pi \mathbf{n}\in 2\pi\mathbb Z^r,
\]
which implies that $e^{\mathrm i\langle \mathbf{a}_k,\tilde{\theta}\rangle}=1$ for every basis $\mathbf{a}_k$. Therefore,
by the characterization of $\mathscr{R}_Q$ through the cycle constraints, it follows that
\[
V:=\diag(v_1,\dots,v_d)\in\mathscr{R}_Q,\qquad\text{where}~v_j:=e^{\mathrm i\tilde{\theta}_j}.
\]
Moreover, using that $\|A^\dagger\|_\infty$ only depends on the support of $Q$, it follows from \eqref{eq.b-bound} that
\[
|\tilde{\theta}-\hat\theta|_\infty\le \|A^\dagger\|_\infty |\mathbf{b}|_\infty\le C\kappa_Q^{-d}|(U-\lambda Q)\mathbf z|_\infty^{1/2},
\]
and thus
\[
\max_{1\le j\le d}|u_j- v_j|
=
|e^{\mathrm i\hat\theta}-e^{\mathrm i\tilde{\theta}}|_\infty
\le |\hat\theta-\tilde{\theta}|_\infty
\le  C\kappa_Q^{-d}|(U-\lambda Q)\mathbf z|_\infty^{1/2}.
\]
Then, we conclude that
\[
\dist(U,\mathscr{R}_Q)\le \max_j |u_j-v_j|
\le C\kappa_Q^{-d}|(U-\lambda Q)\mathbf z|_\infty^{1/2},
\]
which proves the desired result.
\end{proof}

\bigskip
Using Lemma \ref{lem:UminusQ-graph-resonance}, the following proposition gives the bulk stability in the symmetric case.

\begin{proposition}[Bulk stability for symmetric matrices]\label{prop.bulk-stability-symmetric}
Suppose \(S\in\mathbb R^{d\times d}\) is nonnegative, symmetric, irreducible and the solution $m(z)$ is uniformly bounded on $\Cp(I)$ where $I\subset[-\Sigma,\Sigma]$ is a closed interval.
Then, for all $z\in\Cp(I)$ with \(|z|\le 2\Sigma\), we have
\begin{equation}\label{eq.symmetric-general-bulk-stability}
    \bigl\|(I-m(z)^2S)^{-1}\bigr\|_2
    \lesssim
    \bigl(
        [\max_{1\le i\le d}|z+a_i|]\,
        \langle\im m(z)\rangle
    \bigr)^{-2}.
\end{equation}
Moreover, if the support graph of \(S\) is non-bipartite, then
\begin{equation}\label{eq.symmetric-nonbipartite-bulk-stability}
    \bigl\|(I-m(z)^2S)^{-1}\bigr\|_2
    \lesssim
    \langle\im m(z)\rangle^{-2}.
\end{equation}
\end{proposition}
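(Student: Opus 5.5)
The plan is to reduce to the phase-modulus form. By \eqref{eq.I-m2S-transform} and $|m_i|\sim1$ (Theorem~\ref{thm.bound}), it suffices to bound $\|R^{-1}\|_2$ with $R=U-F$. Next, normalize $F$ as in Proposition~\ref{pro.I-F-second}: put $Q_F=\lambda_F^{-1}D(r)^{-1}FD(r)$. Then
\[
R=D(r)\bigl(U-\lambda_F Q_F\bigr)D(r)^{-1},
\]
since $U$ is diagonal, and $D(r)$ is uniformly well conditioned by \eqref{eq.vector-comparable}. The matrix $Q_F$ is Markov with the same support as $S$ and $\kappa_{Q_F}\sim1$, and $0<\lambda_F\le1$ by \eqref{eq.F-eigenvalue}.

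Lemma~\ref{lem:UminusQ-graph-resonance} then gives, after passing between $|\cdot|_\infty$ and $|\cdot|_2$ at the cost of constants,
\[
\|R^{-1}\|_2\lesssim \min\bigl\{(1-\lambda_F)^{-1},\ \dist(U,\mathscr R_{S})^{-2}\bigr\}.
\]
So the task reduces to the lower bound
\[
\dist(U,\mathscr R_S)\gtrsim \Bigl(\max_i|z+a_i|\Bigr)\langle \im m\rangle
\]
in general, and $\dist(U,\mathscr R_S)\gtrsim\langle\im m\rangle$ in the non-bipartite case.

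For this lower bound I use the symmetry of $S$. Every edge $(i,j)\in E_S$ gives a 2-cycle $i\to j\to i$, so any $V\in\mathscr R_S$ satisfies $v_iv_j=1$ on edges. If the graph is non-bipartite, an odd cycle $i_0\to\cdots\to i_{2k+1}=i_0$ combined with the relations $v_{i_r}v_{i_{r+1}}=1$ forces $v_{i_0}^2=1$. Connectivity then propagates this to $v_i\in\{\pm1\}$ for every $i$. Now $u_i=\overline{m_i}/m_i=e^{-2\ii\arg m_i}$, and $\im m_i\sim\langle\im m\rangle$ with $|m_i|\sim1$, so $\arg m_i$ stays at distance $\gtrsim\langle\im m\rangle$ from $\{0,\pi\}$. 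Hence $|u_i\mp1|\gtrsim\langle\im m\rangle$ (using $\arg m_i\in(0,\pi)$ and restricting to a bounded region). This gives \eqref{eq.symmetric-nonbipartite-bulk-stability}.

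In the bipartite case the resonance set is $\{v_i=\omega\ \text{on}\ \mathbb V_+,\ v_i=\bar\omega\ \text{on}\ \mathbb V_-\}$ for some $\omega\in\mathbb S^1$. Suppose $U$ is $\varepsilon$-close to such a $V$. Then $u_iu_j\approx1$ on every edge, i.e.\ $m_j\approx \pm\overline{m_i}$ along edges, with a sign that is constant on the relevant set.

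The next step is to feed this into the Dyson equation. Let $\varepsilon:=\dist(U,\mathscr R_S)$ and write $m_j=\pm\overline{m_i}\,e^{\ii O(\varepsilon)}$ for $j\sim i$. Substituting into $-1/m_i=z+a_i+\sum_j s_{ij}m_j$ and taking imaginary and real parts, I expect the relation
\[
\im m_i/|m_i|^2=\eta+(S\im m)_i
\]
together with the structure $m_j\approx\pm\overline{m_i}$ to force $|z+a_i|\,\im m_i\lesssim\varepsilon$. The heuristic is the computation in the proof of Proposition~\ref{prop:psi-positive-and-nondegenerate}, where exact alternation forced $\tau+a_i=0$. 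A perturbative version of that identity should yield $|z+a_i|\lesssim\varepsilon/\langle\im m\rangle$. Consequently
\[
\varepsilon\gtrsim\Bigl(\max_i|z+a_i|\Bigr)\langle\im m\rangle,
\]
using $|z|\le2\Sigma$ to keep all quantities bounded.

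\textbf{Main obstacle.} The hardest step is this quantitative bipartite computation. The alternation must be captured by the phases alone, with the perturbation errors tracked to first order in $\varepsilon$, and it is not obvious that the errors stay linear rather than square-root. If the direct computation fails, I would instead test $(U-\lambda_FQ_F)$ against the explicit near-kernel vector built from $\gamma$ and $p$ and argue by contradiction using continuity.
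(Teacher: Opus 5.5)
Your overall route (reduction to $R=U-F$, the Markov normalization $Q_F$, Lemma~\ref{lem:UminusQ-graph-resonance}, and $v_iv_j=1$ from $2$-cycles) is the paper's route, but two steps fail as written.

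\textbf{The bipartite reduction.} You reduce the problem to showing $\dist(U,\mathscr R_S)\gtrsim(\max_i|z+a_i|)\langle\im m\rangle$, but this inequality is false. Take bipartite support with $\mathbf a=0$ (e.g.\ $d=2$, $s_{12}=s_{21}=1$) and $z=\ii\eta$. The uniqueness argument behind \eqref{eq.invariant-transformation} makes $m(\ii\eta)$ purely imaginary, so $U=-I\in\mathscr R_S$ (take $\omega=-1$) and $\dist(U,\mathscr R_S)=0$. Meanwhile $\max_i|z+a_i|\,\langle\im m\rangle\sim\eta$. For the same reason, your expected bound $|z+a_i|\im m_i\lesssim\varepsilon$ cannot hold.

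The perturbative computation you left at ``I expect'' actually gives something weaker. Work in the regime $\varepsilon\le c\langle\im m\rangle$, which fixes the branch; the other regime is trivial. Write $\omega=e^{-2\ii\theta_*}$. Then $\arg m_i=\theta_*+O(\varepsilon)$ on $\mathbb V_+$, $\arg m_j=\pi-\theta_*+O(\varepsilon)$ on $\mathbb V_-$, and $\sin\theta_*\sim\langle\im m\rangle$. Multiply the Dyson equation at $i\in\mathbb V_+$ by $e^{\ii\theta_*}$ and take imaginary parts. This leaves $(\tau+a_i)\sin\theta_*+\eta\cos\theta_*=O(\varepsilon)$. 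The term $\eta\cos\theta_*$ cannot be absorbed, so you only get $\varepsilon+\eta\gtrsim\max_i|\tau+a_i|\,\langle\im m\rangle$.

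The $\eta$-part must be paid for by the a priori bound $s_{\min}(I-m^2S)\gtrsim\eta$ of Proposition~\ref{pro.simple-bound-stability}. Equivalently, keep the $(1-\lambda_F)$ term you wrote down and then dropped: by \eqref{eq.F-eigenvalue}, $1-\lambda_F\sim\eta/\langle\im m\rangle\gtrsim\eta$. Then $s_{\min}\gtrsim\max\{\varepsilon^2,\eta\}\gtrsim(\max_i|z+a_i|\,\langle\im m\rangle)^2$, since $\eta$ and $|\tau+a_i|$ are bounded.

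\textbf{The non-bipartite case.} You use only the edge relations, so you only obtain $v_i\in\{\pm1\}$, i.e.\ $V=\pm I$. You then assert $|u_i+1|\gtrsim\langle\im m\rangle$, which is false. Indeed $|u_i+1|=2|\cos\arg m_i|$, and this vanishes when $m_i$ is purely imaginary. For example, with $\mathbf a=0$ and $z=\ii\eta$, $u_i=-1$ exactly while $\im m_i=|m_i|\sim1$. The distance of $\arg m_i$ from $\{0,\pi\}$ controls only $|u_i-1|$.

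The missing point is to impose the resonance condition on the odd cycle itself, not just on its edges. Around a cycle of length $2k+1$ the values alternate $v,\bar v,\dots,v$, so the product is $v^{k+1}\bar v^{k}=v$. This forces $v=1$ and hence $\mathscr R_S=\{I\}$. Then $\dist(U,\mathscr R_S)=\max_i|u_i-1|=2\max_i\im m_i/|m_i|\gtrsim\langle\im m\rangle$, which is how the paper concludes.
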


\begin{proof}
We first list two bounds that will be used frequently in the following proof. It follows from \eqref{eq.m-bound} that, for all $z\in\Cp(I)$ with $|z|\le 2\Sigma$,
    \begin{equation}\label{eq.m-bulk-bound-symmetric}
      |m_{i}(z)|\sim 1\qquad i=1,\dots,d.
    \end{equation}
    Also, it follows from \eqref{eq.m-Im} that, for all $z\in \Cp(I)$ with $|z|\le 2\Sigma$,
    \begin{equation}\label{eq.im-m-comparability-symmetric}
      \im m_{i}(z)\sim\langle \im m(z)\rangle\qquad i=1,\dots,d.
    \end{equation}

    Then, it follows from \eqref{eq.I-m2S-transform}, \eqref{eq.m-bound} and direct calculation that
    \[
      \|(I-m^2S)^{-1}\|_2\le\frac{\max_i|m_i|}{\min_i|m_i|}\|R^{-1}\|_2\sim\|R^{-1}\|_2.
    \]
    Since $|\cdot|_2\sim|\cdot|_\infty$, it is sufficient to show that
    $
    |R\mathbf z|_\infty
    \gtrsim
    \bigl([\max_i |z+a_i|]\langle\im m\rangle\bigr)^2
    |\mathbf z|_\infty$
    in general, and
    $
    |R\mathbf z|_\infty
    \gtrsim
    \langle\im m\rangle^2|\mathbf z|_\infty
    $
    when the support graph of \(S\) is non-bipartite.

    We first normalize $F$ again. Define $ Q_F:= \lambda_F^{-1}D(r)^{-1}FD(r)$.
    Lemma \ref{lem:F-PF} implies that $r>0$. Combined with $|m|>0$, it follows that $Q_F\ge0$, $Q_F\mathbf{1}=\mathbf{1}$ and $Q_F$ has the same support as $S$.
    Since \(U\) and \(D(r)\) are diagonal, they commute, and therefore
    \begin{equation}\label{eq:B-conjugation}
    R
    =
    U-F
    =
    D(r)\,(U-\lambda_F Q_F)\,D(r)^{-1}.
    \end{equation}
    For any \(\mathbf z\in\C^d\), set
    $
    \mathbf w:=D(r)^{-1}\mathbf z.
    $
    Then by \eqref{eq:B-conjugation},
    \begin{equation}\label{eq:B-vs-UQ-1}
      |R\mathbf z|_\infty=|D(r)(U-\lambda_FQ_F)\mathbf w|_\infty\ge \min_i r_i\, |(U-\lambda_FQ_F)\mathbf w|_\infty.
    \end{equation}
    On the other hand, we have
    $
    |\mathbf w|_\infty=|D(r)^{-1}\mathbf z|_\infty\ge
    (\max_i r_i)^{-1}|\mathbf z|_\infty.
    $
    Since \eqref{eq.vector-comparable} implies that $\min_i r_i\sim\max_i r_i$ it remains to show that
    $
    |(U-\lambda_FQ_F)\mathbf w|_\infty\gtrsim \langle\im m\rangle^{2}|\mathbf w|_\infty .
    $
    Lemma~\ref{lem:UminusQ-graph-resonance} implies 
    \begin{equation}\label{eq.s1-U-F-symmetry}
        |(U-\lambda_FQ_F)\mathbf w|_\infty
    \gtrsim\dist(U,\mathscr{R}_{Q_F})^2\,|\mathbf w|_\infty .
    \end{equation}
    Since $Q_F$ has the same support as $S$, we have
    $
    \mathscr{R}_{Q_F} = \mathscr{R}_S.
    $
    Hence, it is enough to analyze $\dist(U,\mathscr{R}_{S})$.

        Since \(S\) is symmetric, whenever \(i\to j\), we also have \(j\to i\).
        Thus the two directed edges form a directed \(2\)-cycle. By the definition of \(\mathscr R_S\), this gives, for every $V\in\mathscr{R}_S$, 
        \begin{equation}\label{eq.symmetry-vivj}
            \qquad v_iv_j=1,\qquad i\sim j.
        \end{equation}
        The irreducibility of $S$ implies the connectivity of the support undirected graph. Hence the values
        \(v_i\) alternate between two reciprocal values along paths.

    We first prove the general estimate \eqref{eq.symmetric-general-bulk-stability}.
    If the support graph of \(S\) is non-bipartite, then the stronger estimate
    \eqref{eq.symmetric-nonbipartite-bulk-stability}, proved below, immediately
    implies \eqref{eq.symmetric-general-bulk-stability}, since \(\mathbf a\) is fixed and
    \(|z|\le2\Sigma\). Hence, for the proof of
    \eqref{eq.symmetric-general-bulk-stability}, it remains to consider the case
    where the support graph of \(S\) is bipartite.

    Let
    $
        \{1,\dots,d\}=\mathbb V_+\sqcup\mathbb V_-
    $
    be the bipartition of the support graph. 
    Since the values \(v_i\) alternate between two reciprocal values along paths, it follows that there exists
    \(\gamma\in\mathbb S^1\) such that
    \begin{equation}\label{eq.bipartite-resonance-structure-symmetric}
        v_i=\gamma,\qquad i\in\mathbb V_+,
        \qquad
        v_j=\gamma^{-1},\qquad j\in\mathbb V_- .
    \end{equation}

    We assume that $\dist(U,\mathscr{R}_{S})\le c\langle\im m\rangle$, 
    where \(c>0\) is a sufficiently small constant depending only on model parameters. Indeed, if
    \(\dist(U,\mathscr{R}_{S})>c\langle\im m\rangle\), then it follows that
    \[
        s_{\rm min}(I-m^2S)
        \gtrsim
        \dist(U,\mathscr{R}_{S})^2
        \gtrsim
        \langle\im m\rangle^2
        \gtrsim
        \bigl(
            [\max_i|z+a_i|]\langle\im m\rangle
        \bigr)^2,
    \]
    where we apply $|z|\le 2\Sigma$ in the last inequality.
    
    Choose \(V_*\in\mathscr R_{S}\) such that
    $
       \max_j |u_j-v^*_j|\le 2\dist(U,\mathscr{R}_{S}) .
    $
    It follows from \eqref{eq.bipartite-resonance-structure-symmetric} that there are two values for $v^*_i$: $\gamma$ and $\gamma^{-1}$. For \(\gamma\in\mathbb S^1\), we can choose \(\theta_*\in[0,\pi)\) such that
    $
        \gamma=e^{-2\mathrm i\theta_*}.
    $
    The diagonal unitarity of \(U\) also allows us to write its entries in the form
    \[
        u_j=e^{-2\mathrm i\theta_j},
        \qquad 
        \theta_j:=\arg m_j\in(0,\pi).
    \]
    
    For \(i\in\mathbb V_+\), we assume the corresponding diagonal entry of \(V_*\) is
    \(v^*_i=\gamma=e^{-2\mathrm i\theta_*}\). Then using $|e^{-2\mathrm ia}-e^{-2\mathrm ib}|=2|\sin(a-b)|$ implies
    \[
        2|\sin(\theta_i-\theta_*)|=|e^{-2\mathrm i\theta_i}-e^{-2\mathrm i\theta_*}|
        =
        |u_i-v^*_i|
        \le \max_j |u_j-v^*_j|
        \le 2\dist(U,\mathscr R_S).
    \]
    Thus, we obtain
    $
        |\sin(\theta_i-\theta_*)|
        \lesssim \dist(U,\mathscr R_S).
    $
    Since we are working in the regime
    \(\dist(U,\mathscr R_S)\le c\langle\im m\rangle\) with \(c>0\) sufficiently small,
    the above estimate fixes the relevant branch of the phase modulo \(\pi\). Therefore,
    after choosing the representative $\gamma,\gamma^{-1}$ consistently, we have
    \begin{equation}\label{eq.phase-close-plus-symmetric}
        |\theta_i-\theta_*|
        \lesssim \dist(U,\mathscr{R}_{S}),
        \qquad i\in\mathbb V_+ .
    \end{equation}
    Similarly, we get
    \begin{equation}\label{eq.phase-close-minus-symmetric}
        |\theta_j-(\pi-\theta_*)|\lesssim \dist(U,\mathscr{R}_{S}),
        \qquad j\in\mathbb V_- .
    \end{equation}
    Since $\dist(U,\mathscr{R}_S)$ is small enough, it follows that
    \begin{equation}\label{eq.sin-theta-star-symmetric}
      \sin\theta_*
        =
        \sin\theta_i+O(\dist(U,\mathscr{R}_{S}))
        =
        \frac{\im m_i}{|m_i|}+O(\dist(U,\mathscr{R}_{S}))
        \sim
        \langle\im m\rangle,
    \end{equation}
    where we use $|m_i|\sim 1$ and $\im m_i\sim\langle\im m\rangle$ in the last comparison.

    We now use the Dyson equation. Fix \(i\in\mathbb V_+\). Since all neighbors of \(i\)
    belong to \(\mathbb V_-\), we have
    \[
        -\frac1{m_i}
        =
        z+a_i+\sum_{j\in\mathbb V_-}s_{ij}m_j .
    \]
    Multiplying by \(e^{\mathrm i\theta_*}\) and taking imaginary parts gives
    \begin{equation}\label{eq.plus-imaginary-part}
        \im\bigl(e^{\mathrm i\theta_*}(z+a_i)\bigr)
        =
        \im\Big(-\frac{e^{\mathrm i\theta_*}}{m_i}\Big)
        -
        \im\Bigg(
            \sum_{j\in\mathbb V_-}s_{ij}e^{\mathrm i\theta_*}m_j
        \Bigg).
    \end{equation}
    It follows from \eqref{eq.phase-close-plus-symmetric} and $|m_i|\sim 1$ that
    \smallskip
    $
        \left|
        \im\big(-e^{\mathrm i\theta_*}/m_i\big)
        \right|
        =
        |m_i|^{-1}|\sin(\theta_*-\theta_i)|
        \lesssim \dist(U,\mathscr{R}_{S}) .
    $
    Using \eqref{eq.phase-close-minus-symmetric} and $|m_j|\sim 1$ also yields
    $
        \left|
        \im(e^{\mathrm i\theta_*}m_j)
        \right|
        =
        |m_j|\,|\sin(\theta_*+\theta_j)|
        \lesssim \dist(U,\mathscr{R}_{S})
    $
    for all $j\in\mathbb{V}_-$.
    Since \(S\) is nonnegative and \(|m_j|\sim1\), the last term in
    \eqref{eq.plus-imaginary-part} is also \(O(\dist(U,\mathscr{R}_{S}))\). Hence we have
    \begin{equation}\label{eq.plus-z-ai-control}
        \left|
        \im\bigl(e^{\mathrm i\theta_*}(z+a_i)\bigr)
        \right|
        \lesssim \dist(U,\mathscr{R}_{S})
        \qquad i\in\mathbb V_+ .
    \end{equation}
    Similarly, for \(j\in\mathbb V_-\), multiplying the Dyson equation by \(e^{-\mathrm i\theta_*}\),
    and using that all neighbors of \(j\) belong to \(\mathbb V_+\), we obtain
    \begin{equation}\label{eq.minus-z-aj-control}
        \left|
        \im\bigl(e^{-\mathrm i\theta_*}(z+a_j)\bigr)
        \right|
        \lesssim \dist(U,\mathscr{R}_{S}),
        \qquad j\in\mathbb V_- .
    \end{equation}
    
    Since \(a_i\in\mathbb R\) and \(z=\tau+\mathrm i\eta\), we have
    \[
        \im\bigl(e^{\mathrm i\theta_*}(z+a_i)\bigr)
        =
        (\tau+a_i)\sin\theta_*+\eta\cos\theta_*,
        \qquad i\in\mathbb V_+,
    \]
    and
    \[
        \im\bigl(e^{-\mathrm i\theta_*}(z+a_j)\bigr)
        =
        -(\tau+a_j)\sin\theta_*+\eta\cos\theta_*,
        \qquad j\in\mathbb V_- .
    \]
    Therefore \eqref{eq.plus-z-ai-control} and \eqref{eq.minus-z-aj-control} imply
    $
        |\tau+a_i|\sin\theta_*
        \lesssim
        \dist(U,\mathscr{R}_{S})+\eta
    $
    for all $i=1,\dots,d$.
    Then the key estimate follows from \eqref{eq.sin-theta-star-symmetric}
    \begin{equation}\label{eq.delta-eta-controls-max-tau-ai}
        \dist(U,\mathscr{R}_{S})+\eta
        \gtrsim
        \bigl[\max_{1\le i\le d}|\tau+a_i|\bigr]\,
        \langle\im m\rangle .
    \end{equation}

    Combining Proposition \ref{pro.simple-bound-stability} with
    \eqref{eq.delta-eta-controls-max-tau-ai}, and using the boundedness of
    \(|z|\), \(a\), and \(\langle\im m\rangle\), we get
    \[
        s_{\rm min}(I-m^2S)
        \gtrsim
        \bigl(
            [\max_i|\tau+a_i|]\langle\im m\rangle
        \bigr)^2 .
    \]
    Moreover, using
    $
        s_{\rm min}(I-m^2S)
        \gtrsim
        \eta
        \gtrsim
        (\eta\langle\im m\rangle)^2 
    $
    and 
    $
        \max_i |z+a_i|
        \le
        \max_i|\tau+a_i|+\eta,
    $
    we conclude that
    $
        s_{\rm min}(I-m^2S)
        \gtrsim
        \bigl(
            [\max_i|z+a_i|]\langle\im m\rangle
        \bigr)^2 .
    $
    This proves \eqref{eq.symmetric-general-bulk-stability}.
    
    It remains to prove the stronger estimate \eqref{eq.symmetric-nonbipartite-bulk-stability}.
    If the support graph of \(S\) is non-bipartite, then it contains an undirected odd cycle. Since \(S\) is symmetric, this cycle can be oriented as a directed cycle:
        \[
            i_1\to i_2\to\dots\to i_{2\ell+1}\to i_1.
        \]
        Along this cycle, the relation \(v_{i_j} v_{i_{j+1}}=1\) implies that the product $\prod_{j=1}^{2\ell+1}v_{i_j}$
        is equal to one of the values \(v_{i_r}\). On the other hand, the resonance condition applied
        to this odd directed cycle gives
        $
            \prod_{j=1}^{2\ell+1}v_{i_j}=1.
        $
        Thus \(v_{i_r}=1\) for some vertex on the odd cycle. Using again \(v_i v_j=1\) and the
        connectivity of the underlying graph, we obtain
        $
            v_i=1
        $
        for $i=1,\dots,d$,
        which implies $\mathscr R_{S}=\{I\}.$
        Therefore it follows that
        $
        \dist(U,\mathscr{R}_{S})
        =
        \max_j |u_j-1|.
      $
    Since \(u_i=e^{-2\mathrm i\theta_i}\), we have
    \[
        |u_i-1|
        =
        |e^{-2\mathrm i\theta_i}-1|
        =
        2\sin\theta_i
        =
        2\frac{\im m_i}{|m_i|}.
    \]
    Using $|m_i|\sim 1$ and $\im m_i\sim\langle \im m\rangle$, we obtain
    $
        \dist(U,\mathscr{R}_{S})
        \gtrsim
        \langle\im m\rangle .
    $
    Putting this into \eqref{eq.s1-U-F-symmetry} gives
    $
        s_{\rm min}(I-m^2S)
        \gtrsim
        \langle\im m\rangle^2
    $, which concludes the proof.
     \end{proof}

    In the remaining part of this section, we consider the non-backtracking matrix case.
    We first introduce the lemma that gives a concrete description of the resonance set of the non-backtracking matrix when the base graph has minimum degree at least three. It shows that every resonance phase on the directed edges is generated by a vertex potential on the base graph.

    \begin{lemma}\label{lem.resonant-set-B}
    Let $\mathcal G=(\mathbb{V}_{\mathcal G},E_{\mathcal G})$ be an undirected connected base graph and assume that
    \[
    \deg_{\mathcal G}(x)\ge 3,\qquad x\in\mathbb V_{\mathcal G}.
    \]
    Let $B$ be the non-backtracking matrix of the base graph $\mathcal G$. Then
    \[
    \mathscr R_B
    =
    \left\{
    \operatorname{diag}\bigl(v_e\bigr)_{e\in\vec E_{\mathcal G}}
    :
    \exists(\omega_x)_{x\in \mathbb{V}_{\mathcal G}}\in (\mathbb S^1)^{\mathbb{V}_{\mathcal G}}
    \text{ such that }
    v_{xy}=\overline{\omega_x}\omega_y
    \right\}.
    \]
    \end{lemma}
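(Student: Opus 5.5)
The two inclusions are proved separately, using Lemma~\ref{lem.zero-set-of-edge-defect} to pass between the cycle description of $\mathscr R_B$ and the edge-defect description. Write $E_B$ for the support of $B$: its edges are the non-backtracking transitions $(x,y)\to(y,w)$ with $w\sim y$, $w\ne x$. We first record that $B$ is irreducible when $\mathcal G$ is connected with minimal degree at least three. This is a standard fact about non-backtracking walks and is needed for Lemma~\ref{lem.zero-set-of-edge-defect} to apply. If one wants to avoid it, the direction ``$\supseteq$'' can be checked directly on cycles.

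\emph{Inclusion $\supseteq$.} Suppose $v_{xy}=\overline{\omega_x}\omega_y$. Take any directed cycle of $E_B$, which is a closed non-backtracking walk $e_0\to e_1\to\cdots\to e_\ell=e_0$ with $e_k=(x_k,x_{k+1})$. Then
\[
\prod_k v_{e_k}=\prod_k\overline{\omega_{x_k}}\omega_{x_{k+1}}=1,
\]
because the product telescopes along the closed vertex walk. Hence $V\in\mathscr R_B$.

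\emph{Inclusion $\subseteq$.} Let $V\in\mathscr R_B$. By Lemma~\ref{lem.zero-set-of-edge-defect} there exist $\gamma\in(\mathbb S^1)^{\vec E_{\mathcal G}}$ such that
\[
v_{xy}=\overline{\gamma_{xy}}\gamma_{yw}\quad\text{for every transition }(x,y)\to(y,w),\ w\ne x .
\]
The key step is to show that $\gamma_{yw}$ does not depend on $w$; this is the same mechanism as the identification of $\mathcal K_2$ in Proposition~\ref{prop:psi-positive-and-nondegenerate}.

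Fix a vertex $y$ and two distinct neighbours $w_1,w_2$ of $y$. Since $\deg y\ge3$, there is a neighbour $x\ne w_1,w_2$. Both $(x,y)\to(y,w_1)$ and $(x,y)\to(y,w_2)$ are admissible transitions, so
\[
\overline{\gamma_{xy}}\gamma_{yw_1}=v_{xy}=\overline{\gamma_{xy}}\gamma_{yw_2},
\]
which gives $\gamma_{yw_1}=\gamma_{yw_2}$. Hence $\gamma_{yw}=:\alpha_y$ depends only on the tail $y$.

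It remains to express $v_{xy}$ through this vertex function. For any edge $(x,y)$, choose some $w\sim y$ with $w\ne x$, which exists since $\deg y\ge3$. Then
\[
v_{xy}=\overline{\gamma_{xy}}\gamma_{yw}=\overline{\alpha_x}\alpha_y .
\]
Setting $\omega_x:=\alpha_x$ yields the claimed form.

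The only delicate point is the degree hypothesis. Degree three is exactly what guarantees a third neighbour $x$ distinct from $w_1,w_2$, and irreducibility is what makes Lemma~\ref{lem.zero-set-of-edge-defect} available. I expect the irreducibility of $B$ to be the main obstacle if it must be proved rather than cited. A direct argument goes as follows. Minimal degree three lets a non-backtracking walk turn around along a cycle of $\mathcal G$, so one can get from $(x,y)$ to its reversal $(y,x)$. Combined with connectivity, this lets one reach any directed edge from any other.
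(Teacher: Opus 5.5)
Your proof is correct and follows the paper's argument essentially step for step: you use irreducibility of $B$ to invoke Lemma~\ref{lem.zero-set-of-edge-defect}, the third-neighbour trick (from $\deg y\ge 3$) to show that $\gamma_{yw}$ depends only on the tail $y$, and telescoping along closed non-backtracking walks for the reverse inclusion. Like the paper, you treat irreducibility of $B$ as a standard fact; your sketch of it is informal, and the paper simply asserts it.
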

    
    \begin{proof}
    Under the assumptions that \(\mathcal G\) is connected and
    \(\deg_{\mathcal G}(x)\ge 3\) for all \(x\), the non-backtracking graph on
    \(\vec E_{\mathcal G}\) is strongly connected, which implies \(B\) is irreducible.
    Let
    $
    V=\diag(v_e)_{e\in\vec E_{\mathcal G}}\in\mathscr R_B.
    $
    By Lemma \ref{lem.zero-set-of-edge-defect}, there exists $\gamma=(\gamma_{e})_{e\in\vec E_{\mathcal G}}\in (\mathbb S^1)^{\vec E_{\mathcal G}}$ such that
    $
    v_{xy}=\overline{\gamma_{xy}}\,\gamma_{yz}
    $
    for every allowed non-backtracking transition $(x,y)\to(y,z)$. 
    
    Fix $y\in \mathbb{V}_{\mathcal G}$. Since $\deg_{\mathcal G}(y)\ge 3$, for any two neighbors $z_1,z_2\sim y$, we may choose $x\sim y$ with
    $x\neq z_1,z_2$. Then both non-backtracking transitions $(x,y)\to(y,z_1)$ and $(x,y)\to(y,z_2)$ are allowed, hence
    \[
    \overline{\gamma_{xy}}\gamma_{yz_1}
    =
    v_{xy}
    =
    \overline{\gamma_{xy}}\gamma_{yz_2},
    \]
    so
    $
    \gamma_{yz_1}=\gamma_{yz_2}.
    $
    Therefore all outgoing edges from $y$ have the same value, and we denote this value by $\omega_y$.
    Then $\gamma_{xy}=\omega_x$ for every oriented edge $(x,y)$, and thus
    \[
    v_{xy}
    =
    \overline{\gamma_{xy}}\gamma_{yz}
    =
    \overline{\omega_x}\,\omega_y.
    \]
    This proves that every element of $\mathscr R_B$ has the stated form.
    
    Conversely, given any $\omega=(\omega_x)_{x\in\mathbb{V}_{\mathcal G}}\in (\mathbb{S}^1)^{\mathbb{V}_{\mathcal G}}$, we have
    $
    v_{xy}=\overline{\omega_x}\omega_y
    $
    by definition.
    Then for every directed cycle 
    \[
    \mathscr{C}:(x,y)=e_0\to e_1\to\dots\to e_\ell=(x,y),
    \]
    the product of the factors \(\overline{\omega_x}\omega_y\) along the cycle telescopes, and hence
    $
    \prod_{e\in\mathscr{C}}v_{e}=1.
    $
    Hence $V=\diag(v_e)_{e\in\vec E_{\mathcal G}}\in\mathscr R_B$. This completes the proof.
    \end{proof}

    Combining Lemmas~\ref{lem:UminusQ-graph-resonance} and~\ref{lem.resonant-set-B}, we obtain the following bulk stability estimate for the non-backtracking matrix case.

    \begin{proposition}[Bulk stability for non-backtracking matrix]\label{pro.bulk-stability-NB}
        Suppose that $S$ satisfies \textup{(NB)}, and the solution $m(z)$ is uniformly bounded on $\Cp(I)$ where $I\subset[-\Sigma,\Sigma]$ is a closed interval.
        Then, for every $z\in\Cp(I)$ with $|z|\le 2\Sigma$, we have
        \begin{equation}\label{eq.general-bulk-stability}
          \|\big(I-m(z)^2S\big)^{-1}\|_2\lesssim\big(\big[\max_y|z+a_y|\big]\langle\im m(z)\rangle\big)^{-2}.
        \end{equation}
        Additionally, if the base graph $\mathcal G$ is non-bipartite, we have
        \begin{equation}\label{eq.nonbipartite-bulk-stability}
           \|\big(I-m(z)^2S\big)^{-1}\|_2\lesssim\langle\im m(z)\rangle^{-2}.
        \end{equation}
    \end{proposition}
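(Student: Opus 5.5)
We will mirror the proof of Proposition~\ref{prop.bulk-stability-symmetric}. The first step is to conjugate the stability matrix. Using \eqref{eq.I-m2S-transform} and $|m_e|\sim1$ for $|z|\le 2\Sigma$, which follows from \eqref{eq.m-bound}, we get $\|(I-m^2S)^{-1}\|_2\sim\|R^{-1}\|_2$. Next we write $R=D(r)(U-\lambda_FQ_F)D(r)^{-1}$ with $Q_F$ the Markov normalization. The entries of $Q_F$ are comparable to one, by \eqref{eq.vector-comparable} and \eqref{eq.Fij}, and $Q_F$ has the same support as $S=B$. Lemma~\ref{lem:UminusQ-graph-resonance} then reduces everything to a lower bound on $\dist(U,\mathscr R_B)$. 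By Lemma~\ref{lem.resonant-set-B}, every $V\in\mathscr R_B$ has the form $v_{xy}=\overline{\omega_x}\omega_y$ for a vertex potential $\omega$. So it suffices to show that $\dist(U,\mathscr R_B)\lesssim c\langle\im m\rangle$ forces a contradiction, or at least the inequality $\dist(U,\mathscr R_B)+\eta\gtrsim[\max_y|z+a_y|]\langle\im m\rangle$. The factor $\eta$ is handled, as before, by Proposition~\ref{pro.simple-bound-stability}, since $\eta\gtrsim(\eta\langle\im m\rangle)^2$.

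Write $u_{xy}=e^{-2\mathrm i\theta_{xy}}$ with $\theta_{xy}=\arg m_{xy}\in(0,\pi)$, and $\omega_x=e^{\mathrm i\phi_x}$. Closeness to $V$ then says $\theta_{xy}\equiv(\phi_x-\phi_y)/2$ modulo $\pi/2$-branch ambiguities, up to $O(\dist)$. The key structural step is to use the reversed edges and the cycle structure. The pair $(x,y),(y,x)$ gives $u_{xy}u_{yx}\approx1$, so $\theta_{xy}+\theta_{yx}\in\{\pi\}+O(\dist)$; the value $0$ is excluded because both angles lie in $(0,\pi)$ and are bounded away from $0$ in no particular way, so this needs care. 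We then need to show that $\theta$ is close to one of two configurations. In the \emph{non-bipartite resonance}, $\theta_{xy}\approx0$ or $\pi$ for all edges, meaning $U\approx I$. In the \emph{alternating} configuration, all edges out of $\mathbb V_+$ have angle $\approx\theta_*$ and all edges out of $\mathbb V_-$ have angle $\approx\pi-\theta_*$. To get this we insert the phase information into the Dyson equation \eqref{eq.qve-NB}. Multiply the equation for $m_{uv}$ by $e^{\mathrm i\theta_{uv}}$ and take imaginary parts; the argument of $m_{uv}$ is thus linked to $\sum_{w\ne u}m_{vw}$. Because $\deg\ge3$, comparing the equations for different incoming $u$ at the same $v$ shows that $\theta_{uv}$ depends on $u$ only through $\phi_u$. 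Combining this with $\im m_e\sim\langle\im m\rangle$ from \eqref{eq.m-Im} gives $\sin\theta_e\sim\langle\im m\rangle$.

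Having this structure, we finish as in the symmetric case. If $\mathcal G$ is non-bipartite, following odd cycles of $\mathcal G$, which lift to non-backtracking cycles because $\deg\ge3$, forces $\omega_x^2$ constant. Then $v_{xy}$ is $\pm1$-valued, and the consistency with the reversed edge condition yields $\dist(U,\mathscr R_B)\gtrsim\max_e|u_e-1|\wedge\max_e|u_e+1|$. Since $|u_e\mp1|\gtrsim\sin\theta_e\sim\langle\im m\rangle$ for the relevant sign, this gives \eqref{eq.nonbipartite-bulk-stability}. In the bipartite case we obtain phases $\theta_*$ on edges ending in $\mathbb V_+$ and $\pi-\theta_*$ on edges ending in $\mathbb V_-$, with $\sin\theta_*\sim\langle\im m\rangle$ as in \eqref{eq.sin-theta-star-symmetric}. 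Multiplying \eqref{eq.qve-NB} by $e^{\pm\mathrm i\theta_*}$ and taking imaginary parts gives $|\tau+a_v|\sin\theta_*\lesssim\dist(U,\mathscr R_B)+\eta$, exactly as in \eqref{eq.plus-z-ai-control}. This yields \eqref{eq.general-bulk-stability}.

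The main obstacle is the middle step: converting near-resonance $u_e\approx\overline{\omega_x}\omega_y$, which only constrains $2\theta_e$ modulo $2\pi$, into the rigid two-phase picture. Here a possible arbitrary potential $\omega$ on vertices must be ruled out using the Dyson equation rather than the graph alone. The first approach to try is a uniform-in-$u$ argument at each vertex $v$. Comparing the Dyson equations of two edges $(u_1,v),(u_2,v)$ yields $-1/m_{u_1v}+m_{vu_1}=-1/m_{u_2v}+m_{vu_2}$. Taking the phase rotation by $\omega$ and imaginary parts, with $\sin\theta\sim\langle\im m\rangle$ small, should force $\phi_{u_1}\approx\phi_{u_2}$ modulo $\pi$. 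That would make $\omega^2$ constant on distance-two components, as in the (NB) part of Proposition~\ref{prop:psi-positive-and-nondegenerate}.
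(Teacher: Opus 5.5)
Your outer frame matches the paper's. You reduce via the Markov normalization and Lemma~\ref{lem:UminusQ-graph-resonance} to $s_{\rm min}(I-m^2S)\gtrsim\dist(U,\mathscr R_B)^2$, dispose of the case $\dist(U,\mathscr R_B)>c\langle\im m\rangle$, absorb $\eta$ with Proposition~\ref{pro.simple-bound-stability}, and finish with imaginary parts of the Dyson equation. The gap is the middle step, which you leave open, and your hints for it point the wrong way. Write $\Delta:=\dist(U,\mathscr R_B)\le c\langle\im m\rangle$.

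First, the branch question for $\theta_{xy}+\theta_{yx}$ is settled by this very regime. Since $|m_e|\sim1$ and $\im m_e\sim\langle\im m\rangle$, both $\theta_e$ and $\pi-\theta_e$ are $\gtrsim\langle\im m\rangle$. So $\theta_{xy}+\theta_{yx}\in(0,2\pi)$ stays at distance $\gtrsim\langle\im m\rangle\gg\Delta$ from $0$ and $2\pi$, and $v_{xy}v_{yx}=1$ then forces $\theta_{xy}+\theta_{yx}=\pi+O(\Delta)$.

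Second, your comparison argument assumes that $\sin\theta_e\sim\langle\im m\rangle$ is small. In the bulk $\langle\im m\rangle$ is typically of order one; the only small parameter is $\Delta/\langle\im m\rangle$. The missing device is the vertex quantity $h_y:=z+a_y+\sum_{k\sim y}m_{yk}=-1/m_{xy}+m_{yx}$, which is independent of the incoming neighbor $x$, together with the identity $h_ym_{xy}=m_{xy}m_{yx}-1$. By the first point, the right-hand side lies within $O(\Delta)$ of the negative real axis and has modulus at least one. Hence $|h_y|\sim1$ and $\theta_{xy}=\pi-\arg h_y+O(\Delta)$ for every $x\sim y$, so $\theta_{yk}=\arg h_y+O(\Delta)$ for every outgoing edge. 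Then $\im(e^{-\ii\arg h_y}h_y)=0$ gives $|\tau+a_y|\sin(\arg h_y)\lesssim\eta+\Delta$, while $\sin(\arg h_y)\gtrsim\im h_y\gtrsim\langle\im m\rangle$. This proves \eqref{eq.general-bulk-stability} vertex by vertex, without first building the global two-phase picture.

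Your non-bipartite finish also fails as written. The bound $\Delta\gtrsim\max_e|u_e-1|\wedge\max_e|u_e+1|$ is vacuous whenever all $\theta_e$ are near $\pi/2$, because $|u_e+1|=2|\cos\theta_e|$. This happens, for instance, near $z=0$ with $\mathbf a=0$ on a $(q+1)$-regular base graph, where $m_e\equiv\ii q^{-1/2}$ and $u_e\equiv-1$.

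Knowing that $v$ is $\pm1$-valued is therefore not enough. You also need that $v_{xy}=\overline{\omega_x}\omega_y$ with $\omega_x^2$ constant equals $+1$ on some edge when $\mathcal G$ is non-bipartite, since only bipartite graphs admit $\omega_y=-\omega_x$ along every edge. On that edge $|u_e-1|=2\sin\theta_e\sim\langle\im m\rangle$, which gives the bound.

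Likewise, the ``non-bipartite resonance'' $U\approx I$ you list can never be within $c\langle\im m\rangle$ of $U$. The actual content is that for non-bipartite $\mathcal G$ no near-resonant configuration exists at all. The paper gets this from the two facts above. Take an odd cycle of $\mathcal G$ of length $\ell$, oriented cyclically; it is a directed cycle of the non-backtracking graph. Consecutive phases along it satisfy $\theta_{e_j}+\theta_{e_{j+1}}=\pi+O(\Delta)$, so pairing gives $\sum_j\theta_{e_j}=\theta_{e_0}+\tfrac{\ell-1}{2}\pi+O(\Delta)$. The resonance condition gives $\dist(\sum_j\theta_{e_j},\pi\mathbb Z)\lesssim\Delta$. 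Hence $\dist(\theta_{e_0},\pi\mathbb Z)\lesssim\Delta$, which contradicts $\dist(\theta_{e_0},\pi\mathbb Z)\ge\sin\theta_{e_0}\gtrsim\langle\im m\rangle$ once $c$ is small.
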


\begin{proof}
     First, we list two bounds that will be used frequently in the following proof. It follows from \eqref{eq.m-bound} that, for all $z\in\Cp(I)$ with $|z|\le 2\Sigma$, 
    \begin{equation}\label{eq.m-bulk-bound}
      |m_{xy}|\sim 1,\qquad (x,y)\in\vec E_{\mathcal G}.
    \end{equation}
    Also, it follows from \eqref{eq.m-Im} that, for all $z\in\Cp(I)$ with $|z|\le 2\Sigma$,
    \begin{equation}\label{eq.im-m-comparability}
       \im m_{xy}\sim\langle \im m\rangle,\qquad(x,y)\in\vec E_{\mathcal G}.
    \end{equation}
    To upper bound $\|(I-m^2S)^{-1}\|_2$, it is sufficient to give a lower bound for $s_{\rm min}(I-m^2S)$.
    The same argument in Proposition \ref{prop.bulk-stability-symmetric} implies 
    \begin{equation}\label{eq.s1-U-F}
      s_{\rm min}(I-m^2S)\gtrsim \dist(U,\mathscr{R}_S)^2.
    \end{equation}
    Hence, it is enough to analyze $\dist(U,\mathscr{R}_S)$.

If
$
    \dist(U,\mathscr{R}_S)>c\langle\im m\rangle
$
for a sufficiently small constant \(c>0\), depending only on the model parameters,
then \eqref{eq.s1-U-F} gives
\begin{equation}\label{eq.big-im-case}
    s_{\rm min}(I-m^2S)
    \gtrsim
    \dist(U,\mathscr{R}_S)^2
    \gtrsim
    \langle\im m\rangle^2
    \gtrsim
    \bigl([\max_y |z+a_y|]\langle\im m\rangle\bigr)^2.
\end{equation}
Hence it remains to consider the case
$
    \dist(U,\mathscr{R}_S)\le c\langle\im m\rangle.
$

Choose \(V_*=\diag(v^*_e)_{e\in\vec E_{\mathcal G}}\in\mathscr R_S\) such that
$
    \max_{e\in\vec E_{\mathcal G}}|u_e-v^*_e|\le 2\dist(U,\mathscr{R}_S) .
$
Write
\[
    u_{xy}=e^{-2\mathrm i\theta_{xy}},
    \qquad
    \theta_{xy}:=\arg m_{xy}\in(0,\pi).
\]
By Lemma~\ref{lem.resonant-set-B}, every \(V_*\in\mathscr R_S\) satisfies
$
    v^*_{xy}v^*_{yx}=1
$
for all
$
 x\sim y.
$
For each oriented edge \((x,y)\), 
\smallskip
choose \(\varphi_{xy}\in[0,\pi]\) such that
$
    v^*_{xy}=e^{-2\mathrm i\varphi_{xy}} .
$
Since
\[
    |u_{xy}-v^*_{xy}|
    =
    |e^{-2\mathrm i\theta_{xy}}-e^{-2\mathrm i\varphi_{xy}}|
    \le 2\dist(U,\mathscr{R}_S),
\]
we first obtain
$
    \dist\bigl(\theta_{xy}-\varphi_{xy},\pi\mathbb Z\bigr)
    \lesssim \dist(U,\mathscr{R}_S) .
$
On the other hand, it follows that
\[
    \sin\theta_{xy}
    =
    \frac{\im m_{xy}}{|m_{xy}|}
    \sim
    \langle\im m\rangle .
\]
Since \(\dist(U,\mathscr{R}_S)\le c\langle\im m\rangle\), choosing \(c>0\) sufficiently small ensures that the wrong branch modulo \(\pi\) is excluded. Thus, after choosing the representative \(\varphi_{xy}\) consistently, we have
$
    |\theta_{xy}-\varphi_{xy}|
    \lesssim \dist(U,\mathscr{R}_S) .
$

Moreover, from \(v^*_{xy}v^*_{yx}=1\), we may choose the representatives so that
$
    \varphi_{yx}=\pi-\varphi_{xy}.
$
Consequently, we have
\begin{equation}\label{eq.nb-reversal-phase-from-resonance}
    \big|
    \theta_{xy}+\theta_{yx}-\pi
    \big|
    \lesssim
    \dist(U,\mathscr{R}_S),
    \qquad x\sim y.
\end{equation}

For every oriented edge \((x,y)\), the Dyson equation reads
\[
    -\frac1{m_{xy}}
    =
    z+a_y+\sum_{\substack{k\sim y\\ k\neq x}}m_{yk}.
\]
Adding the missing term \(m_{yx}\), we obtain
\begin{equation}\label{def.hy}
    -\frac1{m_{xy}}+m_{yx}
    =
    z+a_y+\sum_{k\sim y}m_{yk}
    =:h_y(z).
\end{equation}
Define 
$
    \phi_y:=\arg h_y\in(0,\pi),
$
where \(\phi_y\in(0,\pi)\) follows from
\[
    \im h_y
    =
    \eta+\sum_{k\sim y}\im m_{yk}
    >0.
\]
Multiplying \eqref{def.hy} by \(m_{xy}\), we get
$
    h_y m_{xy}
    =
    m_{xy}m_{yx}-1.
$
By \eqref{eq.nb-reversal-phase-from-resonance}, the number
\(m_{xy}m_{yx}\) lies within \(O(\dist(U,\mathscr{R}_S))\) of the negative real axis.
Together with \(|m_{xy}|\sim |m_{yx}|\sim1\), this implies
$
    |h_y|\sim 1
$
and
$
    \big|\theta_{xy}-(\pi-\phi_y)\big|
    \lesssim
    \dist(U,\mathscr{R}_S).
$
Combining this with \eqref{eq.nb-reversal-phase-from-resonance}, we obtain
\begin{equation}\label{eq.nb-outgoing-phase-close}
    \big|\theta_{yk}-\phi_y\big|
    \lesssim
    \dist(U,\mathscr{R}_S),
    \qquad (y,k)\in\vec E_{\mathcal G}.
\end{equation}

Now use the identity
\[
    h_y=z+a_y+\sum_{k\sim y}m_{yk}.
\]
Since \(e^{-\mathrm i\phi_y}h_y\) is positive real, its imaginary part vanishes:
\[
    0
    =
    \im\bigl(e^{-\mathrm i\phi_y}h_y\bigr)
    =
    \im\bigl(e^{-\mathrm i\phi_y}(z+a_y)\bigr)
    +
    \im\Bigl(\sum_{k\sim y}e^{-\mathrm i\phi_y}m_{yk}\Bigr).
\]
Writing \(z=\tau+\mathrm i\eta\), the first term equals
    \[
        \im\big(e^{-\mathrm i\phi_y}(z+a_y)\big)
        =
        \im\big(e^{-\mathrm i\phi_y}(\tau+a_y+i\eta)\big)
        =
        -(\tau+a_y)\sin\phi_y+\eta\cos\phi_y.
    \]
By \eqref{eq.nb-outgoing-phase-close} and \(|m_{(y,v)}|\sim1\), the second term is \(O(\dist(U,\mathscr{R}_S))\). Hence
$
    |\tau+a_y|\sin\phi_y
    \lesssim
    \eta+\dist(U,\mathscr{R}_S).
$
On the other hand, it follows from $|h_y|\sim 1$ that 
\[
    \sin\phi_y
    =
    \frac{\im h_y}{|h_y|}
    \gtrsim
    \im h_y
    \sim
    \langle\im m\rangle .
\]
Therefore, we have
\begin{equation}\label{eq.nb-delta-eta-controls-tau-ay}
    \dist(U,\mathscr{R}_S)+\eta
    \gtrsim
    |\tau+a_y|\langle\im m\rangle,
    \qquad y\in \mathbb{V}_{\mathcal G}.
\end{equation}

Combining \eqref{eq.s1-U-F}, Proposition~\ref{pro.simple-bound-stability}, and
\eqref{eq.nb-delta-eta-controls-tau-ay}, we get
\[
    s_{\rm min}(I-m^2S)
    \gtrsim
    \bigl(|\tau+a_y|\langle\im m\rangle\bigr)^2
    +
    \bigl(\eta\langle\im m\rangle\bigr)^2 .
\]
Since
$
    |z+a_y|\le |\tau+a_y|+\eta,
$
it follows that
$
    s_{\rm min}(I-m^2S)
    \gtrsim
    \bigl(|z+a_y|\langle\im m\rangle\bigr)^2.
$
Taking the maximum over \(y\in \mathbb{V}_{\mathcal G}\), we obtain
\[
    s_{\rm min}(I-m^2S)
    \gtrsim
    \bigl([\max_y|z+a_y|]\langle\im m\rangle\bigr)^2.
\]
This proves \eqref{eq.general-bulk-stability}.

It remains to prove the improved estimate when \(\mathcal G\) is non-bipartite. Using \eqref{eq.big-im-case} once more, it remains to consider the case
$
    \dist(U,\mathscr R_S)\le c\langle \im m\rangle,
$
in which the phase relations derived above are valid.

    Since \(\mathcal G\) is non-bipartite, it contains an undirected odd cycle.
    We orient it cyclically and write
    \[
        \mathscr C_{\rm odd}:
        i_0\to i_1\to\cdots\to i_{\ell_{\rm odd}}=i_0,
    \]
    where \(\ell_{\rm odd}\) is odd.
Set \(e_j:=(i_j,i_{j+1})\). Since \(V_*\in\mathscr R_S\), the resonance condition gives
$
    \prod_{j=0}^{\ell_{\rm odd}-1} v^*_{e_j}=1.
$
Using \(\max_{e\in\vec E_{\mathcal G}}|u_e-v^*_e|\le 2\dist(U,\mathscr R_S)\), we obtain
\[
    \dist\Bigl(\sum_{j=0}^{\ell_{\rm odd}-1}\theta_{e_j},\,\pi\mathbb Z\Bigr)
    \lesssim
    \dist(U,\mathscr R_S).
\]
On the other hand, by \eqref{eq.nb-outgoing-phase-close} and
\eqref{eq.nb-reversal-phase-from-resonance}, consecutive phases along the odd
cycle alternate between complementary values. Pairing the edges
\(e_1,e_2\), \(e_3,e_4\), \(\dots\), \(e_{\ell_{\rm odd}-2},e_{\ell_{\rm odd}-1}\), we get
\[
    \sum_{j=0}^{\ell_{\rm odd}-1}\theta_{e_j}
    =
    k\pi+\theta_{e_0}+O(\dist(U,\mathscr R_S))
\]
for some \(k\in\mathbb Z\). Hence
$
    \dist(\theta_{e_0},\pi\mathbb Z)\lesssim\dist(U,\mathscr R_S).
$
Using $|m_{e_0}|\sim 1$ yields
\[
    \dist(\theta_{e_0},\pi\mathbb Z)
    \sim
    \sin\theta_{e_0}
    =
    \frac{\im m_{e_0}}{|m_{e_0}|}
    \sim
    \langle\im m\rangle.
\]
Therefore
$
    \dist(U,\mathscr R_S)\gtrsim\langle\im m\rangle.
$
Putting this into \eqref{eq.s1-U-F} gives
$
    s_{\rm min}(I-m^2S)\gtrsim \langle\im m\rangle^2,
$
which proves the improved estimate.
\end{proof}

\bigskip

We now isolate the only remaining bulk instability mechanism. 
The preceding stability estimates show that, in both the symmetric and the non-backtracking settings, stability can degenerate in the bulk only when the relevant graph is bipartite and the shift vector is constant, \(\mathbf a\in\mathbb R\mathbf 1\). After translating the spectral parameter, this reduces to the special point \(z=0\) with \(\mathbf a=0\). 

The next proposition studies this exceptional point in detail and proves that the degeneracy is linear: the stability operator has smallest singular value of order \(|z|\), and hence its inverse has norm of order \(|z|^{-1}\).

\begin{proposition}[Bipartite instability at the origin]\label{prop.bipartite-origin-instability}
    Suppose that $S$ is a nonnegative and irreducible matrix and that the solution $m(z)$ is uniformly bounded on $N_\epsilon(0)$, where $\epsilon>0$ depends only on model parameters. Assume moreover that $\mathbf{a}=0$, and that either \textup{(Sym)} holds and the support graph of $S$ is bipartite, or \textup{(NB)} holds and the base graph $\mathcal G$ is bipartite.
    Then, we have
    \[
        \big\|(I-m(z)^2S)^{-1}\big\|_2\sim |z|^{-1},\qquad 0<|z|\le\epsilon.
    \]
\end{proposition}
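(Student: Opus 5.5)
The plan is to prove the upper bound and the lower bound on $\|(I-m^2S)^{-1}\|_2$ separately, working throughout with $R=U-F$ via \eqref{eq.I-m2S-transform} and $|m_i|\sim1$ near $0$. The first step is structural. Let $\mathbb V_+\sqcup\mathbb V_-$ be the bipartition (of the support graph in (Sym), or of $\mathcal G$ in (NB), with directed edges $(x,y)$ labelled by the side of the terminal vertex $y$). The chiral symmetry $m(z)=-\overline{m(-\bar z)}$ from \eqref{eq.invariant-transformation} holds since $\mathbf a=0$. Combined with the bipartite structure, I would use this to show that $m(z)^2$ is well controlled at $z=0$: $m(0)$ is purely imaginary with $\langle\im m(0)\rangle\sim1$, because $m(0)=0$ is excluded by \eqref{eq.m-bound}. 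Hence $u_e(0)=|m_e|^2/m_e^2=-1$ for every $e$.

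The next step is to note that $U(0)=-I$ lies in the resonance set. Every directed cycle of a bipartite support graph has even length, so the products along cycles equal $1$, and $-I\in\mathscr R_S$; in the (NB) case this follows from Lemma~\ref{lem.resonant-set-B} with $\omega_x=\pm1$ according to the side. Then $R(0)=-(I+F(0))$. Conjugating as in \eqref{eq:B-conjugation}, this becomes $-(I+\lambda_F Q_F)$. Here $\lambda_F(0)=1$ by \eqref{eq.F-eigenvalue} with $\eta=0$, and $Q_F$ has period $2$ with eigenvalue $-1$, by Lemma~\ref{lem:cyclic-decomp}. So $R(0)$ has a one-dimensional kernel spanned by $D(r)\epsilon$, where $\epsilon=\pm1$ on the two cyclic classes. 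This kernel is simple by Perron--Frobenius, and the rest of the spectrum is bounded away from $0$ by the argument of Lemma~\ref{lem:Q-second-singular-gap-lambda} applied to $-\lambda Q$. From here I would repeat the Schur-complement / Rouché argument of Lemma~\ref{lem:B-bad-direction-nonsym-local}, Steps 2--4, with the bad direction $D(r)\epsilon$ in place of $r$. This yields a simple small eigenvalue $\beta(z)$ of $R(z)$ with a uniform spectral gap, together with $\|R^{-1}\|_2\sim|\beta|^{-1}$ for $|z|\le\epsilon$.

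It then remains to show $|\beta(z)|\sim|z|$. For the upper bound $|\beta|\lesssim|z|$, the Hölder/analytic regularity near $0$ is not enough. Instead I would differentiate: using the Puiseux expansion, $m$ is analytic at $0$ (since $I-m^2S$ at $0$ is invertible on the odd part). The cleaner route is to test with the explicit vector $\epsilon m$. From the Dyson equation with $\mathbf a=0$, $S(\epsilon m)$ has sign pattern opposite to $\epsilon$ on neighbours. This gives $(I-m^2S)(\epsilon m)=\epsilon m+\epsilon m^2 Sm=\epsilon m(1+mSm)=-z\,\epsilon m^2$, because $-1/m=z+Sm$ implies $1+mSm=-zm$. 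Hence $s_{\min}(I-m^2S)\lesssim|z|$, which gives the lower bound $\|(I-m^2S)^{-1}\|_2\gtrsim|z|^{-1}$.

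For the other direction, $|\beta|\gtrsim|z|$, I would compute $\beta$ via the left/right eigenvector identity analogous to \eqref{eq:bad-scalar-right}: $\beta=\langle\tilde l,R\,x\rangle/\langle \tilde l,x\rangle$ with $x=D(|m|)^{-1}$-transformed $\epsilon m$. By the identity above, $Rx=-z\,(\text{explicit vector of size}\sim1)+$ a component in the stable complement. Projecting onto $\tilde l$ gives $\beta=-z\,c(z)+O(|z|^2)$ with $c(0)=\langle \tilde l, \cdot\rangle$ evaluated on a vector like $\epsilon|m|^2\cdot(\text{phase})$. The main obstacle is showing $|c(0)|\gtrsim1$, i.e.\ the nondegeneracy of this overlap. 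I would try to show it via positivity: at $z=0$ everything is purely imaginary, so the relevant pairing reduces to $\langle l, \epsilon\cdot\epsilon\,|m|r\rangle=\langle l|m|r\rangle>0$, because the signs $\epsilon$ cancel. If this works, continuity gives $|\beta|\sim|z|$, and together with $\|R^{-1}\|_2\sim|\beta|^{-1}$ this concludes the proof.
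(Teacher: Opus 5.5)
Your proposal is correct. Its two essential ingredients coincide with the paper's. Your sign vector $\epsilon$ (the paper's $T$) gives the same test vector $\epsilon m=b(z)$ and the same identity $(I-m^2S)b=-zTm^2$. The lower bound also rests on the same positive quantity: the paper's $l_0^\top q^2$ equals, up to normalization, your $\langle l(0)q\rangle$, because $l(0)\propto q\,l_0$ and $r(0)\propto\mathbf 1$.

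The difference lies in how the lower bound $s_{\min}\gtrsim|z|$ is organized.
\begin{itemize}
\item The paper never tracks an eigenvalue for $z\neq0$. It splits $\mathbf w=\theta b(z)+x$ along a fixed complement $X$ of $\ker(I+q^2S)$ and runs a two-case estimate. It pairs with the fixed left kernel vector $Tl_0$ and absorbs the leftover $(Tl_0)^\top(I-m^2S)x=o(|z||\theta|)$.
\item You re-run the Schur-complement/Rouch\'e argument of Lemma~\ref{lem:B-bad-direction-nonsym-local} around the new bad direction $D(r)\epsilon$ to get $\|R^{-1}\|_2\sim|\beta|^{-1}$. You then compute $\beta$ through the exact left eigenvector.
\end{itemize}

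That computation is cleaner than you state. With $x=\epsilon m/|m|$ one has exactly $Rx=D(|m|/m^2)(I-m^2S)(\epsilon m)=-z\,\epsilon|m|$. Hence $\beta\langle\tilde l,x\rangle=-z\langle\tilde l,\epsilon|m|\rangle$, with no stable-complement term and no $O(|z|^2)$ error. The step you flag as the main obstacle is immediate. Since $\epsilon F(0)\epsilon=-F(0)$, you get $\tilde l(0)\propto\epsilon\,l(0)$, so $\langle\tilde l(0),\epsilon|m(0)|\rangle\propto\langle l(0)q\rangle>0$. In fact $\beta(z)=\mathrm i z\langle l(0)q\rangle/\langle l(0)\rangle+o(|z|)$.

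The paper's route is more elementary: it needs only continuity of $m$ at $0$ and the kernel structure of $I+q^2S$. Yours costs the perturbative spectral-gap argument. In exchange it yields an explicit leading-order formula for the small eigenvalue. It also places the bipartite degeneracy in the same single-bad-direction framework used at edges and cusps.

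One aside does not hold as written. The claim that $m$ is analytic at $0$ because $I-m^2S$ is ``invertible on the odd part'' is not justified. Solvability of $(I+q^2S)m'(0)=-q^2$ would require the nontrivial orthogonality $\langle Tl_0,q^2\rangle=0$. You correctly do not rely on it.
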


\begin{proof}
    We first explain the common algebraic structure in the two cases.
    In the symmetric bipartite case, let
    $
        \{1,\dots,d\}=\mathbb V_+\sqcup \mathbb{V}_-
    $
    be the bipartition of the support graph of \(S\), and define
    \[
        T_i=
        \begin{cases}
            1, & i\in\mathbb V_+,\\
            -1, & i\in\mathbb V_-.
        \end{cases}
    \]
    In the non-backtracking case, the index set is
    \(\vec E_{\mathcal G}\). Let 
    $
        \mathbb{V}_{\mathcal G}=\mathbb V_+\sqcup \mathbb V_-
    $
    be the bipartition of the base graph $\mathcal G$, and define
    \[
        T_{uv}
        =
        \begin{cases}
            1, & v\in\mathbb V_+,\\
            -1, & v\in\mathbb V_- .
        \end{cases}
    \]
    In both cases, \(T:=\diag(T_i)\) satisfies
    \begin{equation}\label{eq.period-two-structure}
        T^2=I,
        \qquad
        TST=-S .
    \end{equation}
    Indeed, in the symmetric case every edge of the support graph connects
    the two bipartition classes, while in the non-backtracking case every
    admissible step
    $
        (u,v)\to (v,w)
    $
    changes the bipartition class of the terminal vertex.

    Since $m(z)$ is uniformly bounded on $N_\epsilon(0)$, $m(0)$ is well defined and finite. 
    The same argument as in \eqref{eq.invariant-transformation} shows that \(m(0)\) is purely imaginary; write $m(0)=\mathrm iq$ with $q>0$.
    Evaluating the Dyson equation at \(z=0\) and using \(m(0)=\mathrm i q\), we obtain
    \[
        \frac1q = Sq .
    \]
    Hence
    $
        q^2S q=q.
    $
    Therefore \(q^2S\) is an irreducible nonnegative matrix with Perron eigenvalue \(1\) and right Perron vector \(q\).

    Since \(TST=-S\), we have $ST=-TS$. Hence, using the commutativity of \(T\) and \(D(q^2)\) implies
    \[
        q^2 S T = -q^2TS = -Tq^2S .
    \]   
    Acting on $q$ and using $q^2Sq=q$ yields
    \[
        q^2S(Tq)=-Tq .
    \]
    Lemma \ref{lem.PF} implies that the eigenvalue \(-1\) of \(q^2S\) is simple. Consequently, using $m(0)=iq$ yields
    $
        I-m(0)^2S
        =
        I+q^2S
    $
    has a one-dimensional kernel spanned by \(Tq\), and the rest of its
    spectrum is separated from zero.

    Let \(l_0>0\) be the left Perron vector of \(q^2S\):
    $
        l_0^\top D(q^2)S=l_0^\top .
    $
    Then \(Tl_0\) is a left eigenvector of \(q^2S\) corresponding to the
    eigenvalue \(-1\), and hence \(Tl_0\) spans the left kernel of
    \(I+q^2S\).

    We now show that the zero eigenvalue at \(z=0\) moves away from zero linearly as \(z\) varies.
    Set
    $
        b(z):=T m(z).
    $
    Using \(ST=-TS\) and Dyson equation implies that
    \[
    \begin{aligned}
        S b(z)
        =
        STm(z)
        =
        -TSm(z)                                                     
        =
        -T\Big(-\frac1{m(z)}-z\Big)
        =
        T\Big(\frac1{m(z)}+z\Big).
    \end{aligned}
    \]
    Multiplying by \(D(m(z)^2)\) and using $D(m^2)T=TD(m^2)$, we obtain
    \[
        m(z)^2 S b(z)
        =
        T\big(m(z)+z\,m(z)^2\big).
    \]
    Therefore, it follows that
    \begin{equation}\label{eq.bad-vector-linear-opening}
    \begin{aligned}
        \big(I-m(z)^2S\big)b(z)
        =
        Tm(z)-T\big(m(z)+z\,m(z)^2\big) 
        =
        -z\,T m(z)^2 .
    \end{aligned}
    \end{equation}
    Since \(m(z)\to iq\) with \(q>0\), we have \(|b(z)|_2\sim 1\) and \(|m_i(z)|\sim1\) when $z\in N_\epsilon(0)$. This gives 
    \[
      s_{\rm min}(I-m(z)^2S)
      \le
      \frac{|(I-m(z)^2S)b(z)|_2}{|b(z)|_2}
      =\frac{|zTm(z)^2|_2}{|b(z)|_2}
      \lesssim |z|.
    \]

    It remains to prove the matching lower bound. Choose a fixed complementary
    subspace \(X\) such that
    \begin{equation}\label{eq.Cd-decomposition1}
      \mathbb C^d=\operatorname{span}\{Tq\}\oplus X .
    \end{equation}
    Since
    $
        \ker(I+q^2S)=\operatorname{span}\{Tq\},
    $
    the operator \(I+q^2S\) is injective on \(X\). Hence, by finite
    dimensionality, there exists \(c>0\) depending only on model parameters such that
    \[
        |(I+q^2S)x|_2\ge c|x|_2,
        \qquad x\in X .
    \]
    Since \(I-m(z)^2S\to I+q^2S\) as \(z\to0\), we also have, for \(z\in N_\epsilon(0)\),
    \begin{equation}\label{eq.perturbed-lower-bound}
        |(I-m(z)^2S)x|_2\ge \frac c2|x|_2,\qquad x\in X .
    \end{equation}

    Since \(b(z)=Tm(z)\to iTq\), the decomposition above is stable under this
    small perturbation. Therefore, for \(z\in N_\epsilon(0)\), we can also decompose $\C^d$:
    \begin{equation}\label{eq.Cd-decomposition2}
      \mathbb C^d=\operatorname{span}\{b(z)\}\oplus X .
    \end{equation}
    Thus every \(\mathbf{w}\in\mathbb C^d\) with $|\mathbf{w}|_2=1$ can be written uniquely as
    \begin{equation}\label{eq.z-decomposition}
      \mathbf{w}=\theta b(z)+x,\qquad x\in X.
    \end{equation}
    Since the decomposition \eqref{eq.Cd-decomposition2} is a small perturbation of the fixed decomposition \eqref{eq.Cd-decomposition1}, the corresponding coordinate map
    \[
        (\theta,x)\mapsto \theta b(z)+x,\qquad x\in X,
    \]
    has a uniformly bounded inverse for \(z\in N_\epsilon(0)\). Hence, the decomposition \eqref{eq.z-decomposition} is uniformly stable:
    $
        |\theta|+|x|_2\sim |\mathbf{w}|_2=1 .
    $

    We distinguish two cases. First suppose that
    $
        |x|_2\ge C|z||\theta|,
    $
    where \(C>0\) is sufficiently large constant depending only on model parameters. Using
    \eqref{eq.perturbed-lower-bound} and \eqref{eq.bad-vector-linear-opening}, we get
    \[
    \begin{aligned}
       \big|\big(I-m(z)^2S\big)\mathbf{w}\big|_2
        &=
        \big|
            \theta\big(I-m(z)^2S\big)b(z)
            +
            \big(I-m(z)^2S\big)x
        \big|_2                                    \\
        &\ge
        \big|\big(I-m(z)^2S\big)x\big|_2
        -
        |\theta|
        \big|\big(I-m(z)^2S\big)b(z)\big|_2       \\
        &\ge
        \frac c2|x|_2-C_1|z||\theta|            
        \gtrsim
        |x|_2.
    \end{aligned}
    \]
    Here the last step follows by choosing \(C\) sufficiently large. Moreover,
    since \(|\theta|+|x|_2\sim1\), the condition
    \(|x|_2\ge C|z||\theta|\) implies
    $
        |x|_2\gtrsim |z|.
    $
    Thus, in this case,
    $
        \big|\big(I-m(z)^2S\big)\mathbf{w}\big|_2\gtrsim |z|.
    $

    \smallskip
    It remains to consider the complementary case
    $
        |x|_2\le C|z||\theta|.
    $
    Pairing with the left kernel vector \(Tl_0\) of \(I+q^2S\), we use
    $
        (Tl_0)^\top(I+q^2S)=0
    $
    to obtain
    \[
    \begin{aligned}
        (Tl_0)^\top\big(I-m(z)^2S\big)x
        =
        (Tl_0)^\top
        \Big[
            \big(I-m(z)^2S\big)-\big(I+q^2S\big)
        \Big]x                                      
        =
        o(1)|x|_2                               
        =
        o(|z||\theta|).
    \end{aligned}
    \]
    On the other hand,  \eqref{eq.bad-vector-linear-opening} implies 
    \[
    \begin{aligned}
        (Tl_0)^\top\big(I-m(z)^2S\big)b(z)
        =
        -z\,l_0^\top m(z)^2 .
    \end{aligned}
    \]
    Since \(m(z)\to iq\), we have
    $
        m(z)^2=-q^2+o(1),
    $
    and therefore
    \[
        (Tl_0)^\top\big(I-m(z)^2S\big)b(z)
        =
        z\,l_0^\top q^2+o(|z|).
    \]
    It follows from \(l_0>0\) and \(q>0\) that
    $
        l_0^\top q^2>0.
    $
    This gives
    $
        \left|
        (Tl_0)^\top\big(I-m(z)^2S\big)b(z)
        \right|_2
        \gtrsim |z|.
    $
    Consequently, we have
    \[
    \begin{aligned}
        \left|
        (Tl_0)^\top\big(I-m(z)^2S\big)\mathbf{w}
        \right|
        \ge
        |\theta|
        \left|
        (Tl_0)^\top\big(I-m(z)^2S\big)b(z)
        \right|                                      
         -
        \left|
        (Tl_0)^\top\big(I-m(z)^2S\big)x
        \right|                                    
        \gtrsim
        |z||\theta|.
    \end{aligned}
    \]
    In the present case, \(|x|_2\le C|z||\theta|\) together with
    $
        |\theta|+|x|_2\sim1
    $
    implies \(|\theta|\gtrsim1\). Therefore, we have
    \[
        \big|
        (Tl_0)^\top\big(I-m(z)^2S\big)\mathbf{w}
        \big|
        \gtrsim |z|.
    \]
    Since \(Tl_0\) is fixed, this implies
    $
        \big|\big(I-m(z)^2S\big)\mathbf{w}\big|_2
        \gtrsim |z|.
    $

    Combining the two cases, we have shown that for every \(|\mathbf{w}|_2=1\),
    \[
        \big|\big(I-m(z)^2S\big)\mathbf{w}\big|_2\gtrsim |z|,\qquad z\in N_\epsilon(0).
    \]
    Therefore 
    $
        s_{\rm min}\big(I-m(z)^2S\big)\gtrsim |z|.
    $
    Together with the upper bound obtained by testing on \(b(z)=Tm(z)\), we get
    $
        s_{\rm min}\big(I-m(z)^2S\big)\sim |z|.
    $
    Equivalently, we have
    $
        \big\|\big(I-m(z)^2S\big)^{-1}\big\|_2\sim |z|^{-1}
    $
    for all $0<|z|\le\epsilon$.
\end{proof}

\section{Proof of Theorem \ref{thm.stability}}

We first observe that \(m(z)\) is uniformly bounded in all regimes considered in Theorem \ref{thm.stability}.
Away from the real axis this follows immediately from the rough bound
\eqref{eq.rough-boundedness}. Thus it suffices to consider \(\eta\ll1\). Near regular
edges and cusps, the boundedness is part of the local analysis used at the beginning of
the proof of Lemma~\ref{lem:B-bad-direction-nonsym-local}. In the strict bulk, it follows
from the definition of the regular support \eqref{def.Sj} and from the fact that
\(E\) stays at distance at least \(\epsilon\) from \(\partial\mathfrak S\). In the
off-support regime, the Stieltjes transform representation and
\(\dist(E,\overline{\mathfrak S})\ge\epsilon\) imply 
\[
    |m_i(z)|
    \le
    \epsilon^{-1}\int_{\mathbb R}\tilde v_i(d\tau)\lesssim 1.
\]
Hence by Theorem \ref{thm.bound}, we have $|m_i(z)|\sim (1+|z|)^{-1}$ uniformly in all four cases.

The above uniform boundedness directly implies the trivial lower stability bound in all four cases: 
\begin{equation}\label{eq.stability-lower-bound}
    \bigl\|(I-m(z)^2S)^{-1}\bigr\|_2
    \ge
    \frac{1}{\|I-m(z)^2S\|_2}
    \gtrsim 1 .
\end{equation}
Moreover, uniform boundedness and Proposition~\ref{pro.simple-bound-stability} give
$
    \|(I-m(z)^2S)^{-1}\|_2
    \lesssim 1+\eta^{-1}
    \lesssim 1 
$
for $\eta>\eta_0$.
Thus we only need to treat \(z=E+i\eta\) with \(0\le \eta\le \eta_0\), where \(\eta_0>0\) is a sufficiently small constant depending only on model parameters.
\medskip

\textit{Case 1: Edge Stability.} Fix a regular edge $\tau\in\partial\mathfrak{S}$. Unless otherwise stated, all statements in this case are restricted to $z\in N_\epsilon(\tau)$.
By \eqref{eq.I-m2S-transform} and $|m_i|\sim 1$, it is sufficient to bound $\|R^{-1}\|_2$.

Next we analyze $\langle\im m\rangle$. 
Near a regular edge \(\tau\), the density has square-root behavior on the
support side. More precisely, assuming \(\tau+ x\) lies in the self-consistent support for \(x>0\), Theorem \ref{thm:singularities} gives
\[
    v_i(\tau+ x)\sim \sqrt{x},
    \qquad 0<x<\epsilon .
\]
It follows from Stieltjes representation, that
\[
    \im m_i(z)
    =
    \int_{\mathbb R}
    \frac{\eta v_i(\omega)d\omega}{(\omega-E)^2+\eta^2}.
\]

We first consider the support side, namely \(E=\tau+\kappa\). The main
contribution comes from the edge neighborhood. After the change of variables $\omega-\tau=x$ and a standard estimate on the Poisson kernel, we obtain
\[
    \im m_i(E+i\eta)
    \sim
    \int_0^\epsilon
    \frac{\eta \sqrt{x}\,dx}{(x-\kappa)^2+\eta^2}\sim \sqrt{\kappa+\eta},
\]
which gives
$
    \langle \im m(z)\rangle
    \sim
    \sqrt{\kappa+\eta}
$
on the support side. As $\tau$ is a regular edge, it follows that $\sigma\sim 1$. Therefore, in the expansion \eqref{eq:beta-expansion-local}, the first term satisfies $\mu\eta/\delta\sim \eta/\sqrt{\kappa+\eta}$, while the second term satisfies $|2\mathrm i\sigma\delta|\sim\sqrt{\kappa+\eta}\ge\eta/\sqrt{\kappa+\eta}$. Since all the remaining terms are of lower order, we obtain $|\beta|\sim\sqrt{\kappa+\eta}$. Combining this estimate with \eqref{eq.Rinverse-bound} yields \eqref{eq.edge-stability} on the support side.

On the gap side, \(E=\tau-\kappa\). Again using
\(\omega=\tau+ x\) and elementary Poisson kernel estimate, the local contribution is now
\[
    \im m_i(E+\ii\eta)
    \sim
    \int_0^\epsilon
    \frac{\eta \sqrt{x}\,dx}{(x+\kappa)^2+\eta^2}\sim \frac{\eta}{\sqrt{\kappa+\eta}},
\]
which implies
$
    \langle \im m(z)\rangle
    \sim
    \eta/\sqrt{\kappa+\eta}
$
on the gap side. 
Therefore, in the expansion \eqref{eq:beta-expansion-local}, 
the first term $\mu\eta/\delta \sim \sqrt{\kappa+\eta}$, while the second term $|2\mathrm i\sigma\delta|\sim \eta/\sqrt{\kappa+\eta}\le\sqrt{\kappa+\eta}$.
Since all the remaining terms are of lower order, we obtain $|\beta|\sim\sqrt{\kappa+\eta}$.
Combining this estimate with \eqref{eq.Rinverse-bound} yields \eqref{eq.edge-stability} on the gap side.

\medskip

\textit{Case 2: Off-support Stability.}
We first prove the estimate on the real axis. 
It therefore suffices to consider the regime $|\tau|\le C$, where $C>0$ is chosen sufficiently large depending only on the model parameters. Indeed, for $|\tau|>C$, it follows from Theorem \ref{thm.bound} that
$
\|m(\tau)^2 S\|_2\le 1/2
$
by taking $C$ large enough. Hence the large-regime inverse bound \eqref{eq.large-regime-inverse-bound} directly yields \eqref{eq.off-support-stability}.

Fix \(\tau\in\mathfrak O\) with $|\tau|\le C$. Since \(\tau\notin\overline{\mathfrak S}\), the boundary value \(m(\tau)\) is real-valued and finite. Moreover, since $\tau$ stays a fixed positive distance $\epsilon\sim 1$ away from the support, Theorem \ref{thm.bound} implies $|m(\tau)|\sim 1$. In the remainder of the proof, unless otherwise specified, all quantities without an explicit argument are evaluated at \(\tau\).

As $|m|^2=m^2$, we have
$
    D^{-1}(|m|)\bigl(I-m^2S\bigr)D(|m|)
    =
    I-F.
$
It suffices to prove a uniform bound on \((I-F)^{-1}\).
Differentiating the Dyson equation on the real axis gives
\[
    \bigl(I-m^2S\bigr)m'=m^2 .
\]
As each \(m_i\) is a Stieltjes transform (see \eqref{def.v}) and \(\tau\) lies outside the support, it follows that
\[
    m_i'(\tau)
    =
    \int_{\mathbb R}\frac{\tilde v_i(d\omega)}{(\omega-\tau)^2}
    >0 .
\]
By \(\dist(\tau,\overline{\mathfrak S})\ge \epsilon\), we have
$
    m_i'(\tau)
    \le
    \epsilon^{-2}\,\widetilde v_i(\mathbb R)
    \lesssim1 .
$
Moreover, since the measure $\widetilde{v}_i$ is supported in the compact interval \([-\Sigma,\Sigma]\) and
\(\widetilde v_i(\mathbb R)\sim1\), we also have
\[
    m_i'(\tau)
    \ge
    \frac{\widetilde v_i(\mathbb R)}{(|\tau|+\Sigma)^2}
    \gtrsim
    \frac1{(1+|\tau|)^2}.
\]
Combining $|\tau|\le C$ gives $m_i'(\tau)\sim 1$.
Consequently, using $|m_i(\tau)|\sim 1$, we define
\[
    u(\tau):=D(|m(\tau)|)^{-1}m'(\tau).
\]
Then $u(\tau)$ is componentwise positive and satisfies
$
 u(\tau)\sim 1.
$
Applying the above similarity transform to the differentiated equation yields
$
    (I-F)u=|m| .
$
Therefore we have
$
    Fu=u-|m|<u
$
componentwise. 
Since \(u_i(\tau)\sim|m_i(\tau)|\sim1\) uniformly for \(\tau\in\mathfrak O\cap[-C,C]\), there exists \(c_*>0\) such that
\[
    Fu=u-|m|\le (1-c_*)u
\]
componentwise.

Define the weighted norm
\[
    |w|_{u(\tau)}:=\max_i\frac{|w_i|}{u_i(\tau)}.
\]
Then the non-negativity of \(F\) implies
\[
    |Fw|
    \le F|w|
    \le |w|_{u}Fu
    \le (1-c_*)|w|_u u,
\]
which yields
$
    |Fw|_{u}
    \le (1-c_*)|w|_{u}
$.
Thus it follows that
\[
    \|(I-F)^{-1}\|_{u}
    \le \sum_{k\ge0}\|F\|^k_{u}
    \le \frac1{c_*}\sim 1.
\]
Since \(u_i(\tau)\sim 1\), the norms \(|\cdot|_{u(\tau)}\) and \(|\cdot|_2\) are
uniformly equivalent. Hence it follows that
$
    \|(I-F(\tau))^{-1}\|_2\lesssim1 .
$
Using again the similarity relation between \(I-m^2S\) and \(I-F\) together with $|m_i|\sim 1$,
we obtain
\[
    \|(I-m(\tau)^2S)^{-1}\|_2\lesssim1,
    \qquad \tau\in\mathfrak O .
\]

It remains to pass from the real axis to \(z=\tau+\mathrm i\eta\) with small
\(\eta\). Note that all boundary values $m(\tau)$ appearing here are understood as the continuous extensions obtained by taking the limit $\eta\downarrow 0$. Since \(\mathfrak O\) has positive distance $\epsilon$ from the support, we have
\[
    \sup_{\tau\in\mathfrak O}
    \bigl\|
        (I-m(z)^2S)
        -
        (I-m(\tau)^2S)
    \bigr\|_2
    \longrightarrow 0,
    \qquad \eta\downarrow0 .
\]
Shrinking \(\eta_0\) if necessary, we get
\[
    \sup_{\tau\in\mathfrak O}
    \bigl\|
        (I-m(\tau)^2S)^{-1}
        \bigl[
            (I-m(z)^2S)
            -
            (I-m(\tau)^2S)
        \bigr]
    \bigr\|_2
    \le \frac12
\]
for all \(0\le\eta\le\eta_0\). Hence the Neumann expansion gives
\[
    \|(I-m(z)^2S)^{-1}\|_2
    \le
    2\|(I-m(\tau)^2S)^{-1}\|_2
    \lesssim1 .
\]
This proves \eqref{eq.off-support-stability}.

\medskip
\textit{Case 3: Cusp Stability.} Fix a regular cusp $\tau\in\partial\mathfrak{S}$. Unless otherwise stated, all statements in this case are restricted to $z\in N_\epsilon(\tau)$.
By \eqref{eq.I-m2S-transform} and $|m_i|\sim 1$, it is sufficient to bound $\|R^{-1}\|_2$. As $\tau$ is a regular cusp, it follows that $\psi\sim 1$ under the assumption (Sym) or (NB).  
Then, combining \eqref{eq.Rinverse-bound} and \eqref{eq:beta-expansion-local} gives $\|R^{-1}\|_2\lesssim \langle\im m\rangle^{-2}$. 

Next we analyze $\langle \im m\rangle$. Theorem \ref{thm:singularities} implies 
\[
    v_i(\tau+x)\sim |x|^{1/3},\qquad 0\le|x|<\epsilon.
\]
By the same argument as that in edge case, we obtain
\[
    \im m_i(E+i\eta)
    \sim
    \int_{-\epsilon}^{\epsilon}
    \frac{\eta |x|^{1/3}\,dx}{(x-(E-\tau))^2+\eta^2}
    \sim
    (\kappa+\eta)^{1/3},
\]
which implies that
$
    \langle \im m(z)\rangle
    \sim
    (\kappa+\eta)^{1/3}.
$
This proves \eqref{eq.cusp-stability}.

\medskip
\textit{Case 4: Bulk Stability.}
If we are not in the exceptional case, then Proposition \ref{prop.bulk-stability-symmetric} and Proposition \ref{pro.bulk-stability-NB} give $\|(I-m^2S)^{-1}\|_2\lesssim\langle\im m\rangle^{-2}$. Since $z$ is in the strict bulk, we have $\langle \im m\rangle\sim 1$. 
Hence $\|(I-m^2S)^{-1}\|_2\lesssim 1$.

In the exceptional bipartite case with \(\mathbf a=a\mathbf 1\), a shift of the spectral parameter allows us to assume without loss of generality that \(a=0\). Thus the possible singular point is \(z=-a\), which becomes the origin after the shift.

We first consider a small neighborhood of the origin. By
Proposition~\ref{prop.bipartite-origin-instability}, in the bipartite
exceptional case we have the sharp estimate
\[
    \bigl\|(I-m(z)^2S)^{-1}\bigr\|_2\sim |z|^{-1},
    \qquad z\in N_\epsilon(0).
\]
This gives the desired bound near the only possible singular point.

In the regime \(|z|\ge \epsilon\), we are away from the only possible
bipartite singularity. It follows from \eqref{eq.symmetric-general-bulk-stability} and \eqref{eq.general-bulk-stability} that
$
    \|(I-m^2S)^{-1}\|_2\lesssim\big(|z|\langle\im m(z)\rangle\big)^{-2}\lesssim 1 .
$
This completes the proof.

\appendix

\section{Counterexamples}\label{app.Counterexamples}
In this section, we show that without additional assumptions one cannot expect good bulk stability in the general case.
We consider the Dyson equation
\begin{equation}\label{eq.QVE3}
  -\frac{1}{\mathbf m}=z+S\mathbf m,\qquad \mathbf{m}\in\Cp^d,
\end{equation}
where \(S\) is the cyclic permutation matrix of size \(d\ge 3\),
\[
s_{k,k+1}=1\quad (k=1,\dots,d-1),\qquad s_{d,1}=1,
\]
and all other entries vanish. By cyclic symmetry, it is natural to look for a constant solution of the form
\[
-\frac1{m(z)}=z+m(z),\qquad m\in\Cp.
\]
Hence
\begin{equation}\label{eq.solution-expression}
  m(z)=\frac{-z+\sqrt{z^2-4}}{2},
\end{equation}
where the branch is chosen so that \(\im \sqrt{z^2-4}>0\) for \(\eta>0\). Thus, by uniqueness of the solution, 
$
\mathbf{m}(z)=m(z)\mathbf1
$
is exactly the solution to \eqref{eq.QVE3}. In particular, the associated self-consistent density is the semicircle density, so it is strictly positive on the whole bulk interval \((-2,2)\).

We now look at a special point in the bulk. Let
\[
\omega:=e^{2\pi \mathrm i/d},\qquad \theta:=\frac{2\pi}{d},\qquad z_0:=-2\cos\theta,\qquad m_\eta:=m(z_0+\mathrm i\eta),\qquad R_\eta:=I-m_\eta^2S.
\]
Since \(z_0=-2\cos\theta\in(-2,2)\), we have
$
    z_0^2-4=-4\sin^2\theta .
$
With the branch convention \(\im\sqrt{z^2-4}>0\) for \(\eta>0\), it follows that
\[
    \lim_{\eta\downarrow0}\sqrt{(z_0+\mathrm i\eta)^2-4}
    =
    2\mathrm i\sin\theta .
\]
Therefore, using \eqref{eq.solution-expression}, we get
\[
    \lim_{\eta\downarrow0}m_\eta
    =
    \frac{-z_0+2\mathrm i\sin\theta}{2}
    =
    \frac{2\cos\theta+2\mathrm i\sin\theta}{2}
    =
    e^{\mathrm i\theta}
    =
    \omega .
\]
Introduce the Fourier basis
\[
f_k:=\frac1{\sqrt d}(1,\omega^k,\omega^{2k},\dots,\omega^{(d-1)k})^\top,
\qquad k=0,\dots,d-1.
\]
Then, it follows that
$
Sf_k=\omega^k f_k,
$
so \(R_\eta\) is diagonal in the Fourier basis:
\begin{equation}\label{eq.fourier-decomposition}
  R_\eta f_k=\beta_k(\eta)f_k,\qquad \beta_k(\eta):=1-m_\eta^2\omega^k.
\end{equation}

Now we claim the singular direction is \(f_{d-2}\). 
Write \(m_\eta:=\omega(1+\delta_\eta)\). Since \(m_\eta\) satisfies
$
    m_\eta+1/{m_\eta}=-(z_0+\mathrm i\eta)
$
and \(-z_0=2\cos\theta=\omega+\omega^{-1}\), we get
\[
    \omega(1+\delta_\eta)
    +
    \omega^{-1}(1+\delta_\eta)^{-1}
    =
    \omega+\omega^{-1}-\mathrm i\eta .
\]
Using \((1+\delta_\eta)^{-1}=1-\delta_\eta+O(\delta_\eta^2)\), this gives
$
    (\omega-\omega^{-1})\delta_\eta
    =
    -\mathrm i\eta+O(\delta_\eta^2).
$
As \(\omega-\omega^{-1}=2\mathrm i\sin\theta\), we conclude that
\[
    \delta_\eta
    =
    -\frac{\eta}{2\sin\theta}+O(\eta^2).
\]
Then using \eqref{eq.fourier-decomposition} yields
\[
    \beta_{d-2}(\eta)=1-m_\eta^2\omega^{d-2}=1-(\omega^{-1}m_\eta)^2=1-(1+\delta_\eta)^2=\frac{\eta}{\sin\theta}+O(\eta^2).
\]

For \(k\neq d-2\), \eqref{eq.fourier-decomposition} implies that
\[
\beta_k(\eta)=1-m_\eta^2\omega^k=1-(m_\eta\omega^{-1})^2\omega^{k+2}.
\]
Since \(\lim_{\eta\downarrow 0}m_\eta=\omega\), we have
$
\lim_{\eta\downarrow 0}\beta_k(\eta)= 1-\omega^{k+2}\neq0
$
for $k\neq d-2$
and hence there exists \(c>0\) such that
$
|\beta_k(\eta)|\ge c
$
for all sufficiently small \(\eta>0\). Consequently,
\[
|\beta_{d-2}(\eta)|\sim\eta,
\qquad
|\beta_k(\eta)|\sim1\quad (k\neq d-2),
\]
and thus
$
\|R_\eta^{-1}\|_2\sim \eta^{-1}.
$
So the loss of stability at the bulk point \(z_0\) is genuine, even though it is confined to a single Fourier mode.

\bibliography{Ref-Dyson}
\bibliographystyle{plain}  
\end{document}